\newlist{algorithmSteps}{enumerate}{5}
\setlist[algorithmSteps]{label={\arabic*.}, ref={\arabic*}, nosep}
\newcommand{\myTrig}{\mathcal{T}\!}
\newcommand{\postNorm}{\widehat{\phantom{x}}~}
\newcommand{\posLightCone}{\mathbb{L}_{+}^3}
\newcommand{\emphasisText}[1]{{\bf #1\/}}
\newcommand{\emphasisDef}[1]{{\bf #1\/}}
\newcommand{\from}{\colon}
\newcommand{\hypPlane}[1]{\hypPlaneOp(#1)}
\newcommand{\oxford}{,}
\DeclareMathOperator{\hypPlaneOp}{\perp}
\DeclareMathOperator{\ImPart}{Im}
\DeclareMathOperator{\Id}{Id}
\DeclareMathOperator{\myO}{O}
\DeclareMathOperator{\sgn}{sgn}
\DeclareMathOperator{\Eq}{Eq}
\DeclareMathOperator{\Tiles}{Tiles}
\let\c@algorithm=\c@subsubsection
\title{Tiling Hyperbolic Manifolds:\\ Algorithms and Applications} 
\author{Matthias Goerner}
\email{enischte@gmail.com}
\begin{document}

\begin{abstract}
We introduce a new tiling algorithm for hyperbolic $3$-manifolds. We use it to compute the maximal cusp area matrix; this completely characterizes the space of all embedded and disjoint cusp neighborhoods. As another application of our work, we find the Epstein-Penner decomposition answering a challenge of Sakuma and Weeks.
% This has implications for drilling geodesics, computing the Margulis number for a manifold and solving the isomorphism problem for hyperbolic manifolds. Length spectrum
We furthermore provide the refinements needed to make our algorithm verified: producing intervals provably containing the correct answer. As key ingredient for our work and perhaps of independent interest, we give new and simpler expressions for the distances between points, lines\oxford{} and planes in the hyperboloid model.
\end{abstract}

\maketitle

\dottedcontents{section}[3.2em]{\bfseries}{1.8em}{0.7pc}

\setcounter{tocdepth}{1}
\tableofcontents

\section{Introduction}

Hodgson and Weeks \cite{hwcensus} used a tiling algorithm to compute the length spectrum for hyperbolic 3-manifolds. We revisit their tiling algorithm to make it faster and suitable for verified computations (see Section~\ref{sec:verified}); we also generalize it for a wide range of new applications. These range from finding disjoint and embedded cusp neighborhoods to solving the isomorphism problem for closed hyperbolic 3-manifolds and cusped hyperbolic $n$-manifolds. We list these applications in Section~\ref{sec:applications}. This is followed by a review of the Hodgson--Weeks tiling algorithm in Section~\ref{sec:prevTiling} and a discussion how the new algorithm differs from it in Section~\ref{sec:newTilingIntro}. We give an overview of the paper in Section~\ref{sec:overview}.

\subsection{Applications} \label{sec:applications}

We now list applications of the new tiling algorithm (which itself is described in Section~\ref{sec:tiling}). In this paper, we discuss the first two of these applications (the maximal cusp area matrix and the Epstein-Penner decomposition) in detail. In forthcoming work \cite{ghht:lenSpec,goerner:drilling}, we discuss some of the other applications. We have also implemented many of these applications in SnapPy \cite{SnapPy} with an option to switch to verified computation using interval arithmetic.

\subsubsection{Computing the maximal cusp area matrix} \label{sec:maxCuspArea}

In Section~\ref{sec:compMaxCuspAreaMatrix}, we use the new tiling algorithm to compute a new invariant, the \emphasisText{maximal cusp area matrix}. It completely characterizes the configuration space of disjoint and embedded cusp neighborhoods.
\begin{definition} \label{def:maxCuspAreaMatrix}
Let $M$ be a finite-volume, complete, orientable hyperbolic $3$-manifold with cusps labeled $0,\dots, n-1$. Let $A_{ij}\in\R^+$ be the number such that cusp neighborhoods $C_i$ and $C_j$ with areas $A(C_i)$ and $A(C_j)$ about cusp $i$ and $j$ are embedded (if $i=j$) or disjoint (otherwise) if and only if $
A(C_i) A(C_j) \leq A_{ij}$. We call the symmetric matrix $A_M=(A_{ij})$ the \emphasisDef{maximal cusp area matrix}.
\end{definition}

Figure~\ref{fig:magicMfdPolytope} shows an example. An important application of disjoint and embedded cusp neighborhoods is finding exceptional slopes. Previous work on classifying exceptional slopes includes \cite{threeChainLinkSlopes,fiveChainLinkSlopes,sevenChainLinkSlopes,altKnotSlopes,excepSlopeCensus}.
As we explain in Section~\ref{sec:revisitSix}, the maximal cusp area matrix can be used to check whether slopes fulfill the conditions of the $6$-Theorem \cite{Agol:Six,Lackenby:Six}.

\begin{figure}[ht]
\begin{center}
\begin{minipage}{5cm}
\includegraphics[width=4cm]{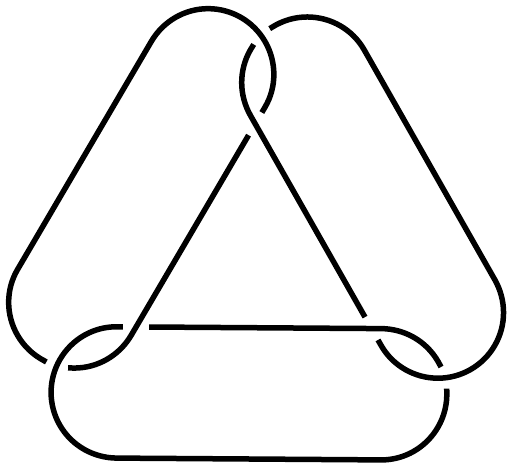}\\
\end{minipage} %
\begin{minipage}{6cm}
\begin{center}
\includegraphics[width=5.6cm]{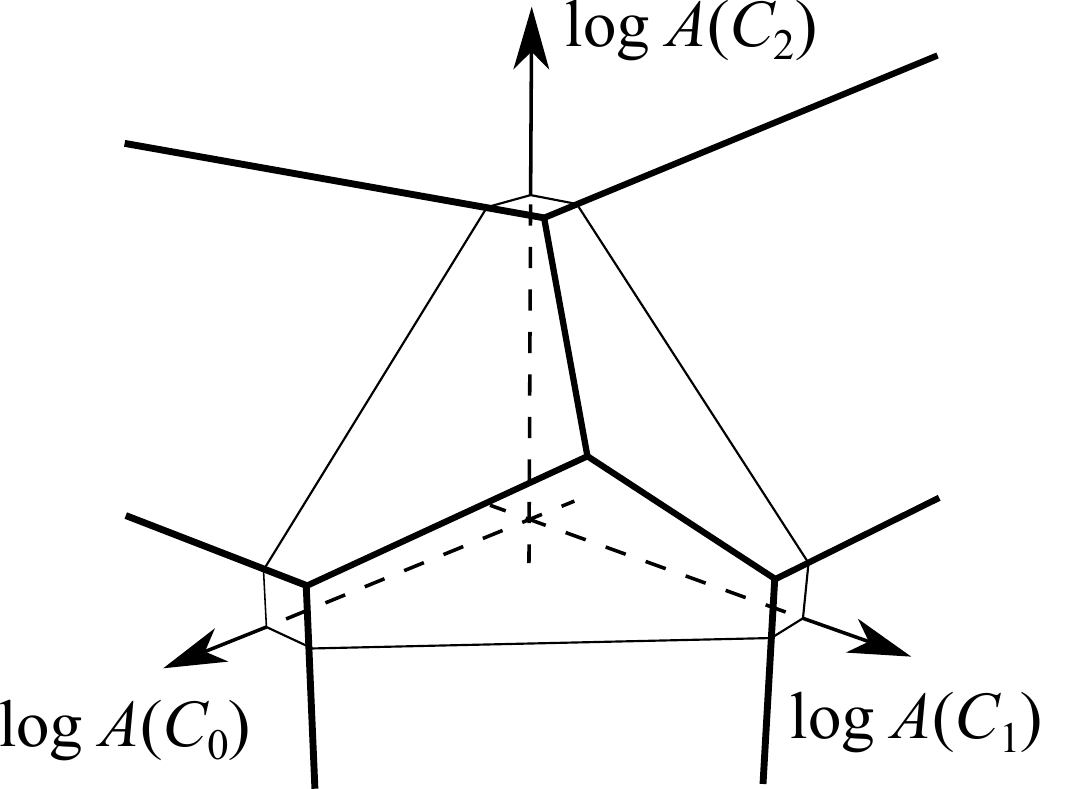}\\
~ \\
$A_M=\left(\begin{array}{rrr}
28 & 7 & 7\\
7 & 28 & 7 \\
7 & 7 & 28\end{array}\right)$~~~~~~~~~
\end{center}
\end{minipage}
\end{center}
\vspace{-0.5cm}
\caption{The 3-chain link $6^3_1$ (``magic manifold''), the corresponding maximal cusp area matrix\oxford{} and the polytope of disjoint and embedded cusp neighborhoods of the link complement. The SnapPy command for $A_M$ is: \texttt{Manifold(\discretionary{}{}{}"6\^{}3\_1").cusp\_area\_matrix()} (requires Version~3.2\protect\footnotemark or later).\label{fig:magicMfdPolytope}}
\end{figure}

\begin{remark}
The maximal cusp area matrix also determines the \emphasisText{maximal cusp volume}: the maximum possible total volume of disjoint and embedded cusp neighborhoods. The ratio of the maximal cusp volume to volume of a hyperbolic $3$-manifold is the \emphasisText{cusp density}; see \cite{adams:cuspDensity} for example. For the complement of $6^3_1$, the maximal cusp volume is realized at each of the four vertices of the polytope in Figure~\ref{fig:magicMfdPolytope}. The polytope for $7^3_1$ has the same combinatorics, but the maximal cusp volume is realized only at the central vertex. Note that computing the polytope, and then evaluating the volume at each vertex, is costly if there are many cusps. We thus pose the following challenge:
\begin{quote}
\emphasisText{Question:} Is there an efficient algorithm for maximal cusp volume?
\end{quote}
\end{remark}

\newpage

Previous techniques to find disjoint and embedded cusp neighborhoods suffer from the following limitations:
\begin{itemize}
\item The SnapPy kernel (which comes from Jeffrey Weeks' original SnapPea) provides a \emphasisText{stopper} for each cusp neighborhood which indicates when the cusp neighborhood would collide with itself or another cusp neighborhood. The cusp neighborhood view shows it as a colored bar limiting the slider for the cusp area. This gives partial information about the cusp area matrix. For example, a green stopper for a red cusp together with the areas of the green and red cusp determines the cusp area matrix entry for those two cusps. To determine the stoppers, the kernel requires the Epstein-Penner decomposition; it finds this using a heuristic process. There are examples (such as \texttt{o9\_17172}) where the kernel fails to recompute the stoppers when changing the cusp areas. Furthermore, the stoppers are not verified.
%\item The \emphasisText{stoppers} provided by the SnapPea kernel are not verified and use a heuristic process which might not always succeed.\\
%Here, the SnapPea kernel refers to the part of SnapPy shared with Jeffrey Weeks' SnapPea. For each cusp neighborhood, it provides a \emphasisText{stopper} indicating when this cusp neighborhood would collide with itself or another cusp neighborhood. This does allow navigating the space of all disjoint and embedded cusp neighborhoods. However, it does not give a complete picture of this space all at once. Furthermore, when the cusp neighborhood areas are changed, the SnapPea kernel has to constantly recompute the associated Epstein-Penner decomposition. This process uses a heuristic method and might not succeed. Overall, this makes maximizing disjoint and embedded cusp neighborhoods through the stoppers a somewhat involved and fragile process.
\item Ichihara and Masai \cite{altKnotSlopes} can verify that cusp neighborhoods are embedded and disjoint in the presence of an additional hypothesis. Namely, that the cusp neighborhoods intersect the given triangulation in standard form; see Figure~\ref{fig:cuspStdForm} here and Definition~\ref{def:stdForm} later. Thus, their choice of neighborhoods depends on the triangulation. Furthermore, their choice is not always maximal.
\end{itemize}

\footnotetext{The method \texttt{Manifold.cusp\_area\_matrix()} already existed in prior SnapPy versions. However, in prior versions and without further arguments, that method did \emphasisText{not} return the maximal cusp area matrix. Instead it returned lower bounds for the maximal cusp area matrix. These lower bounds were computed from cusp neighborhoods in standard form; see Figure~\ref{fig:cuspStdForm} and Definition~\ref{def:stdForm}.}

\begin{figure}[ht]
\begin{center}
\begin{minipage}{5.8cm}
\begin{center}
\includegraphics[scale=0.55]{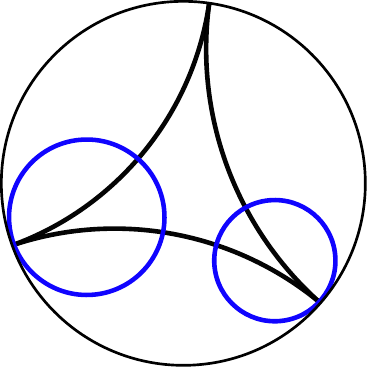}\\
In standard form
\end{center}
\end{minipage} %
\begin{minipage}{5.8cm}
\begin{center}
\includegraphics[scale=0.55]{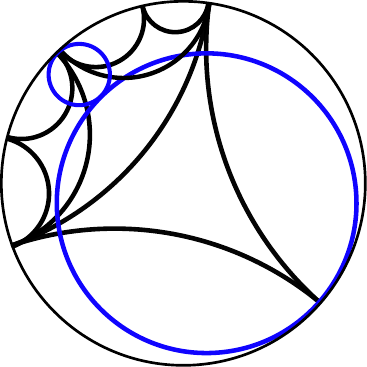}\\
Not in standard form
\end{center}
\end{minipage}
\end{center}
\vspace{-0.5cm}
\caption{Cusp neighborhoods intersecting an ideal triangulation. The picture shows the Poincar\'e disc model. For simplicity, we often show the picture for 2-dimensional instead of 3-dimensional triangulations.\label{fig:cuspStdForm}}
\end{figure}

In Section~\ref{sec:stdFormArgs}, we explain the method of \cite{altKnotSlopes} since we need it as a boot-strapping process; see Remark~\ref{rem:bootstrap}. When the cusp neighborhoods are not in standard form, they can intersect in the manifold even though they do not intersect along the edges. In this case, their method does not work. Thus, for generic cusp neighborhoods, we need to tile to see their intersection; again, see Figure~\ref{fig:cuspStdForm}.

\subsubsection{Solving the isomorphism problem for cusped hyperbolic $n$-manifolds}

In Section~\ref{sec:epAlgorithm}, we use the tiling algorithm to compute the Epstein-Penner decomposition (see \cite{EP}) of an oriented, finite-volume cusped hyperbolic $3$-manifold $M$. As a special case, we compute the canonical cell decomposition: the Epstein-Penner decomposition where all cusp neighborhoods have the same volume. In Section~\ref{sec:higherDim}, we generalize to higher dimensions. Thus, Sections~\ref{sec:epAlgorithm} and~\ref{sec:higherDim} settle a challenge posed by Sakuma and Weeks in \cite[Section~4]{sakumaWeeks:tilt}:
\begin{quote}
Find an algorithm which computes the canonical cell decomposition of any cusped hyperbolic $n$-manifold in a finite number of steps.
\end{quote}

They asked this because Weeks' algorithm to compute the Epstein-Penner decomposition is particular to dimension 3 and is heuristic: it performs $2\rightarrow 3$ and $3\rightarrow 2$ moves and randomizes when it gets stuck; also see \cite[Proposition~5.5]{weeks:canonical}.
%However, while it is not conceptually satisfying, this method works remarkably well for hyperbolic 3-manifolds in practice.

We can turn the canonical cell decomposition into a triangulation by, for example, taking the barycentric subdivision. In particular, we have:
\begin{theorem}
There is an algorithm that, given geometric triangulations of oriented, finite-volume cusped hyperbolic $n$-manifolds $M$ and $M'$ (encoded, for example, by a Gram matrix with exact algebraic expressions for each simplex), decides whether $M$ and $M'$ are homeomorphic.
\end{theorem}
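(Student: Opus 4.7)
The plan is to use the canonical cell decomposition as a complete geometric invariant, combined with Mostow--Prasad rigidity. For $n \geq 3$, two finite-volume cusped hyperbolic $n$-manifolds are homeomorphic if and only if they are isometric, so it suffices to decide isometry. Since the canonical cell decomposition (the Epstein--Penner decomposition with all cusp neighborhoods having equal volume) is uniquely determined by the hyperbolic structure, $M$ and $M'$ are isometric if and only if their canonical decompositions coincide as cell complexes decorated by the shapes of the ideal hyperbolic polytopes.

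First I would run the algorithm of Sections~\ref{sec:epAlgorithm} and~\ref{sec:higherDim} on the input geometric triangulations, producing for each of $M$ and $M'$ a finite collection of ideal hyperbolic polytopes with face pairings and exact algebraic coordinates (for example, via Gram matrices in the Lorentzian model). This terminates and produces canonical output by the results already established in those sections. I would then decide combinatorially whether the two outputs are isomorphic: enumerate the finitely many candidate bijections between cells, check compatibility with the face pairings, and verify that each candidate preserves the geometric decoration. Because all coordinates lie in a fixed algebraic number field, equality testing is decidable by standard algorithms for algebraic numbers, so the whole comparison is an effective finite search. For $n = 2$, where Mostow rigidity fails, cusped hyperbolic surfaces are classified up to homeomorphism by genus and number of cusps, both of which are trivially read off from the input.

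The main obstacle is the first step: computing the canonical decomposition with certified exact arithmetic in arbitrary dimension, which is exactly what the new tiling algorithm together with Sections~\ref{sec:epAlgorithm} and~\ref{sec:higherDim} is designed to accomplish. Once the canonical decomposition is in hand, the remaining combinatorial and algebraic comparison is routine. One minor additional point is that the theorem allows orientation-reversing homeomorphisms, so the combinatorial search should also allow mirror isomorphisms between the two decorated cell complexes.
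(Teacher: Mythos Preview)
Your approach is correct and essentially matches the paper's: compute the canonical Epstein--Penner decomposition via Algorithm~\ref{algorithm:epAlgorithm} (and its higher-dimensional extension in Section~\ref{sec:higherDim}) and then compare. The paper's comparison step is slightly simpler than yours---it canonically triangulates each decomposition (e.g., by barycentric subdivision) and checks purely combinatorial isomorphism of the resulting triangulations, so no geometric decoration needs to be carried along; your explicit invocation of Mostow--Prasad, your separate handling of $n=2$, and your remark on orientation-reversing maps are reasonable elaborations that the paper's terse proof leaves implicit.
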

\begin{proof} Use Algorithm~\ref{algorithm:epAlgorithm} to compute the canonical cell decomposition of $M$ and $M'$. Canonically triangulate each. Check whether the triangulations are combinatorially isomorphic.
\end{proof}

Even better than this, we obtain a complete invariant for cusped hyperbolic $n$-manifolds. This is the isomorphism signature (see \cite{burton:encode} and, for its generalization to higher dimensions, \cite{Regina}) of the barycentric subdivision of the canonical cell decomposition. SnapPy's \emphasisText{isometry signature} chooses a more compact representation by skipping the subdivision if all canonical cells are tetrahedral; if some are not tetrahedral, then it uses a coarsening of the barycentric subdivison (see \cite{SnapPy} or \cite[Definition~3.3]{FGGTV:TetrahedralCensus}).

\subsubsection{Computing the length spectrum} \label{sec:motivationLenSpec}

In \cite{ghht:lenSpec}, we use the new tiling algorithm to accelerate and verify the computation
%\footnote{SnapPy command:\\
%\phantom{BlahBlah}\texttt{Manifold.length\_spectrum\_alt} and  \texttt{Manifold.length\_spectrum\_alt\_gen}}
of the length spectrum. As we explain in Section~\ref{sec:prevTiling}, the Achilles heel of the existing length spectrum algorithm~\cite{hwcensus} is its dependence on the Dirichlet domain. We show how to overcome this weakness in Section~\ref{sec:newTilingIntro}. Doing so makes the new algorithm in \cite{ghht:lenSpec} verified in general and also much faster than \cite{hwcensus} in some cases: for example, 120ms vs 16h 53 to compute the systole of \texttt{o9\_00637}. See \cite{ghht:lenSpec} for details. 

\begin{remark}
In Section~\ref{sec:distStandard}, we also show how to use the new tiling algorithm to compute the embedding size of a geodesic tube or the distance between two closed geodesics. The work can be extended further to compute the ortho-length spectrum defined in \cite{meyerhoff:orthoSpec}. Unlike the length spectrum, the ortho-length spectrum completely determines a closed hyperbolic 3-manifold. However, the resulting invariant consists of algebraic numbers whereas the isometry signature is combinatorial. Thus, we focus on the isometry signature here and in \cite{goerner:drilling}.
\end{remark}

%Recall that Hodgson and Weeks already give an algorithm
%\footnote{SnapPy command: \texttt{Manifold.length\_spectrum}.}
%to compute the length spectrum in \cite{hwcensus}. Their algorithm requires a Dirichlet domain (see Section~\ref{sec:prevTiling}). This makes the existing algorithm very slow for some manifolds (see \cite[Table~{\bf ???}]{ghht:lenSpec}) and not well-suited to verified computations. The new tiling algorithm, introduced here, does not require a Dirichlet domain and, in fact, works with any fundamental polyhedron.

\subsubsection{Drilling simple closed geodesics}

In \cite{goerner:drilling}, we give the first algorithm
%\footnote{SnapPy command: \texttt{Manifold.drill\_word} and \texttt{Manifold.drill\_words}.}
to drill any collection of disjoint simple closed geodesics. It uses the new tiling algorithm to compute a lower bound for the embedding radius of a tube about a geodesic; see Section~\ref{sec:lowerBoundSystem}. The lower bound is needed to safely perturb a geodesic that would otherwise intersect the 1-skeleton.

%If a given geodesic $\gamma$ intersects the 1-skeleton, this algorithm must perturb $\gamma$ without changing its isotopy type. A safe perturbation distance is given by the embedding radius of a tube about $\gamma$. We compute a lower bound for this radius in  using the new tiling algorithm.

%Our work in \cite{goerner:drilling} is the first generic implementation of drilling.
The existing drilling method
%\footnote{SnapPy command: \texttt{Manifold.drill}}
\cite[Section~2]{hwcensus} is restricted to \emphasisText{dual curves}: curves embedded in the dual 1-skeleton of the given triangulation. Furthermore, it does not check that the given dual curve is isotopic to a simple closed geodesic. For example, \texttt{Manifold("m115")\discretionary{}{}{}.drill(11)} drills a curve homotopic but not isotopic to a simple closed geodesic; see \cite{goerner:drilling} for details.

\subsubsection{Solving the isomorphism problem for closed hyperbolic 3-manifolds}

Using the techniques from the previous two sections, \cite{goerner:drilling} extends the isometry signature to closed oriented hyperbolic 3-manifolds. The new invariant of, say, $M$ is obtained from the isometry signatures of the cusped manifolds obtained from the drilling parents of $M$ coming from a canonical subset of the length spectrum.

\subsubsection{Computing the Margulis number for a hyperbolic 3-manifold}

We can also use the new tiling algorithm to determine whether a given number $\varepsilon>0$ is a Margulis number for a (closed or cusped) hyperbolic 3-manifold $M$. Recall that the $\varepsilon$-thin part of $M$ is the union of all essential loops of length less than $\varepsilon$ (or, equivalently, consists of all points with injectivity radius less than $\varepsilon/2$). We call $\varepsilon$ a \emphasisText{Margulis number} for $M$ if the $\varepsilon$-thin part is a union of embedded and disjoint tubes about simple closed geodesics and cusp neighborhoods.

Accordingly, fix $\varepsilon>0$. Suppose that $\gamma$ is a geodesic in $M$. The \emphasisText{$\varepsilon$-thin neighborhood} of $\gamma$ is the tube about $\gamma$ with radius given by \cite[Proposition~3.10]{fps:margulis}. Suppose that $c$ is a cusp of $M$ with shape $s$ (see Section~\ref{sec:peripheralConventions}). The \emphasisText{$\varepsilon$-thin neighborhood} of $c$ is the cusp neighborhood of area $ \ImPart(s) (h/l)^2$. Here $h=2\sinh(\varepsilon/2)$ is the Euclidean distance measured along a horosphere $H$ between two points on $H$ with hyperbolic distance $\varepsilon$; $l=\min_{(p,q)\in\Z^2\setminus 0} |p + qs|$ is the length of the shortest slope in $c$ (when the area is $\ImPart(s)$).

We can determine whether $\varepsilon$ is a Margulis number for $M$ as follows. For each geodesic $\{\gamma_i\}$ up to real length $\varepsilon$ and for each cusp, compute its $\varepsilon$-thin neighborhood. Use the techniques in Section~\ref{sec:embeddingSizes} to check whether these neighborhoods are embedded and disjoint in $M$.

One can now compute the optimal Margulis number $\mu(M)$ using the above procedure and bisection. However, we give a more efficient algorithm in \cite{ghht:lenSpec}.

\subsubsection{Ray-tracing the inside view}

The new tiling algorithm allows us to draw, for the first time, objects such as tubes about geodesics in the inside view of a given hyperbolic 3-manifold; see Remark~\ref{rem:localViewRaytracing}. Figure~\ref{fig:insideView} shows an example. The inside view is also a helpful tool for debugging the implementations of the algorithms discussed here and in forthcoming work.

\begin{figure}[ht]
\begin{center}
\includegraphics[width=9cm]{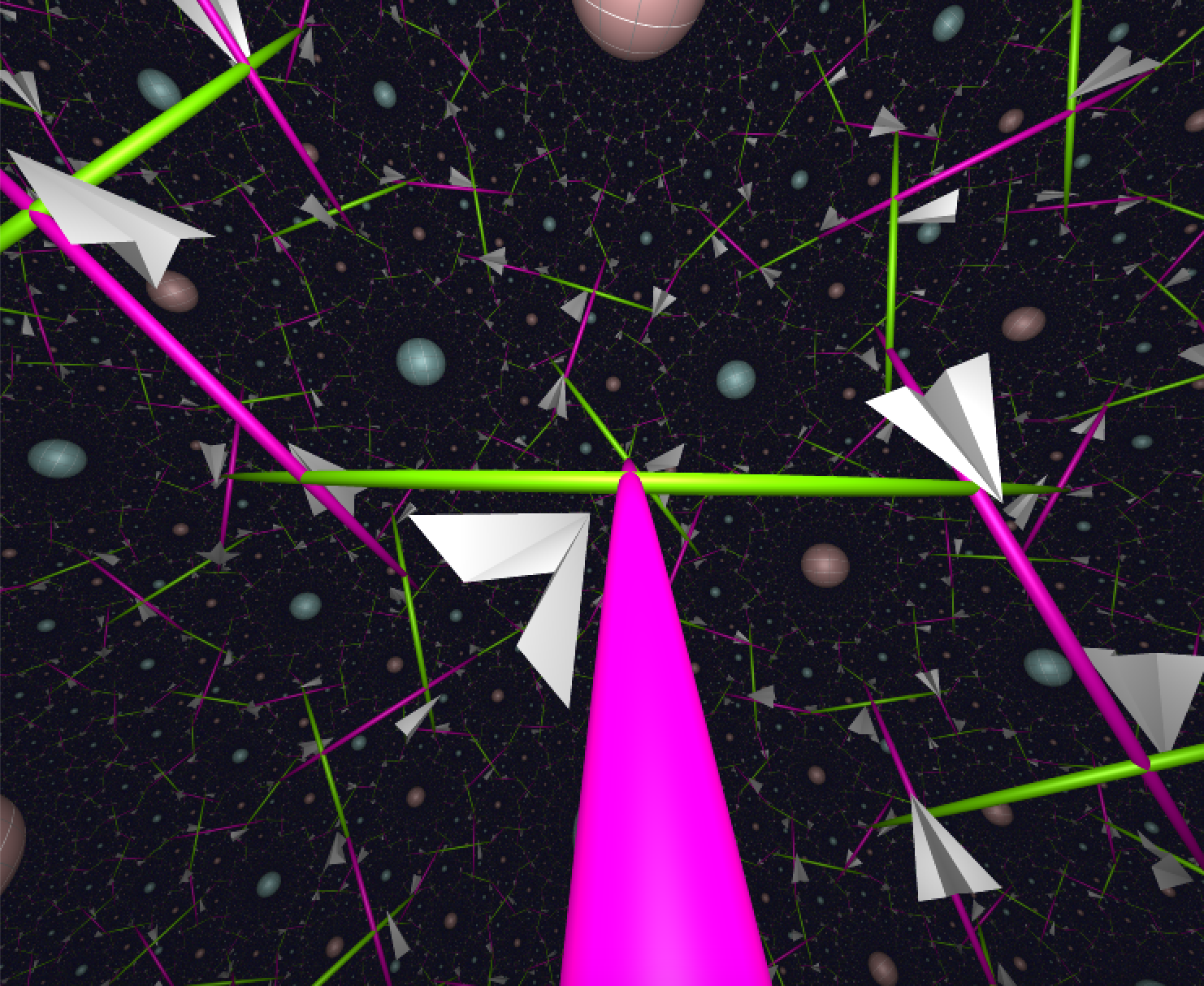}
\end{center}
\caption{Inside view of \texttt{m125} in SnapPy. It shows cusp neighborhoods, the two geodesics of length $1.0612\dots - 2.2370\dots i$\oxford{} and the camera as a paper plane.
The SnapPy command is: \texttt{Manifold("m125")\discretionary{}{}{}.inside\_view(\discretionary{}{}{}geodesics=["Ad","BDaDa"])}.\label{fig:insideView}}
\end{figure}

%\protect\footnotemark
%\footnotetext{SnapPy command: \texttt{Manifold('m125').inside\_view(geodesics=['Ad','BDaDa'])}.}

\subsection{Previous tiling algorithms} \label{sec:prevTiling}

We briefly review the tiling algorithm by Hodgson--Weeks \cite{hwcensus}. %Throughout the paper, we use the hyperboloid model $\H^3$; see Section~\ref{sec:hyperboloidModel}.
Let $P\subset \H^3$ be a fundamental polyhedron for the oriented, finite-volume hyperbolic $3$-manifold $M\cong \Gamma\backslash\H^3$.
% where $\Gamma\subset\SO(1,3)$.
Let $x\in P$ be a base point. Given a tiling radius $r\geq 0$, the goal is to find all translated tiles $mP$ whose translated basepoint $mx$ satisfies $d(x,mx)\leq r$. This can be used to compute, for example, the length spectrum up to a given cut-off length (which is related to $r$ by \cite[Proposition~3.5]{hwcensus}).

\begin{figure}[ht]
\begin{center}
%% Creator: Inkscape 1.2.2 (b0a84865, 2022-12-01), www.inkscape.org
%% PDF/EPS/PS + LaTeX output extension by Johan Engelen, 2010
%% Accompanies image file 'tilingFund2.pdf' (pdf, eps, ps)
%%
%% To include the image in your LaTeX document, write
%%   \input{<filename>.pdf_tex}
%%  instead of
%%   \includegraphics{<filename>.pdf}
%% To scale the image, write
%%   \def\svgwidth{<desired width>}
%%   \input{<filename>.pdf_tex}
%%  instead of
%%   \includegraphics[width=<desired width>]{<filename>.pdf}
%%
%% Images with a different path to the parent latex file can
%% be accessed with the `import' package (which may need to be
%% installed) using
%%   \usepackage{import}
%% in the preamble, and then including the image with
%%   \import{<path to file>}{<filename>.pdf_tex}
%% Alternatively, one can specify
%%   \graphicspath{{<path to file>/}}
%% 
%% For more information, please see info/svg-inkscape on CTAN:
%%   http://tug.ctan.org/tex-archive/info/svg-inkscape
%%
\begingroup%
  \makeatletter%
  \providecommand\color[2][]{%
    \errmessage{(Inkscape) Color is used for the text in Inkscape, but the package 'color.sty' is not loaded}%
    \renewcommand\color[2][]{}%
  }%
  \providecommand\transparent[1]{%
    \errmessage{(Inkscape) Transparency is used (non-zero) for the text in Inkscape, but the package 'transparent.sty' is not loaded}%
    \renewcommand\transparent[1]{}%
  }%
  \providecommand\rotatebox[2]{#2}%
  \newcommand*\fsize{\dimexpr\f@size pt\relax}%
  \newcommand*\lineheight[1]{\fontsize{\fsize}{#1\fsize}\selectfont}%
  \ifx\svgwidth\undefined%
    \setlength{\unitlength}{381.22800812bp}%
    \ifx\svgscale\undefined%
      \relax%
    \else%
      \setlength{\unitlength}{\unitlength * \real{\svgscale}}%
    \fi%
  \else%
    \setlength{\unitlength}{\svgwidth}%
  \fi%
  \global\let\svgwidth\undefined%
  \global\let\svgscale\undefined%
  \makeatother%
  \begin{picture}(1,0.35769149)%
    \lineheight{1}%
    \setlength\tabcolsep{0pt}%
    \put(0,0){\includegraphics[width=\unitlength,page=1]{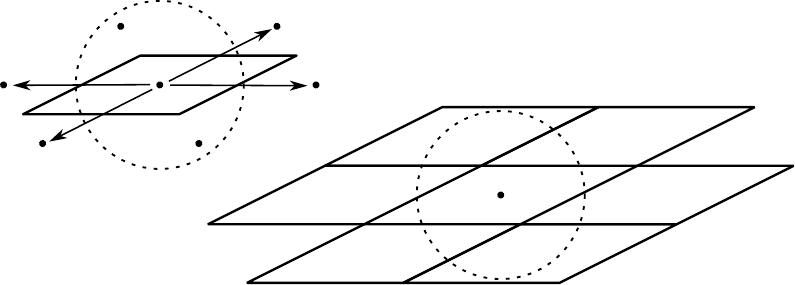}}%
    \put(0.62335199,0.11953216){\makebox(0,0)[lt]{\lineheight{1.25}\smash{\begin{tabular}[t]{l}$x$\end{tabular}}}}%
    \put(0.72546752,0.04381576){\makebox(0,0)[lt]{\lineheight{1.25}\smash{\begin{tabular}[t]{l}$r$\end{tabular}}}}%
    \put(0.1937611,0.25795795){\makebox(0,0)[lt]{\lineheight{1.25}\smash{\begin{tabular}[t]{l}$x$\end{tabular}}}}%
    \put(0.29615545,0.18244135){\makebox(0,0)[lt]{\lineheight{1.25}\smash{\begin{tabular}[t]{l}$r$\end{tabular}}}}%
    \put(0.00508223,0.11425977){\makebox(0,0)[lt]{\lineheight{1.25}\smash{\begin{tabular}[t]{l}Hodgson-Weeks\end{tabular}}}}%
    \put(0.55799624,0.23777083){\makebox(0,0)[lt]{\lineheight{1.25}\smash{\begin{tabular}[t]{l}New tiling algorithm\end{tabular}}}}%
  \end{picture}%
\endgroup%

\end{center}
\caption{An example of tiling with the fundamental polyhedron $P$ of a Euclidean torus. The Hodgson-Weeks recursively applies the face-pairing matrices and stops when $d(x,mx)>r$. In this case, it stops immediately and, thus, produces only $\Id P(=P)$ and misses the two non-trivial translates with $d(x,mx)\leq r$. The new algorithm stops when $d(x,mP)>r$. It produces all translates with $d(x,mP)\leq r$.\label{fig:tilingFund}}
\end{figure}

The Hodgson--Weeks algorithm starts with the single tile $\Id P(=P)$ and applies the face-pairing matrices recursively. It stops recursing when $d(x,mx)>r$. If $P$ is a Dirichlet domain about the base point $x$, the algorithm correctly finds all translates with $d(x,mx)\leq r$. However, if $P$ is not a Dirichlet domain, the algorithm can miss translates; see Figure~\ref{fig:tilingFund}.

\newpage

Using a Dirichlet domain $D$ can cause problems. For example:
\begin{itemize}
\item When $D$ has many faces, it maybe difficult to find $D$.
\item When $D$ has large spine radius, the length spectrum becomes very expensive (see \cite{ghht:lenSpec} for examples).
\item When $D$ has finite vertices of valence 4 or higher, it is impossible to verify $D$ with interval arithmetic.
\end{itemize}
In particular, we have the following conjecture.
\begin{conjecture}
Every Dirichlet domain of the census manifolds \texttt{m125} and \texttt{m129} has a finite vertex of valence four.
\end{conjecture}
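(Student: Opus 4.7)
Since $\Gamma$ is torsion-free for both \texttt{m125} and \texttt{m129}, a valence-four finite vertex of $D(x)$ can only arise from a \emph{coincidence}: four translates $x, g_1 x, g_2 x, g_3 x$ must be equidistant from a common point $v \in \mathbb{H}^3$, with every other translate of $x$ strictly farther from $v$. Since concyclicity of four points in $\mathbb{H}^3$ is a codimension-one condition, the conjecture asserts that this condition is forced to hold for every basepoint. The plan is to locate the structural source of this coincidence in the (nontrivial) symmetry group of the manifold and then to verify the resulting claim by computer-assisted tiling.

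First I would use the verified tiling algorithm of Section~\ref{sec:tiling} to compute, for a dense sample of basepoints $x \in M$, the complete list of face-pairings of $D(x)$ and the combinatorial type of $D(x)$. From these computations one identifies which quadruple $(g_1,g_2,g_3,g_4) \in \Gamma^4$ meets at the valence-four vertex and checks whether the same quadruple persists across all basepoints or is replaced on different strata. Second, one looks for an element $\sigma$ in the normalizer $N(\Gamma) < \mathrm{Isom}(\mathbb{H}^3)$ (that is, a lift of a nontrivial isometry of $M$) of order $\geq 4$ such that the orbit $\{\sigma^i x\}_{i=0}^{3}$ sits inside a single $\Gamma$-orbit for all $x$. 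The existence of such $\sigma$ would supply the algebraic identity among face-pairing matrices that forces four translates of any $x$ to lie on a common hypersphere; the three nontrivial elements $g_i = \sigma^i \bmod \Gamma$ would be the valence-four candidates.

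The main obstacle I foresee is closing the argument over the entire compact parameter space $M / \mathrm{Isom}(M)$: even granting a candidate valence-four vertex for every $x$, one must show that it is never \emph{shielded} by a closer fifth translate that would push it outside $D(x)$ entirely. This requires quantitative distance comparisons stratum by stratum, bounding $d(v, x)$ from above in terms of the systole and cusp geometry of $M$ and bounding $d(v, gx)$ from below for every non-participating $g \in \Gamma$ using the tiling radius from Step~1. I expect the conjecture, if true, to admit a computer-assisted proof rather than a closed-form argument, and converting finite-sample experimental evidence into a rigorous statement over the continuous basepoint moduli is the chief difficulty. The verified tiling machinery developed in this paper is precisely the tool needed to carry out such an analysis.
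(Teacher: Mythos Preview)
The statement you are attempting to prove is labeled a \emph{conjecture} in the paper, and the paper offers no proof of it. It appears only as motivation for why one should avoid relying on Dirichlet domains: the authors list it among the obstructions to verified computation and move on. There is therefore nothing in the paper to compare your proposal against.

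Your write-up is not a proof but a research outline, and you say as much in the final paragraph. The two substantive gaps you identify yourself are real and unresolved: (i) the symmetry argument is speculative --- you have not exhibited an order-$\geq 4$ element $\sigma$ of the normalizer whose orbit on \emph{every} basepoint lies in a single $\Gamma$-orbit (and for \texttt{m125} and \texttt{m129} the isometry groups are small, so this step may simply fail as stated); and (ii) even granting a forced coincidence, the ``shielding'' problem --- ruling out a closer fifth translate for every basepoint --- requires uniform estimates over a non-compact parameter space (these are cusped manifolds), which a finite sample cannot deliver. Until both of these are addressed, the proposal remains a plausible strategy rather than a proof, which is consistent with the paper's treatment of the statement as open.
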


\subsection{The new tiling algorithm} \label{sec:newTilingIntro}

As in the previous section, for our new tiling algorithm, we replace $d(x,mx)$ by $d(x,mP)$. Doing so leads to the following new features:
\begin{itemize}
\item We can work with any fundamental polyhedron $P$, not just a Dirichlet domain $D$; again see Figure~\ref{fig:tilingFund}.
\item Given a tiling radius $r\geq 0$, the algorithm produces tiles $mP$ covering the closed ball $\overline{B}_r(x)\subset \H^3$, not just the set $\overline{B}_r(x)\cap \Gamma x$. This is useful for applications such as determining whether $B_r(x)$ embeds into $M$. See \cite{ghht:lenSpec} for implications to the length spectrum computation.
\item In particular, we can use tetrahedral tiles. That is, we tile $\H^3$ with translates $mT_t$ of tetrahedra rather than translates $mP$ of the fundamental polyhedron $P=\cup T_t$. Again see \cite{ghht:lenSpec} for implications to the length spectrum computation.
\end{itemize}

\newpage

Other new features include the following:
\begin{itemize}
\item In addition to tiling about a point, we can now tile about a closed geodesic or cusp neighborhood. More precisely, let $K\subset\H^3$ be a lift of the object to be tiled. The tiles cover a closed neighborhood $\overline{B}_r(K)$ in the quotient space $\Gamma_K\backslash\H^3$ where $\Gamma_K\subset\Gamma$ is the stabilizer of $K$; see Section~\ref{sec:tilingGeneralCase}.
\item The algorithm produces a stream of tiles and tiling radii rather than taking a tiling radius as input. That is, each tile $t$ in the stream comes with a lower bound on the radius of the neighborhood of $K$ covered by the tiles up to $t$. In other words, we can think of the algorithm as a generator that can be queried for the next pair consisting of a tiling radius and a tile.
\item The algorithm supports verified computation using intervals; see Sections~\ref{sec:verified} and~\ref{sec:verifiedTiling}. In particular, the input itself (a fundamental polyhedron) can be verified with interval arithmetic; see Section~\ref{sec:geomTrig}.
\end{itemize}

\subsection{Overview} \label{sec:overview}

We review the semantics of verified computations in Section~\ref{sec:verified}, expressions for distances in hyperbolic space in Section~\ref{sec:hypSpace}\oxford{} and geometric structures on triangulations and developing a fundamental polyhedron $P$ in Section~\ref{sec:geomTrig}.

In Section~\ref{sec:algoInput}, we discuss how to represent the object to tile about so that it can serve as input to the tiling algorithm.

In Section~\ref{sec:graphTrace}, we introduce the graph tracing algorithm to seed the tiling algorithm.

At the heart of the paper is the tiling algorithm in Section~\ref{sec:tiling}.

It is followed by two sections on implementation details: dictionaries suitable for verified computations are introduced in Section~\ref{sec:verifiedDicts} and specialized to detect duplicate tiles in Section~\ref{sec:setLiftedTets}.

As an application of the tiling algorithm, Section~\ref{sec:embeddingSizes} focuses on the embedding size of an object or the distance between two objects in a hyperbolic 3-manifold $M$. In particular, Section~\ref{sec:compMaxCuspAreaMatrix} applies this to cusp neighborhoods to compute the maximal cusp area matrix.

As applications of the maximal cusp area matrix, we show how to find embedded and disjoint cusp neighborhoods in Section~\ref{sec:revisitSix}. In this section, we also review the $6$-Theorem to find exceptional slopes.

As another application of the tiling algorithm, we propose an algorithm to compute the Epstein-Penner decomposition in Section~\ref{sec:epAlgorithm}.

We conclude by generalizing to higher dimensions in Section~\ref{sec:higherDim}.

Appendix~\ref{App:Notation} lists the notation used throughout this paper.

\subsection{Acknowledgements}

The author thanks Neil R. Hoffman, Robert C. Haraway III and Maria Trnkova for starting the project to compute the verified length spectrum and Saul Schleimer for his detailed feedback on this paper. The author also thanks David Futer, Henry Segerman\oxford{} and Christian Zickert for many helpful discussions. The author is grateful to Marc Culler and Nathan Dunfield for our continuing collaboration on SnapPy.

\section{Verified computations} \label{sec:verified}

\subsection{Semantics of verified computations}

With the exception of Section~\ref{sec:epAlgorithm}, all algorithms in this paper can be made \emphasisText{verified} by using interval arithmetic. In general, we call a computation \emphasisText{verified} if it either returns intervals containing the true values or explicitly fails to indicate insufficient precision. By increasing precision, we can make the computation succeed eventually and return arbitrary tight intervals. For example, a verified computation of the maximal cusp area matrix $A_M$ gives an interval for each matrix entry $A_{ij}$ containing the true value of $A_{ij}$.  Analogously, a verified computations of a geometric structure on an ideal triangulation $\myTrig$ gives intervals for the shapes $z_0, \dots, z_{n-1}$ containing the unique solution to the logarithmic gluing equations (see Section~\ref{sec:geometricStruct} for further discussion). For finite triangulations, we can give a hyperbolic structure by assigning lengths to the edges subject to a system of equations and inequalities. This system does not have a unique solution and a verified computation gives intervals containing some solution. Recall that the output of the tiling algorithm is a stream of tiles. Thus, the semantics of its verified computation is slightly more complicated and explained in detail in Section~\ref{sec:verifiedTiling}.

Typically, computations for hyperbolic manifolds start with shapes or edge lengths. These are given implicitly as solution to a system of equations. Thus, finding verified intervals for shapes or edge lengths requires methods such as the interval Newton method or Krawczyk test. Applying these techniques to hyperbolic manifolds was pioneered by \cite{HIKMOT} and extended to finite triangulations by the author \cite{matthiasVerifyingFinite}.

In many subsequent computations (including the ones in this paper), new quantities are given explicitly in terms of already computed quantities. We assume that the interval implementation of each operator fulfills the inclusion principle:
\begin{definition} \label{def:inclusionPrinciple}
Let $f\from U\to V$ be a function with $U\subset \R^n$ and $V\subset \R^m$. The interval version $F$ of $f$ maps $n$ floating-point intervals to $m$ floating-point intervals. It fulfills the \emphasisDef{inclusion principle} if for every tuple $X$ of floating-point intervals contained in $U$ and every $x\in X$, we have $f(x)\in F(X)$.
\end{definition}
Then, inductively and starting with the shapes or edge lengths, all computed intervals contain the true values (obtained when using exact arithmetic) of the respective quantity.

\subsection{Interval arithmetic conventions} \label{sec:intConv}

We write most algorithms in this paper so that we can switch between unverified and verified computations by just switching the real number type between floating-point numbers and intervals. For verified computations, we can set $\varepsilon$'s to zero, for example, in Algorithm~\ref{algorithm:graphTracingVerificationStep}. We do not give an unverified implementation of some data structures such as the set $S$ in the tiling algorithm (Algorithm~\ref{algorithm:tilingAny}).

We use $\underline{a}$ and $\overline{a}$ to denote the left and right endpoint of an interval $a=[\underline{a},\overline{a}]$. When switching to unverified computations, $\underline{a}$ and $\overline{a}$ simply denote $a$ itself.

We assume that the interval operator $[\underline{a},\overline{a}] < [\underline{b},\overline{b}]$ is true if $\overline{a} < \underline{b}$ and false otherwise (and analogously for other inequality operators)\footnote{This matches the ``<'' operator of SageMath's \texttt{RealIntervalField} \cite{SageMath}.}. In other words, $a<b$ should be read as ``the interval estimates prove that $a<b$''. Note that if $a<b$ is false for the interval estimates, it does not prove that $a\geq b$. So an algorithm also needs to explicitly also check for $a\geq b$ if it relies on it (see $\Eq_b$ in Section~\ref{sec:equalityPred} for an example which we use in Section~\ref{sec:gammaDict} to solve the word problem for hyperbolic manifolds). This is not the only possible implementation of the interval comparison operator. The conservative choice is making the operator explicitly fail unless we can decide whether $a<b$ is true or false, that is either $\overline{a} < \underline{b}$ or $\underline{a} \geq \overline{b}$.

For convenience, when computing a lower bound $l$ for, say, $x$, we also use an interval $L$. In that case, $\underline{L}$ is a lower bound for (the true value of) $x$ and $\overline{L}$ is meaningless. An example is the lower bound $r_k$ for the radius of the neighborhood covered by the tiles computed by Algorithm~\ref{algorithm:tilingAny}. Upper bounds work analogously. This avoids using two different types for real numbers and, thus, comparisons between floating point numbers and intervals (which some interval arithmetic implementations such as the one in SageMath \cite{SageMath} disallow).

%\newpage

\section{Hyperbolic space} \label{sec:hypSpace}

\subsection{Hyperboloid model} \label{sec:hyperboloidModel}

We use the hyperboloid model of hyperbolic $3$-space. An introduction can be found in \cite{PurcellKnotTheory,ratcliffe:hyp}. We use the following notation:
\begin{itemize}
\item $x\cdot x'$ is the inner product of signature $(-,+,+,+)$.
\item $\H^3$ is the hyperboloid model $\left\{ x \in \R^{1,3} : x_0>0~\mbox{and}~x\cdot x = -1\right\}$. 
\item $\posLightCone$ is the future light-cone $\left\{x \in \R^{1,3} : x_0>0~\mbox{and}~x\cdot x=0\right\}$. A ray in $\posLightCone$ corresponds to an ideal point of $\H^3$.
\item $B(l)$ is the horoball $\left\{x\in\H^3 : x\cdot l > -1\right\}$ for a light-like $l\in\posLightCone$.
\item $\widehat{x}$ or $(x)\postNorm$ is the projection $x/\sqrt{-x\cdot x}$ of a time-like $x\in\R^{1,3}$ onto $\H^3$.
\item $\hypPlane{n}$ is the plane $n^\perp\cap\H^3=\left\{x\in\H^3 : x\cdot n = 0\right\}$ defined by a space-like normal vector $n\in\R^{1,3}$ with $n\cdot n = 1$.
\item $x_0x_1$ is the line given by $\gamma(t)=((1-t)x_0 + tx_1))\postNorm$ for $x_0, x_1\in\posLightCone$.\end{itemize}

\subsection{Distances} \label{sec:hypDistances}

Using the techniques described later in Section~\ref{sec:distanceProofs}, we obtain the following expressions for the (unsigned) distances between points $x$ and $x'$, lines $x_0x_1$ and $x'_0x_1'$ and planes $\hypPlane{n}$ and $\hypPlane{n'}$:
\allowdisplaybreaks
\begin{align*}
d(x, x')&=\cosh^{-1}\left(-x\cdot x'\right) = \cosh^{-1}\left( \frac{(x'-x)\cdot (x'-x)}{2}+1\right)\\
d(x, x_0x_1) &= \cosh^{-1}\sqrt{-2\frac{(x_0\cdot x)(x\cdot x_1)}{x_0\cdot x_1}}\\
\displaybreak\\
d(x_0x_1,x'_0x'_1)&=2 \sinh^{-1}\sqrt{ \frac{ \lambda - 1}{2}}\\
&\text{where}\quad\lambda = \sqrt{\frac{(x_0\cdot x'_0)(x_1\cdot x'_1)}{(x_0\cdot x_1)(x'_0\cdot x'_1)}} + \sqrt{\frac{(x_0\cdot x'_1)(x_1\cdot x'_0)}{(x_0\cdot x_1)(x'_0\cdot x'_1)}}\\
d(x, \hypPlane{n}) &= \left|\sinh^{-1} x\cdot n\right| \\
d(x_0x_1,\hypPlane{n})&=\begin{cases}\displaystyle\sinh^{-1}\sqrt{-2 \frac{(x_0\cdot n)(n\cdot x_1)}{x_0\cdot x_1}} & \text{if}\quad (x_0\cdot n)(n\cdot x_1)> 0 \\ 0 & \text{otherwise}\end{cases}\\
d(\hypPlane{n}, \hypPlane{n'})&=\begin{cases}\displaystyle\cosh^{-1} \left| n\cdot n'\right| & \text{if}\quad \left|n\cdot n'\right | \geq 1\\ 0 & \text{otherwise}\end{cases}\\
\intertext{For the distance of a point $x$ to a horoball $B(l)$, we use the signed distance that is negative when $x$ is inside the horoball:}
d(x, B(l)) &= d_\text{unsigned}(x, B(l)) - d_\text{unsigned}(x, \H^3 \setminus B(l))\\
& = \log(-x\cdot l)
\intertext{In general, we define the distance to a horoball by $d(K, B(l)) = \inf_{x\in K} d(x, B(l))$ using extended reals. In particular, $d(B(l),B(l))$ is well-defined and $-\infty$. We have:}
d(x_0x_1, B(l)) & = \log \sqrt{-2\frac{(x_0\cdot l)(l\cdot x_1)}{x_0\cdot x_1}} \\
d(\hypPlane{n}, B(l)) & = \log \left|n\cdot l\right|\\
d(B(l), B(l')) & = \log\left(-\frac{l\cdot l'}{2}\right)
\intertext{The signed distance of the projections of points $x, x'$ onto the line $x_0x_1$ is given as follows. Note that the distance is positive if the projection of $x'$ is closer to $x_1$ than the projection of $x$:}
d_{x_0x_1}(x, x')&=\log\sqrt{\frac{(x\cdot x_1)(x'\cdot x_0)}{(x\cdot x_0)(x'\cdot x_1)}}
\intertext{For the midpoint of the line, this simplifies to:}
d_{x_0x_1}((x_0+x_1)\postNorm, x')&=\log\sqrt{\frac{x'\cdot x_0}{x'\cdot x_1}}
\intertext{The (Euclidean) length of the line segment $x_0x_1$ projected onto $\partial B(l)$ is given by}
d_{\partial B(l)}(x_0, x_1)&= \sqrt{-2\frac{x_0\cdot x_1}{(x_0\cdot l)(l\cdot x_1)}}
\end{align*}

\begin{remark}
The above expressions also apply to $\H^n$.
\end{remark}

Let $T$ be an ideal triangle in the plane $\hypPlane{n}$ spanned by $v_0, v_1, v_2\in\posLightCone$. For $\{i,j,k\}=\{0,1,2\}$, let
\[
m_i = \left(- (v_i\cdot v_k) v_j - (v_i\cdot v_j) v_k\right)\postNorm
\]
be the point where the incircle of $T$ touches the line $v_jv_k$ and let $n_i$ be the normalized projection of $v_i-m_i$ onto the tangent space of $\H^3$ at $m_i$. Then the planes $\hypPlane{n_i}$ in $\H^3$ are orthogonal to $\hypPlane{n}$ and bound $T$ so that we can compute the distances to $T$ as follows:
\begin{align*}
d(x, T)&=\begin{cases}d(x, v_iv_j) & \text{if}\quad x\cdot n_k < 0~\text{where}~\{i,j,k\}=\{0,1,2\}\\ d(x, \hypPlane{n}) & \text{otherwise}\end{cases}\\
d(B(l), T)& =  \begin{cases}d(B(l), v_iv_j) & \text{if}\quad l\cdot n_k < 0~\text{where}~\{i,j,k\}=\{0,1,2\}\\ d(B(l), \hypPlane{n}) & \text{otherwise}\end{cases}\\
d(x_0x_1, T)&=\begin{cases}d(x_0x_1, v_iv_j) & \text{if}\quad p\cdot n_k< 0~\text{where}~\{i,j,k\}=\{0,1,2\}\\ d(x_0x_1,\hypPlane{n}) & \text{otherwise}\end{cases}\\
& \text{where $p=(\left|x_1\cdot n\right|x_0+\left|x_0\cdot n\right|x_1)\postNorm$ is the point on $x_0x_1$ closest to $\hypPlane{n}$} \\
%& \text{such that $\hat{p}$ is the point on $x_0x_1$ closest to $\hypPlane{n}$}.\\
\end{align*}

\subsection{Derivations of distances} \label{sec:distanceProofs}

We now sketch out how to derive the above expressions.

Recall that $r_d=(\cosh d, \sinh d, 0, 0)$ parametrizes a line in $\H^3$ by length.

For $d(x,x')$: Without loss of generality, assume $x=r_0=(1,0,0,0)$ and $x'=r_d$. Then $d(x,x')=d$ and $x\cdot x' = -\cosh d$, so we get the above expression.

For $d(x, \hypPlane{n})$: Without loss of generality, assume $x=r_d$ and $n=(0,1,0,0)$.
 
 For $d(x,x_0x_1)$: Fix $x_0$ and $x_1$. Knowing $d(x, B(x_0))$ narrows down the possible locations of $x$ to a horosphere. Knowing both $d(x, B(x_0))$ and $d(x, B(x_1))$ narrows down $x$ to the intersection of two horosphere which is a circle concentric about $x_0x_1$ with radius $d(x,x_0x_1)$. If we also know $d(B(x_0), B(x_1))$, we can compute this radius. In other words, $d(x,x_0x_1)$ is a function of $x\cdot x_0$, $x\cdot x_1$ and $x_0\cdot x_1$. Being invariant under scaling $x_0$ or $x_1$, $d(x, x_0x_1)$ must be a function of $(x_0\cdot x)(x\cdot x_1)/(x_0\cdot x_1)$. Without loss of generality, assume  $x=r_d$ and $x_0, x_1=(1,0,\pm 1,0)$. The point $p$ on $x_0x_1$ closest to $x$ is $(1,0,0,0)$ and $d(x,p)=d$.

For $d(x_0x_1, x'_0x'_1)$: We can apply an $\SO(1,3)$ transformation such that
\begin{eqnarray*}
x_0 & = & (\cosh d/2, +\sin\alpha,+\cos\alpha,+\sinh d/2) \lambda_0\\
x_1 & = & (\cosh d/2, -\sin\alpha, -\cos\alpha,+\sinh d/2) \lambda_1\\
x'_0 & = & (\cosh d/2, +\sin\alpha,-\cos\alpha,-\sinh d/2) \lambda'_0\\
x'_1 & = & (\cosh d/2, -\sin\alpha, +\cos\alpha,-\sinh d/2) \lambda'_1
\end{eqnarray*}
where $d$ is the distance between the lines. We can compute the inner products of the four points and solve for $d$.

For $d(x_0x_1,\hypPlane{n})$: If $x_0$ and $x_1$ are on two different sides of the plane $\hypPlane{n}$, then the distance is zero. Otherwise, assume $x_0, x_1 = (\cosh d, \sinh d, \pm 1, 0)$ and $n=(0,1,0,0)$. The points on the line and plane closest to each other are $r_d$ and $r_0$ and $d(r_d,r_0)=d$. 

For $d(\hypPlane{n},\hypPlane{n'})$: If $\hypPlane{n}$ and $\hypPlane{n'}$ intersect in $\H^3$, assume their intersection contains the origin $(1,0,0,0)$ such that $n$ and $n'$ are in the Euclidean subspace $\R^3$ of $\R^{1,3}$. Otherwise, consider $n=(0,1,0,0)$ and $n'=(\sinh d, \cosh d, 0, 0)$. The points on the two planes closest to each other are $r_0$ and $r_d$.

\subsection{Verified computation} \label{sec:verifiedDistances}

The expressions in Section~\ref{sec:hypDistances} must be modified to be robust when using intervals. For example, the interval estimate for $-x\cdot x'$ can contain values $v<1$ even though we know that the true value is $\geq 1$. In particular, this can happen if $x$ and $x'$ are interval estimates for the same point. Since $\cosh^{-1}(v)$ is not defined in this case, we need to modify the expression for $d(x,x')$ as follows:
\[
d(x, x')=\cosh^{-1}\left(-x\cdot x' \cap [1, \infty) \right)
\]
Another example is $d(\hypPlane{n}, \hypPlane{n'})$:
\[
d(\hypPlane{n}, \hypPlane{n'})=\cosh^{-1}\left(\max(1, |n\cdot n'|)\right)
\]
To ensure a non-trivial lower bound for $d(x,x_0x_1)$ in the degenerate case where $x_0$ and $x_1$ coincide and have overlapping intervals, we use:
\[
d(x, x_0x_1) = \cosh^{-1}\sqrt{[1,\infty)\cap\frac{2(x_0\cdot x)(x\cdot x_1)}{[0,\infty)\cap -x_0\cdot x_1}}
\]

Consider evaluating $d(x, T)$ from interval estimates. Assume that the interval estimates are not good enough to either prove $x\cdot n_k< 0$ for one $k$ or prove $x\cdot n_k \geq 0$ for all $k$. Then we need to take the smallest interval containing the interval for $d(x, \hypPlane{n})$ and the interval for each $d(x,v_iv_j)$ where the interval estimates could not prove $x\cdot n_k \geq 0$. However, if we are only interested in a lower bound for $d(x, T)$ (for example, in Algorithm~\ref{algorithm:tilingAny}), we can simply return $d(x, v_iv_j)$ if the interval estimates could prove $x\cdot n_k\leq 0$ for any k and $d(x, \hypPlane{n})$ otherwise.

During interval computations, it is often better to avoid or postpone normalization. In particular, we do not assume the ideal endpoints of a line $x_0x_1$ are scaled to achieve $x_0\cdot x_1 = -1$.

\section{Geometric triangulations} \label{sec:geomTrig}

\subsection{Triangulations} \label{sec:triangulations}

Let $\myTrig$ be an oriented, 3-dimensional triangulation with tetrahedra $T_0,\dots, T_{n-1}$. We call a vertex \emphasisText{finite} if its link is a 2-sphere. Otherwise, we call the vertex \emphasisText{ideal}. We denote by $M$ the manifold obtained by removing the ideal vertices from the geometric realization of $\myTrig$. Note that $M$ is closed if all vertices are finite. Otherwise, $M$ has cusps corresponding to the ideal vertices.

Unless stated otherwise, we use $t$ and $t'$ to index tetrahedra $T_t$ and $T_{t'}$. We use $v$ and $v'$ and $f$ and $f'$ to index the vertices and faces of a tetrahedron, respectively.

If face $f$ of tetrahedron $T_t$ is glued to $T_{t'}$, we denote this by $n_t^f=t'$ (for neighbor). Furthermore, we denote the face-gluing permutation for face $f$ of tetrahedron $T_t$ by $\sigma_t^f\in S_4\setminus A_4$. That is, vertex $v\not=f$ of face $f$ of $T_t$ is taken to vertex $\sigma_t^f(v)$ of face $\sigma_t^f(f)$ of $T_{t'}$.

\subsection{Geometric structure} \label{sec:geometricStruct}

Consider an assignment of an isometry class of positively oriented, non-flat geodesic tetrahedra in $\H^3$ to each $T_0, \dots, T_{n-1}$. The vertices of the tetrahedron can be finite, ideal or a mix. We call such an assignment a \emphasisText{geometric structure} on $\myTrig$ if the tetrahedra glue up to form a hyperbolic structure on $M$. If this structure is incomplete, we assume that it can be completed by attaching circles to $M$. We call these circles \emphasisText{core curves}. We call the resulting $M_\text{filled}$ the \emphasisText{filled manifold}. A geometric structure yields a conjugacy class of representations $\pi_1(M)\to\pi_1(M_\text{filled})\to\SO(1,3)$.

Geometric structures can be encoded in various ways. Here, we are primarily focusing on cross ratios $z_0,\dots, z_{n-1}\in\C\setminus\{0,1\}$ (also called shapes) parameterizing ideal geodesic tetrahedra (though many parts of this paper also apply for finite geodesic tetrahedra). The gluing equations introduced by Thurston \cite[Section~4.2]{ThurstonNotes} tell us when they form a geometric structure (also see \cite[\hbox{E.6-i}]{BenedettiPetronio} and \cite[Section~4]{PurcellKnotTheory}). In this paper, we assume that we can compute arbitrary tight interval estimates for the cross ratios. This has been pioneered by \cite{HIKMOT} (relying on \cite[Lemma~2.4]{moser} without saying so; see \cite[Appendix]{matthiasVerifyingFinite}) and implemented in SnapPy by the author since.

\begin{remark}
It is still unproven that every cusped hyperbolic $3$-manifold $M$ has an ideal triangulation admitting a geometric structure. \cite{PetronioWeeks:partiallyFlatTrig} show that every $M$ has an ideal triangulation admitting a semi-geometric structure where some but not all tetrahedra are allowed to be flat. Note that exact arithmetic can be used to verify a semi-geometric structure and that the tiling algorithm (Algorithm~\ref{algorithm:tilingAny}) can be modified to take those as input.
\end{remark}

\begin{remark}
While all manifolds in the SnapPy orientable cusped census are hyperbolic, the provided triangulation is not geometric in six cases: \texttt{t02774}, \texttt{t03695}, \texttt{t06262}, \texttt{t06288}, \texttt{t06361}, \texttt{t07404}.
\end{remark}

\begin{remark}
Alternatives to encoding a geometric structures by shapes include:
\begin{itemize}
\item Ptolemy coordinates parametrizing ideal geodesic tetrahedra (with some extra structure); see \cite{GTZ:ptolemyCoords,GGZ:PtolemyField}.
\item Edge lengths parametrizing finite geodesic tetrahedra; see \cite{casson:Geo} and \cite{matthiasVerifyingFinite} (for verified computation).
\item Vertex Gram matrices (generalizing edge lengths) parameterizing generalized tetrahedra; see \cite{heardThesis}.
\end{itemize}
\end{remark}

\subsection{Cusp neighborhoods}

All \emphasisText{cusp neighborhoods} are assumed to be \emphasisText{horocusp neighborhoods}. That is, a cusp neighborhood is given by an open horoball $H=B(l)$ in the universal cover $\H^3$ where $l\in\posLightCone$ corresponds to the considered cusp. Note that the Decktransformations act on $H$. If any two images of $H$ under this action are either identical or disjoint, we say that the cusp neighborhood \emphasisText{embeds}. Similarly, we define when two cusp neighborhoods are \emphasisText{disjoint}. The area $A(C)$ and the volume of a cusp neighborhood $C$ are those of the quotient of $H$ by the Decktransformations fixing $H$. Note that the volume of a cusp neighborhood $C$ is $A(C)/2$.

\subsection{Cusp cross section} \label{sec:cuspCrossSec}

Consider a complete cusp. The \emphasisText{cusp triangulation} is the link of the ideal vertex corresponding to the cusp. A \emphasisText{cusp cross section} is an assignment of lengths to the edges of the cusp triangulation that is compatible with the shapes of the tetrahedra. More precisely, the ratio of two edge lengths in a triangle is given by the absolute value of the respective cross ratio $z_t$, $1/(1-z_t)$ or $1-1/z_t$.

The possible cusp cross sections are in one-to-one correspondence to the possible cusp neighborhoods about the cusp. Both are parametrized by area. The correspondence is given by the edge lengths of the horotriangle when intersecting the cusp neighborhood with a tetrahedron $T_t$ extended by all rays starting at the relevant vertex and intersecting the tetrahedron. We can easily develop some cusp cross section by assigning some length to some edge of the cusp triangulation and recursing to determine all other edge lengths. Once developed, we can normalize a cusp cross section to a particular area.

\subsection{Standard form, disjointness and embeddedness} \label{sec:stdFormArgs}

We show how to verify that cusp neighborhoods are disjoint and embedded assuming they are in standard form which was already illustrated in Figure~\ref{fig:cuspStdForm} and is defined as follows:
\begin{definition} \label{def:stdForm}
A horoball $H$ about a vertex of an ideal geometric tetrahedron $T$ intersects the tetrahedron in \emphasisDef{standard form} if $H$ intersects $T$ in three of its four faces. A cusp neighborhood intersects an ideal geometric triangulation $\myTrig$ in \emphasisDef{standard form} if, for each tetrahedron $T_t$, the respective horoballs about the vertices of $T_t$ intersect $T_t$ in standard form.
\end{definition}

Let $\myTrig$ be an ideal geometric triangulation of an oriented, complete hyperbolic 3-manifold $M$. We can check whether a cusp neighborhood intersects $\myTrig$ in standard form by checking that the triangles in the corresponding cusp cross section all have areas less than the maximal area in the following lemma:
\begin{lemma} \label{lemma:maxAreaStandardForm}
Let $T$ be an ideal geodesic tetrahedron with cross ratio $z$. Pick a vertex $v$ of $T$. Let $H$ be any horoball about $v$ intersecting $T$ in standard form. The maximal area of the intersection of $\partial H$ with $T$ across all such horoballs is
\[
A(z)=A\left(\frac{1}{1-z}\right)=A\left(1-\frac{1}{z}\right)=\frac{\ImPart(z)}{2 h(z)^2}
\]
\[
\quad\text{where}~h(z)=\frac{1}{2}\begin{cases}
|z|\cdot|1-z|/\ImPart(z) & \text{if $\Delta$ is acute} \\
\text{longest edge of $\Delta$} & \text{otherwise}
\end{cases}
\]
and where $\Delta$ is the Euclidean triangle $0, 1, z$ in $\C$.
\end{lemma}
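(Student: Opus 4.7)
The plan is to work in the upper half-space model of $\H^3$ with $v$ placed at $\infty$. After applying an orientation-preserving isometry, the remaining three ideal vertices of $T$ can be normalized to $0, 1, z \in \C$, so $T$ is the region above the hemisphere whose ideal boundary is the circumscribed circle of the Euclidean triangle $\Delta=(0,1,z)$, bounded on the other three sides by vertical walls rising from the edges of $\Delta$. A horoball about $\infty$ has the form $H=\{t\geq h\}$ for some $h>0$; since $\partial H$ carries a flat metric equal to $1/h^2$ times the Euclidean metric on $\C$, the intersection $\partial H \cap T$ is a copy of $\Delta$ with hyperbolic area $\ImPart(z)/(2h^2)$. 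Maximizing this area is therefore equivalent to minimizing $h$ subject to the standard-form constraint.

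Standard form means $\partial H$ does not meet the opposite (hemispherical) face of $T$, which is the part of the hemisphere of radius $R$ centered at the circumcenter $c$ of $\Delta$ lying over $\Delta$. Hence the minimal admissible $h$ equals $\max_{w\in\Delta}\sqrt{R^2-|w-c|^2}$, attained at the point of $\Delta$ closest to $c$. If $\Delta$ is acute then $c\in\Delta$ and the maximum equals $R$ itself; the standard circumradius formula $R=(\text{product of side lengths})/(4\cdot\text{area})=|z|\cdot|1-z|/(2\ImPart(z))$ recovers the first branch of $h(z)$. If $\Delta$ is obtuse or right, then $c$ lies outside $\Delta$ on the far side of the longest edge, so the point of $\Delta$ closest to $c$ is the midpoint $m$ of that edge; applying Pythagoras to the chord through $m$ gives $\sqrt{R^2-|m-c|^2}=\tfrac12(\text{longest edge})$, the second branch. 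Substituting back yields $A(z)=\ImPart(z)/(2h(z)^2)$.

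The equalities $A(z)=A(1/(1-z))=A(1-1/z)$ are built in: these three cross ratios correspond to the three orientation-preserving normalizations sending the three non-$v$ vertices to $(0,1,\star)$, and $A$, being an intrinsic area determined by $T$ and the choice of $v$ alone, cannot depend on this choice. The main potential obstacle is verifying that the closest-point-to-$c$ analysis really transitions from the circumcenter to the midpoint of the longest edge exactly when $\Delta$ ceases to be acute; this reduces to the elementary Euclidean fact that the circumcenter of an obtuse triangle lies on the opposite side of its longest edge from the opposing vertex, while for a right triangle the two formulas agree because the longest edge is a diameter of the circumscribed circle. With that case boundary pinned down, the rest is substitution.
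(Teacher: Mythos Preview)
Your proof is correct and follows precisely the approach the paper sketches: the paper's entire proof is the sentence ``A calculation using that $h(z)$ is the Euclidean height of the hyperbolic triangle $0,1,z$ in the upper half space model. That is, $h(z)$ is the circumradius of $\Delta$ when $\Delta$ is acute.'' You have simply carried out that calculation in full, including the obtuse case via the midpoint-of-longest-edge argument. One cosmetic point: standard form means the open horoball $H$ misses the opposite face, not that $\partial H$ does, so the minimum $h=h(z)$ is genuinely attained; this does not affect your formula.
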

\begin{proof}
A calculation using that $h(z)$ is the Euclidean height of the hyperbolic triangle $0,1,z$ in the upper half space model. That is, $h(z)$ is the circumradius of $\Delta$ when $\Delta$ is acute.
\end{proof}

\begin{figure}[ht]
\begin{center}
%% Creator: Inkscape 1.4 (e7c3feb1, 2024-10-09), www.inkscape.org
%% PDF/EPS/PS + LaTeX output extension by Johan Engelen, 2010
%% Accompanies image file 'truncatedTet.pdf' (pdf, eps, ps)
%%
%% To include the image in your LaTeX document, write
%%   \input{<filename>.pdf_tex}
%%  instead of
%%   \includegraphics{<filename>.pdf}
%% To scale the image, write
%%   \def\svgwidth{<desired width>}
%%   \input{<filename>.pdf_tex}
%%  instead of
%%   \includegraphics[width=<desired width>]{<filename>.pdf}
%%
%% Images with a different path to the parent latex file can
%% be accessed with the `import' package (which may need to be
%% installed) using
%%   \usepackage{import}
%% in the preamble, and then including the image with
%%   \import{<path to file>}{<filename>.pdf_tex}
%% Alternatively, one can specify
%%   \graphicspath{{<path to file>/}}
%% 
%% For more information, please see info/svg-inkscape on CTAN:
%%   http://tug.ctan.org/tex-archive/info/svg-inkscape
%%
\begingroup%
  \makeatletter%
  \providecommand\color[2][]{%
    \errmessage{(Inkscape) Color is used for the text in Inkscape, but the package 'color.sty' is not loaded}%
    \renewcommand\color[2][]{}%
  }%
  \providecommand\transparent[1]{%
    \errmessage{(Inkscape) Transparency is used (non-zero) for the text in Inkscape, but the package 'transparent.sty' is not loaded}%
    \renewcommand\transparent[1]{}%
  }%
  \providecommand\rotatebox[2]{#2}%
  \newcommand*\fsize{\dimexpr\f@size pt\relax}%
  \newcommand*\lineheight[1]{\fontsize{\fsize}{#1\fsize}\selectfont}%
  \ifx\svgwidth\undefined%
    \setlength{\unitlength}{124.98516173bp}%
    \ifx\svgscale\undefined%
      \relax%
    \else%
      \setlength{\unitlength}{\unitlength * \real{\svgscale}}%
    \fi%
  \else%
    \setlength{\unitlength}{\svgwidth}%
  \fi%
  \global\let\svgwidth\undefined%
  \global\let\svgscale\undefined%
  \makeatother%
  \begin{picture}(1,0.8314165)%
    \lineheight{1}%
    \setlength\tabcolsep{0pt}%
    \put(0,0){\includegraphics[width=\unitlength,page=1]{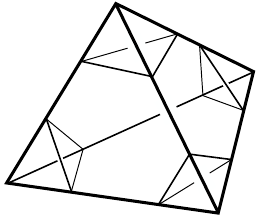}}%
    \put(0.41235917,0.50802155){\makebox(0,0)[lt]{\lineheight{1.25}\smash{\begin{tabular}[t]{l}$a$\end{tabular}}}}%
    \put(0.60622999,0.14135283){\makebox(0,0)[lt]{\lineheight{1.25}\smash{\begin{tabular}[t]{l}$b$\end{tabular}}}}%
    \put(0.71580915,0.30572157){\makebox(0,0)[lt]{\lineheight{1.25}\smash{\begin{tabular}[t]{l}$e$\end{tabular}}}}%
  \end{picture}%
\endgroup%

\end{center}
\caption{An ideal geodesic tetrahedron truncated by horoballs about vertices. The signed distance between the horoballs at the ends of edge $e$ is $-\log(ab)$ where $a$ and $b$ are the Euclidean lengths measured along the corresponding horospheres. In particular, the horoballs are disjoint if and only if $ab\leq 1$. Note the similarity to the Ptolemy coordinate for $e$ given by $c=1/\sqrt{ab}$ (see \cite{zickert:volume}) when using complex cusp cross sections.\label{fig:truncTet}}
\end{figure}
Let $C_i$ be a cusp neighborhood for cusp $i$ intersecting $\myTrig$ in standard form. Consider the corresponding cusp cross sections. If the condition in Figure~\ref{fig:truncTet} is true for each edge $e$ of $\myTrig$ connecting cusp $i$ to itself, then $C_i$ is embedded. Similarly, two cusp neighborhoods $C_i$ and $C_j$ intersecting $\myTrig$ in standard form each are disjoint if the condition is true for each edge $e$ connecting cusp $i$ and $j$.

\subsection{Charts} \label{sec:trigCharts}

For many computations, we need to assign not just an isometry class but an actual geodesic tetrahedron to each $T_t$. We call a triangulation $\myTrig$ together with such an assignment forming a hyperbolic structure on $M$ a \emphasisText{triangulation with charts}. This is in analogy to charts forming an atlas to define a higher structure on a topological manifold. We abuse notation and also denote by $T_t$ the geodesic tetrahedron in $\H^3$. A triangulation with charts yields an $\SO(1,3)$-representation of the fundamental groupoid associated with the dual $2$-skeleton of $\myTrig$. Here, the \emphasisText{fundamental groupoid} associated with cell complex $C$ is the category where the objects are the vertices of $C$ and the morphisms are paths of edges of $C$ up to homotopy. 

%\newpage

We store the following information for each tetrahedron $T_t$:
\begin{itemize}
\item The vertices $\vec{v}_t^v\in\H^3\cup\posLightCone$ with $v=0,\dots,3$ spanning the geodesic tetrahedron.
\item The triangle $T_t^f$ spanned by $\vec{v}_t^v$ for $v\not=f$ and forming face $f$ with $f=0,\dots, 3$.
\item The outward-facing normal vector $\vec{n}_t^f$ for $f=0,\dots, 3$ such that $T_t^f$ is supported by $\hypPlane{\vec{n}_t^f}$. We have $\vec{n}_t^f \cdot \vec{v}_t^{v} = 0$ for $v\not=f$, $\vec{n}_t^f\cdot v_t^{f} < 0$ and $\vec{n}_t^f\cdot \vec{n}_t^f = 1$.
\item The face-pairing matrices $g_t^f$ for each face $f=0,\dots, 3$ taking the vertices to the vertices of the neighboring tetrahedron $T_{t'}$ with $t'=n_t^f$. More precisely, for each $v\not=f$, there is a $\lambda>0$ with
$g_t^f \vec{v}_t^v = \lambda \vec{v}_{t'}^{v'}$ where $v'=\sigma_t^f(v).$ If vertex $v$ of $T_t$ corresponds to complete cusp, we require $\lambda=1$. Note that $\big(g_t^f\big)^{-1}=g_{t'}^{f'}$ where $t'=n_t^f$ and $f'=\sigma_t^f(f)$.
\end{itemize}

We can use the shapes $z_0,\dots,z_{n-1}$ to find the values for the vertices $\vec{v}^v_t$ up to a scaling. This uniquely determines the face-pairing matrices. In practice, it might be easier to do the computation in the upper half-space model with boundary $\C P^1$ and convert to the hyperboloid model later. For verified computations, we can represent points in $\C P^1$ in two ways: as a complex interval for $z$ with a flag selecting one of the charts $(1,z)$ and $(z,1)$; or as algebraic datatype $\C P^1=\C \cup \{\infty\}$ that is either a complex interval for $z\in\C$ or a sentinel for $\infty$  (suitable when starting with a vertex at $\infty$ to develop). For a complete cusp, we can use a cusp cross section to scale each involved $\vec{v}^v_t$ as follows. We pick a face of the tetrahedron $T_t$ adjacent to $\vec{v}^v_t$ and let $e$ be the length of the corresponding edge in the cusp cross section and $\vec{v}_t^{v'}$ and $\vec{v}_t^{v''}$ the other two vertices of the face. Scale $\vec{v}^v_t$ such that
\[
e=d_{\partial B(\vec{v}_t^v)}\left(\vec{v}_t^{v'},\vec{v}_t^{v''}\right).
\]
\label{sec:addedCuspCrossSec}

\begin{remark} \label{rem:insideViewCharts}
When ray-tracing the inside view, we run into the limitation that only single precision floating point numbers are available on some graphics card. To avoid large numerical errors, it is advisable to pick charts such that the incenter of each tetrahedron is at the origin $(1,0,0,0)\in\H^3$. This is also used in \cite{cohomologyFractals} to render cohomology fractals.
\end{remark}

\subsection{Developed fundamental polyhedron} \label{sec:devFundPoly}

Consider a triangulation $\myTrig$ with charts. We call it a \emphasisText{developed fundamental polyhedron} if the tetrahedra form a fundamental polyhedron $P=\cup T_t$ in $\H^3$ for $M\subset M_\text{filled}$. In other words, if there is a spanning tree in the dual $1$-skeleton such that each face-pairing matrix corresponding to an edge in the spanning tree is the identity. The edges in the complement of the spanning tree in the dual $1$-skeleton give elements in the fundamental group. They form the generators of the face-pairing representation $\pi_1(P)$. Some edges give the same element in $\pi_1(P)$ and can be skipped as generators. A developed fundamental polyhedron yields a representation $\rho\from\pi_1(P)\to\SO(1,3)$ factoring through $\pi_1(M_\text{filled})$. We denote its image by $\Gamma\subset\SO(1,3)$, so $M_\text{filled}\cong \Gamma\backslash\H^3$. Note that the face-pairing representation $\pi_1(P)$ is canonically isomorphic to $\pi_1(M,x)$ for any $x$ in the interior of the fundamental polyhedron. This is because there is a unique path (up to homotopy) connecting any two such $x$ inside the fundamental polyhedron.

\subsubsection{Developing using cocycles} \label{sec:cocycleDev}

In the ideal hyperbolic 3-dimensional case, it is straightforward to develop a fundamental polyhedron by computing the fourth vertex of an ideal tetrahedron from its cross ratio and the other three vertices. However, this does not generalize to other cases and higher dimensions. In this section, we show a technique that does generalize. For concreteness, we do this first for a finite 3-dimensional triangulation $\myTrig$ with a hyperbolic structure given by edge lengths and discuss generalizations later in Remark~\ref{rem:cocycleIdeal} and Section~\ref{sec:higherDim}.

We continue where \cite[Section~3]{matthiasVerifyingFinite} left off and use the same conventions. In particular, $\H^3=\{z+t \jmath : t > 0\}$ denotes the upper half-space model for the remainder of this section. Let $\myTrig$ be an oriented, finite triangulation with edge parameters fulfilling all conditions of \cite[Theorem~3.5]{matthiasVerifyingFinite}. Let $\rho$ be the resulting representation of the fundamental groupoid for $\hat{\myTrig}$. That is, given a path $\gamma$ of edges in $\hat{\myTrig}$, $\rho(\gamma)$ is the product of matrices assigned to those edges by the cocycle on $\hat{\myTrig}\subset\myTrig$.

\begin{figure}[ht]
\begin{center}
%% Creator: Inkscape 1.2.2 (b0a84865, 2022-12-01), www.inkscape.org
%% PDF/EPS/PS + LaTeX output extension by Johan Engelen, 2010
%% Accompanies image file 'cocyle.pdf' (pdf, eps, ps)
%%
%% To include the image in your LaTeX document, write
%%   \input{<filename>.pdf_tex}
%%  instead of
%%   \includegraphics{<filename>.pdf}
%% To scale the image, write
%%   \def\svgwidth{<desired width>}
%%   \input{<filename>.pdf_tex}
%%  instead of
%%   \includegraphics[width=<desired width>]{<filename>.pdf}
%%
%% Images with a different path to the parent latex file can
%% be accessed with the `import' package (which may need to be
%% installed) using
%%   \usepackage{import}
%% in the preamble, and then including the image with
%%   \import{<path to file>}{<filename>.pdf_tex}
%% Alternatively, one can specify
%%   \graphicspath{{<path to file>/}}
%% 
%% For more information, please see info/svg-inkscape on CTAN:
%%   http://tug.ctan.org/tex-archive/info/svg-inkscape
%%
\begingroup%
  \makeatletter%
  \providecommand\color[2][]{%
    \errmessage{(Inkscape) Color is used for the text in Inkscape, but the package 'color.sty' is not loaded}%
    \renewcommand\color[2][]{}%
  }%
  \providecommand\transparent[1]{%
    \errmessage{(Inkscape) Transparency is used (non-zero) for the text in Inkscape, but the package 'transparent.sty' is not loaded}%
    \renewcommand\transparent[1]{}%
  }%
  \providecommand\rotatebox[2]{#2}%
  \newcommand*\fsize{\dimexpr\f@size pt\relax}%
  \newcommand*\lineheight[1]{\fontsize{\fsize}{#1\fsize}\selectfont}%
  \ifx\svgwidth\undefined%
    \setlength{\unitlength}{255.33697173bp}%
    \ifx\svgscale\undefined%
      \relax%
    \else%
      \setlength{\unitlength}{\unitlength * \real{\svgscale}}%
    \fi%
  \else%
    \setlength{\unitlength}{\svgwidth}%
  \fi%
  \global\let\svgwidth\undefined%
  \global\let\svgscale\undefined%
  \makeatother%
  \begin{picture}(1,0.5174361)%
    \lineheight{1}%
    \setlength\tabcolsep{0pt}%
    \put(0,0){\includegraphics[width=\unitlength,page=1]{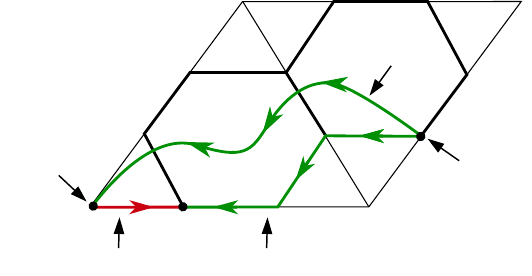}}%
    \put(0.86222926,0.18790697){\makebox(0,0)[lt]{\lineheight{1.25}\smash{\begin{tabular}[t]{l}$x$\end{tabular}}}}%
    \put(-0.00281123,0.20617573){\makebox(0,0)[lt]{\lineheight{1.25}\smash{\begin{tabular}[t]{l}$v=p(\tilde{v})$\end{tabular}}}}%
    \put(0.70506091,0.41411006){\makebox(0,0)[lt]{\lineheight{1.25}\smash{\begin{tabular}[t]{l}$p(\gamma)$\end{tabular}}}}%
    \put(0.19286705,0.01203447){\makebox(0,0)[lt]{\lineheight{1.25}\smash{\begin{tabular}[t]{l}$\gamma_v$\end{tabular}}}}%
    \put(0.44163269,0.01150519){\makebox(0,0)[lt]{\lineheight{1.25}\smash{\begin{tabular}[t]{l}$[\gamma_v p(\gamma)]$\end{tabular}}}}%
  \end{picture}%
\endgroup%

\end{center}
\vspace{-0.6cm}
\caption{Paths to compute the vertex developing map $\Psi\from\widetilde{V}\to\overline{\H}^3$.\label{fig:devPath}}
\end{figure}

Let $\widetilde{V}\subset\widetilde{\myTrig}$ be the vertices of the universal cover $p\from\widetilde{\myTrig}\to\myTrig$. We obtain a \emphasisText{vertex developing map $\Psi\from\widetilde{V}\to\overline{\H}^3$} as follows:  Pick a base vertex $x$ of $\hat{\myTrig}$ and a lift $\widetilde{x}\in\widetilde{\myTrig}$ of $x$. Pick a point in $q\in\overline{\H^3}$. For each vertex $v$ of $\myTrig$, pick a path $\gamma_v$ from $v$ to a vertex of $\hat{\myTrig}$ that is not passing through $\hat{\myTrig}$. Now, given a vertex $\tilde{v}\in\widetilde{V}$, let $\gamma$ be a path in $\widetilde{\myTrig}$ from $\tilde{x}$ to $\tilde{v}$ and let $v=p(\tilde{v})$. Let $[~\_~]$ denote homotoping a path to a path of edges of $\hat{\myTrig}$. We set $\psi(\tilde{v})=\rho([\gamma_v p(\gamma)])q$; see Figure~\ref{fig:devPath}.

If we pick $q=\jmath\in\H^3$, $\Psi$ is (the restriction to the vertices of) a geometric developing map. In this case, a different choice of $x$ or $\widetilde{x}$ changes $\Psi$ by conjugation and the choices of $\gamma_v$ do not affect $\Psi$ at all.

In general, $\Psi$ is equivariant with respect to a geometric representation (which is unique up to conjugation). Thus, $\Psi$ gives the same invariant in the respective group homology and, in particular, the same volume and Chern-Simons invariant as a geometric representation.

\begin{remark}
Let $\myTrig$ be a finite triangulation with a hyperbolic structure. We can compute the volume (and even Chern-Simons invariant) without invoking a formula for the volume of a finite simplex (such as in \cite{Ushijima:volFinite}) by picking $q\in\CP^1$: for each tetrahedron $T_t$ of $\myTrig$, $\Psi$ now gives us ideal vertices and thus a cross ratio $z_t$; the volume is $\sum D(z_t)$ where $D$ is the Bloch-Wigner dilogarithm.
\end{remark}

For a fundamental polyhedron $P$ defined by a spanning tree in the dual $1$-skeleton, we proceed similarly using the complex $\overline{P}$ obtained by replacing each tetrahedron $T_t$ in $P$ by a doubly truncated tetrahedron as in \cite[Figure~2]{matthiasVerifyingFinite}. Pick a base vertex $x$ of $\overline{P}$ and $q=\jmath$. This gives positions for the vertices of $P$. For a face-pairing of $P$, the corresponding matrix is given by $\rho(\gamma')^{-1}\rho(\gamma)$ where $\gamma$ and $\gamma'$ are paths from $x$ to vertices $v$ and $v'$ of $\overline{P}$ paired by the face-pairing.

\begin{remark} \label{rem:cocycleIdeal}
This works analogously for the cocycles coming from cross ratios (see \cite[Example~10.15]{GGZ:GluingEquations} and \cite[Section~3.2]{marche:CS}) or Ptolemy coordinates (see \cite[Figure~3]{GGZ:PtolemyField} and \cite[Proposition~10.4]{GGZ:GluingEquations}). Note that if $\myTrig$ has ideal vertices, $\widetilde{\myTrig}$ is obtained by temporarily removing the vertices of $\myTrig$, taking the universal cover and adding the vertices back in for each end. We also need to pick $q=0$ or $q=\infty$ (depending on conventions) to obtain the geometric developing map.
\end{remark}

\section{Input to the tiling algorithm} \label{sec:algoInput}

\subsection{Input to the algorithm}

The input to the tiling algorithm is a \emphasisText{standard geometric object} $K$ and a \emphasisText{lifted tetrahedron} $mT_t$ with $m\in\Gamma$ serving as seed. They are defined as follows:

\begin{definition}
Let $P\subset\H^3$ be a developed fundamental polyhedron for a hyperbolic 3-manifold $M$ with $M_\text{filled}\cong \Gamma\backslash\H^3$. We define a \emphasisDef{standard geometric object} $K$ as one of the following: \label{def:standardObject}
\begin{itemize}
\item A point $x\in\H^3$. 
\item A line $x_0x_1$ with $x_0, x_1\in\posLightCone$ forming a closed geodesic $\gamma$ in $M_\text{filled}$. We also store the corresponding $h\in\Gamma$ fixing $x_0x_1$.
\item A horoball $B(l)$ corresponding to a cusp neighborhood in $M$.
\end{itemize}
\end{definition}

We use $B_r(K)$ to denote the open neighborhood of size $r$ about a standard geometric object $K\subset\H^3$ of size $r$. We also use $\overline{B}_r(K)$ the corresponding closed neighborhood. If $K$ is a point $x\in\H^3$, this matches the standard definition of a ball $B_r(x)$ about $x$. If $K$ is a line, $B_r(K)$ is a tube about $K$ of radius $r$. If $K=B(l)$ is a horoball, we use the signed distance so $B_r(K)$ is another horoball $B(e^{-r} l)$. $B_r(K)$ is empty if $r=-\infty$ or if $K$ is a point or line and $r\leq 0$.

\begin{definition} \label{def:seed}
The lifted tetrahedra $mT_t$ is a \emphasisDef{seed} for the standard geometric object $K$ if
\begin{itemize}
\item In case $K$ is a point: $mT_t$ contains $K$.
\item In case $K$ is a line:
\begin{itemize}
\item If the corresponding geodesic $\gamma$ is not a core curve: $mT_t$ intersects $K$.
\item Otherwise: a vertex of $mT_t$ coincides with an endpoint of $K$.
\end{itemize}
\item In case $K$ is a horoball $B(l)$: $l$ is parallel to a vertex $m\vec{v}^v_t$ of $mT_t$.
\end{itemize}
\end{definition}

\subsection{Input from applications} \label{sec:inputFromApp}

An application can specify a standard geometric object $K$ in the following ways:
\begin{itemize}
\item In case $K$ is point: $x$ can be given as a point known to be inside a tetrahedron $T_t$. In this case, we can simply use $\Id T_t(=T_t)$ as seed. An example is the incenter of $T_t$.  If $x$ is any point in $\H^3$, we need to find a lifted tetrahedron $mT_t$ containing $x$ as a seed. We show how to do this in the next section.
\item In case $K$ is a line corresponding to a closed geodesic $\gamma$ in $M_\text{filled}$:
\begin{itemize}
\item If $\gamma$ is not a core curve, the application specifies a non-parabolic element $h\in\Gamma$ corresponding to $\gamma$. We compute the fixed points $x_0$ and $x_1$ of $h$ which span $K$. We use a point $x$ on the line as base point. We again use the technique in the next section to find a lifted tetrahedron $mT_t$ containing $x$ as a seed. Note that when we sample $x$ as $(\mu_0x_0+\mu_1x_1)\postNorm$, we should pick $\mu_0, \mu_1>0$ such that $\mu_0/\mu_1$ is not close to an algebraic number with small defining polynomial. This minimizes the chance that the base point hits the 1-skeleton of the geometric triangulation.
\item If $\gamma$ is a core curve, then the application specifies the ideal vertex of $\myTrig$ corresponding to the cusp that $\gamma$ completes. Let $T_t$ be a tetrahedron such vertex $v$ of $T_t$ corresponds to the given ideal vertex. We set $K$ to the lift of $\gamma$ which has one endpoint coinciding with the vertex $\vec{v}_t^v$ of $T_t$ and $h$ to the primitive element in $\Gamma$ fixing $K$. We can compute $K$ and $h$ using suitable peripheral curves in the corresponding cusp or using generalized cusp cross sections; see \cite{ghht:lenSpec}. We can then use the tetrahedron $T_t$ itself as seed $\Id T_t$.
%We use any tetrahedron $T_t$ adjacent to that ideal vertex 
%$\Id T_t(=T_t)$ as seed where $T_t$ is any tetrahedron having a vertex corresponding to the given ideal vertex. We compute the $h\in\Gamma$ corresponding to $\gamma$, the complex translation length of $h$, and $x_0$ and $x_1$ spanning $K$ 
\end{itemize}
\item In case $K$ is a horoball: we assume that $K$ is a horoball $B(\vec{v}^v_t)$ about a vertex of the developed fundamental polyhedron $P$. Thus, we can use the tetrahedron $T_t$ itself as seed $\Id T_t$.
\end{itemize}

\newpage

\section{Graph tracing} \label{sec:graphTrace}

In this section, we show how to find one or several lifted tetrahedra $mT_t$ such that at least one contains a given point $x\in\H^3$. These lifted tetrahedra serve as seeds (see Definition~\ref{def:seed}) for the tiling algorithm (Algorithm~\ref{algorithm:tilingAny}) later.

\subsection{Graph trace algorithms}

Graph tracing has a heuristic step (Algorithm~\ref{algorithm:graphTracing}) and a verification step (Algorithm~\ref{algorithm:graphTracingVerificationStep}). The verification step addresses, in particular, the case where the point $x$ is on a face of a tetrahedron. In this case, we cannot use interval arithmetic to prove that $x$ is contained in any of the two (closed) lifted tetrahedra $mT_t$ adjacent to the face. However, we can prove (with sufficient precision), that $x$ is in the union of those two neighboring tetrahedra (more precisely, in the intersection of the six half-spaces defined by the six non-paired faces). Such a pair of candidate seeds is sufficient as input for the tiling algorithm.

\begin{example}
There are cases where having the point $x$ on a face is unavoidable. Consider a shortest geodesic $\gamma$ in the geometric census triangulation \texttt{m125}. $\gamma$ is entirely contained in the $2$-skeleton. Thus, any point $x$ we pick on a lift $K$ of $\gamma$ (as in Section~\ref{sec:inputFromApp}) is on a face of a tetrahedron. Note though that a generic choice of $x$ on a geodesic avoids the $1$-skeleton.
\end{example}

\begin{algorithm}
\begin{tabular}{rp{15.4cm}}
{\bf Input:} & Developed fundamental polyhedron $P\subset \H^3$.\\
& Point $x\in\H^3$.\\
{\bf Output:} & Lifted tetrahedron $mT_t$ ``close'' to $x$ (that is, we cannot prove that $x$ is outside of $mT_t$ if using intervals).\\
{\bf Note:} & $\varepsilon = 0$ if using intervals.\\
& $\varepsilon >0$ if using floating-point numbers to avoid potential infinite loop when $x$ is on a face of a tetrahedron and the algorithm does not settle on either of the two tetrahedra adjacent to that face.\\
{\bf Algorithm:}
\end{tabular}
\begin{algorithmSteps}
\item Pick tetrahedron $T_t$. \label{step:pickTet}
\item $m\leftarrow\Id$ and $f\leftarrow -1$.
\item Repeat:
\begin{algorithmSteps}
\item $d_{f'} \leftarrow x\cdot \left(m \vec{n}^{f'}_t\right)$ for $f' = 0, \dots, 3$.
\item If no $d_0, \dots, d_3 > \varepsilon$: Return $mT_t$.
\item $f\leftarrow$ index of $d_{f'}$ with largest value/midpoint where $f'\not=f$.
\item $t, f \leftarrow n_t^f, \sigma_t^f(f)$.
\item $m\leftarrow m g_t^f$
\end{algorithmSteps}
\end{algorithmSteps} 
\caption{Graph trace (heuristic).\label{algorithm:graphTracing}}
\end{algorithm}

\begin{algorithm}
\begin{tabular}{rp{15.4cm}}
{\bf Input:} & Developed fundamental polyhedron $P\subset \H^3$.\\
& Point $x\in\H^3$.\\
& Lifted tetrahedron $m{T_t}$ close to $x$ (output of Algorithm~\ref{algorithm:graphTracing}).\\
{\bf Output:} & Lifted tetrahedra $m_0T_{t_0},\dots,m_{k-1}T_{t_{k-1}}$, at least one contains $x$.\\
{\bf Note:} & $\varepsilon = 0$ if verified.\\
{\bf Algorithm:}
\end{tabular}
\begin{algorithmSteps}
\item $d_{f} \leftarrow x\cdot \left(m \vec{n}^{f}_t\right)$ for $f = 0, \dots, 3$.
\item If $d_0, \dots, d_3 < -\varepsilon$: Return $mT_t$.
\item If three of the four inequalities $d_0, \dots, d_3 < -\varepsilon$ hold:
\begin{algorithmSteps}
\item $f'\leftarrow$ the unique $d_{f'}$ where $d_{f'}$ does not hold.
\item $t', f' \leftarrow n_t^{f'}, \sigma_t^{f'}(f')$.
\item $m'\leftarrow m g_{t'}^{f'}$
\item $d'_f \leftarrow x\cdot \left(m'\vec{n}^{f'}_{t'}\right)$ for $f=0,\dots, 3$.
\item If $d'_0, \dots, \hat{d'}_{f'}, \dots, d'_3 < -\varepsilon$: Return $mT_t$ and $m'T_{t'}$.
\end{algorithmSteps}
\item Fail.
\end{algorithmSteps}
\caption{Graph trace (verification).\label{algorithm:graphTracingVerificationStep}}
\end{algorithm}

\subsection{Dual views} \label{sec:graphTraceDualViews}

Note that Algorithm~\ref{algorithm:graphTracing} finds a lifted tetrahedron $mT_t$ close to $x$. We call this the \emphasisText{object view} version of the graph trace algorithm.

Let $(K,x)$ be some geometric object with base point $x$. We can rewrite Algorithm~\ref{algorithm:graphTracing} so that it acts on $(K,x)$ by the inverse of $g^f_t$ instead of $m$ by $g^f_t$. Since $m$ is always the identity, we can just drop it from the algorithm and its output. We call this the \emphasisText{tetrahedra view} version of the graph trace algorithm.

\subsection{Camera in the inside view}

One application of the graph trace algorithm is the ray-traced inside view: we need to ensure that the origin $x$ of the camera is inside (or at least close) to a known tetrahedron $T_t$.

More precisely, we encode the camera as a frame $s\in\SO(1,3)$ in the coordinate system given by the chart of a tetrahedron $T_t$. That is, the columns of $s$ are the origin $x$ and the right, up, and forward axis of the camera. We apply the tetrahedra view version of Algorithm~\ref{algorithm:graphTracing} to $(s, x)$. This is also used in \cite[Section~7.4 and 7.8]{cohomologyFractals} to generate the images of the cohomology fractals (even though it is not explained there).

Recall from Remark~\ref{rem:insideViewCharts} that the tetrahedra in the ray-traced inside view do not form a fundamental polyhedron. Thus, we need to change the tetrahedra view version of Algorithm~\ref{algorithm:graphTracing} to skip Step~\ref{step:pickTet} and instead take as input the tetrahedron $T_t$ whose chart we use as coordinate system for the camera $(s,x)$.

\clearpage

\section{Tiling algorithm} \label{sec:tiling}

Let $P=\cup T_t$ be a developed fundamental polyhedron (as defined in Section~\ref{sec:devFundPoly}) for a hyperbolic 3-manifold $M$. Let $\Gamma\subset\SO(1,3)$ be the group generated by the face-pairing matrices of $P$ such that $M_\text{filled}\cong \Gamma\backslash\H^3$. Let $K$ be a standard object (as defined in Definition~\ref{def:standardObject}). We call $mT_t$ with $m\in\Gamma$ a \emphasisText{lifted tetrahedron} (in $\H^3$). 

\subsection{Simple case} \label{sec:tilePoint}

Before tackling the general case, we make the following simplifying assumptions:
\begin{itemize}
\item We use exact arithmetic.
\item The standard object $K$ we tile about is a point $x\in\H^3$ in the interior of a lifted tetrahedron $m^\text{seed}T_{t^\text{seed}}$.
\item $M\cong\Gamma\backslash\H^3$ is complete.
\end{itemize}

\begin{figure}[ht]
\begin{center}
\scalebox{0.88}{
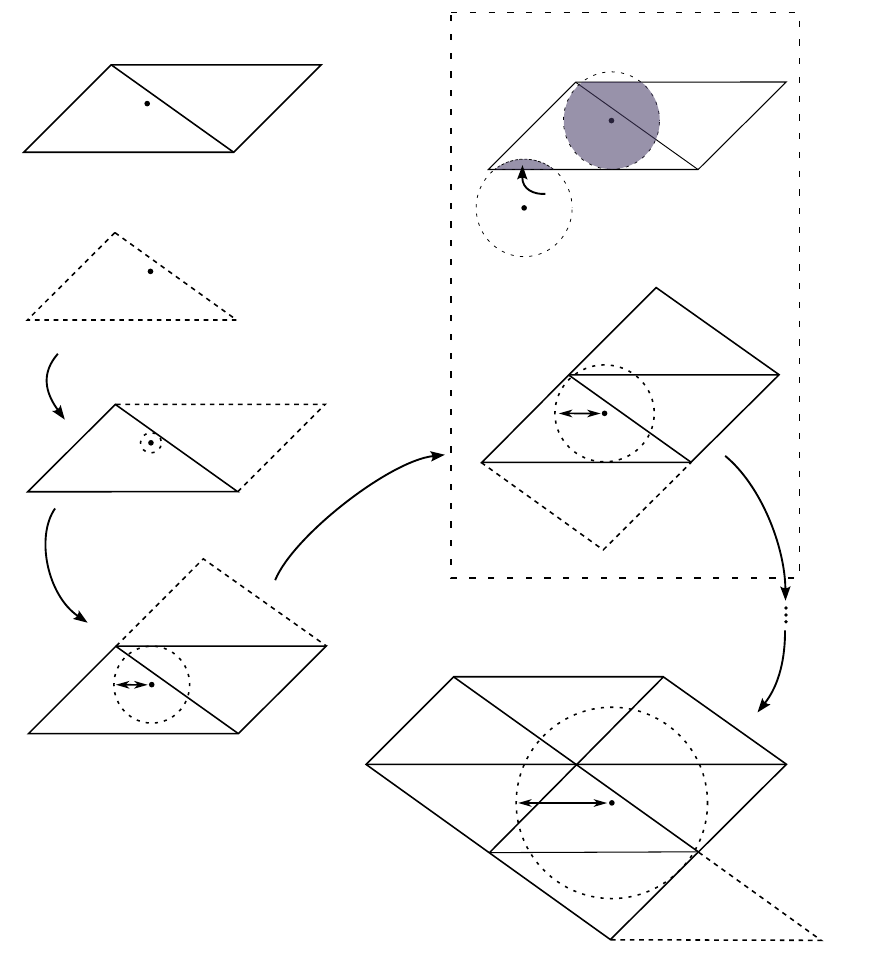}
\end{center}
\vspace{-0.5cm}
\caption{Tiling about a point $K=x$ for the developed fundamental polyhedron $P$ of a Euclidean torus. We can check whether the neighborhood $B_r(x)$ with $r=r_3$ embeds into the manifold $M$ by considering the first three tiles. That is, we check for each simplex $T_t$ whether the neighborhoods about the respective lifted objects $m_i^{-1}x$ are disjoint. For simplex $T_0$, we get two lifted neighborhoods, namely about $\Id x(=x)$ and $(g_1^0g_0^2)^{-1}x$. These two lifts are disjoint. For simplex $T_1$, we only have a single lift, namely about $(g_1^0)^{-1}x(=x)$, and, thus, have nothing to check. Therefore, $B_{r_3}(x)$ embeds into $M$. Note that $B_{r_8}(x)$ does not embed into $M$. Thus, the injectivity radius about $x$ (or, equivalently, its embedding size $d_M(x,x)/2$) is half the minimal distance between any two distinct lifted objects $m_0^{-1}x, \dots, m_7^{-1}x$ corresponding to the same tetrahedron.
\label{fig:tilingExample}}
\end{figure}

The lifted tetrahedra $mT_t$ tile all of $\H^3$. They are partially ordered by their distances $d(x,mT_t)$. We enumerate them, respecting this partial order, as follows. The first lifted tetrahedron is the seed $m^\text{seed}T_{t^\text{seed}}$. We obtain a subsequent lifted tetrahedron by iteratively picking a lifted tetrahedron closest to $x$, that is neighboring an already enumerated tetrahedron, and that has not been enumerated already.

Figure~\ref{fig:tilingExample} shows an example where $x\in T_0$ so that we can use $\Id T_0$ as the seed. 
This gives us a stream of pairs $(r_0, m_0T_{t_0}), (r_1, m_1T_{t_1}), \dots$ of distinct lifted tetrahedra and non-decreasing tiling radii $r_i=d(x,m_iT_{t_i})$ with $r_i\to\infty$.

\subsection{General case} \label{sec:tilingGeneralCase}

Note that the above procedure requires deciding which of two distances $d(x, mT_t)$ and $d(x, m'T_{t'})$ are larger. In particular, when they are equal, we must prove they are equal and then break a tie. Thus, we cannot use interval arithmetic for the above procedure.

Luckily, many applications do not require the tiles in exactly this order. Instead, the following weaker property for a stream of tiles $(r_0, m_0T_{t_0}), (r_1, m_1T_{t_1}), \dots$ is sufficient: for each $i$, the lifted tetrahedra $m_0T_{t_i},\dots, m_{i-1}T_{t_{i-1}}$ cover the closed ball $\overline{B}_{r_i}(x)$.

Loosening the requirement on the stream in this way allows us to use a lower bound for, rather than the exact value of, $d(x, mT_t)$. That is, we pick as the next lifted tetrahedron the one with lowest lower bound for $d(x,mT_t)$. This allows us to drop the first simplifying assumption and, thus, use interval arithmetic.

This weaker requirement also allows us to start with a list of candidate seeds (such as given by Algorithm~\ref{algorithm:graphTracingVerificationStep}) \[m^\text{seed}_0T_{t^\text{seed}_0},~\dots,~ m^\text{seed}_{k-1}T_{t^\text{seed}_{k-1}}\] where we know that at least one of them contains $x$ (or, more generally, is a seed for $K$) but we do not know which. That is, we simply enumerate all distinct candidate seeds first with $r_i=-\infty$.

Next, note that we need infinitely many lifted tetrahedra $mT_t$ in $\H^3$ to cover any non-empty neighborhood $\overline{B}_r(K)\subset\H^3$ of $K$ when $K$ is a line or a horoball. To drop the second simplifying assumption, we instead tile the quotient space $\Gamma_K\backslash\H^3$ where $\Gamma_K=\{m\in\Gamma : mK = K\}$ is the stabilizer of $K$. For the implementation, this means that we regard two lifted tetrahedra $mT_t$ and $m'T_{t'}$ as the same if $m$ and $m'$ are representatives of the same coset of $\Gamma_K\backslash\Gamma$ and $t=t'$.

The resulting stream of lifted tetrahedra $(r_0, m_0T_{t_0}), (r_1, m_1T_{t_1}), \dots$ now covers the neighborhoods $\overline{B}_{r_i}(K)$ of the object $K$ in the quotient space $\Gamma_K\backslash\H^3$. We call this stream the \emphasisText{object view}. We may convert this to the stream $(r_0, (m_0^{-1}K, T_{t_0})),\allowbreak (r_1, (m_1^{-1}K, T_{t_1})),\dots$ where we act on the object $K$ instead the tetrahedra $T_t$. We call this the \emphasisText{tetrahedra view} (similarly to Section~\ref{sec:graphTraceDualViews}). Note that a lifted object $(m^{-1}K, T_t)$ appears only once in the tetrahedra view when regarded as a pair of a subset of $\H^3$ and an index $t$. 

One application of the tetrahedra view is to compute distances between standard objects in a manifold. This is done in Section~\ref{sec:embeddingSizes}. See Figure~\ref{fig:tilingExample} for an example.

\begin{remark} \label{rem:localViewRaytracing}
Another application of the tetrahedra view is ray-tracing the inside view of a hyperbolic manifold $M$. This application requires specifying the lifted objects to trace against in each simplex (with some additional coordinate transforms in light of Remark~\ref{rem:insideViewCharts}). Imagine we want to show $\overline{B}_{r_3}(x)$ in the inside view corresponding to Figure~\ref{fig:tilingExample} from a camera inside $T_0$ looking at $(g_1^0g_0^2)^{-1}x$. If we did not give the shader for $T_0$ the lifted object corresponding to Tile~2, the rays from the camera would hit the lower edge of $T_0$ instead and we would see a hole in $\partial \overline{B}_{r_3}(x)$ in the inside view. For a 3-dimensional hyperbolic example; see Figure~\ref{fig:tilingRaytracing}.
% \todo{Local view motivation}
 %We can also use the tiling algorithm to show an object in the inside view of a hyperbolic 3-manifold $M$; see . An example is a tube about a geodesic $K$. The inside view shader traces rays within a triangulation with charts. As input to the shader, we can specify additional objects to trace against per tetrahedron. Unless a the neighborhood about $K$ in $M$ fits into a single tetrahedron, we need to use the local view tiling algorithm to compute the necessary lifts of $K$ for each tetrahedron. Otherwise, we see holes in the inside view. This is also illustrated by the earlier example in Figure~\ref{fig:tilingExample}: assume we wanted to show $\partial B_{r_3}(x)$ in the inside view. If we didn't give the lift $(g_1^0g_0^2)^{-1}x$ for $T_0$ to the shader, we would see a hole.
\end{remark}

\begin{figure}[h]
\begin{center}
\includegraphics[width=5cm]{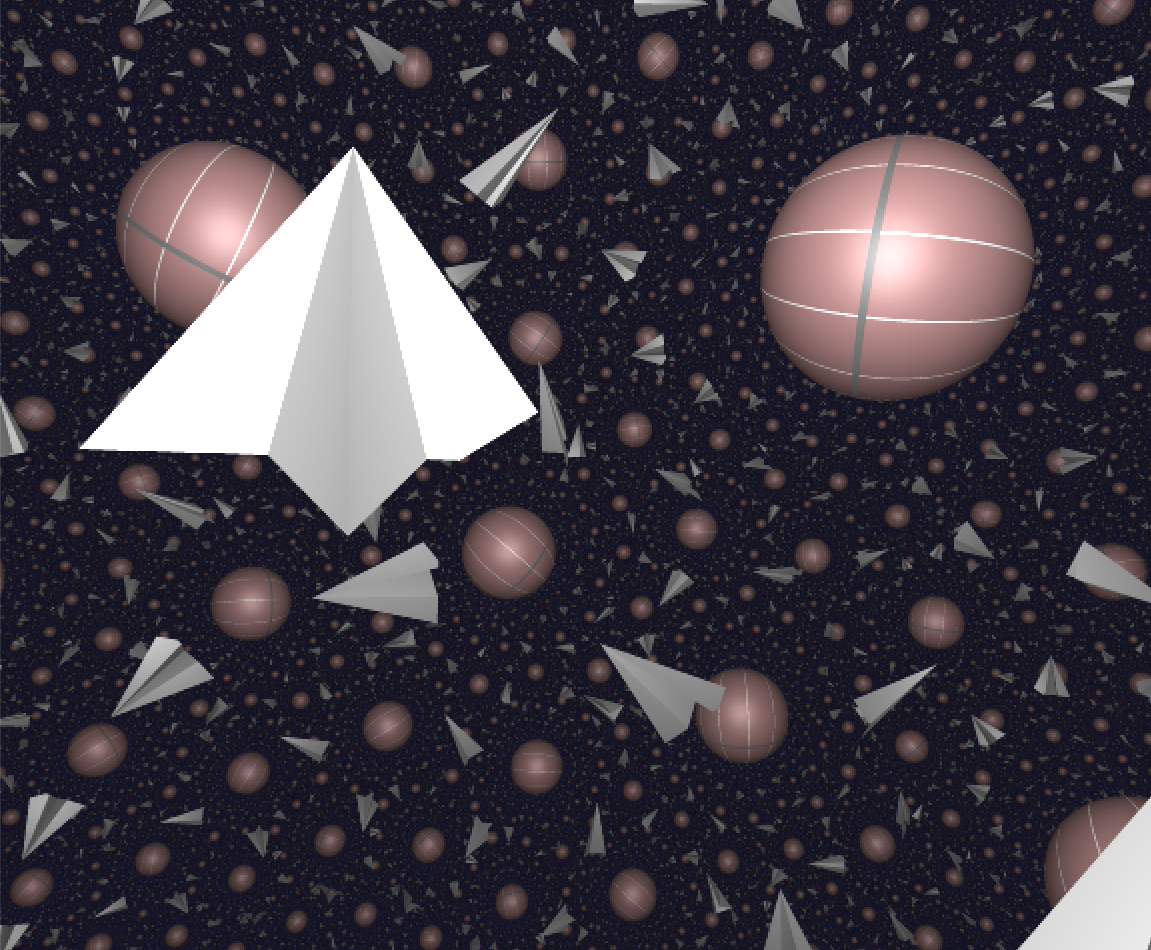}
\includegraphics[width=5cm]{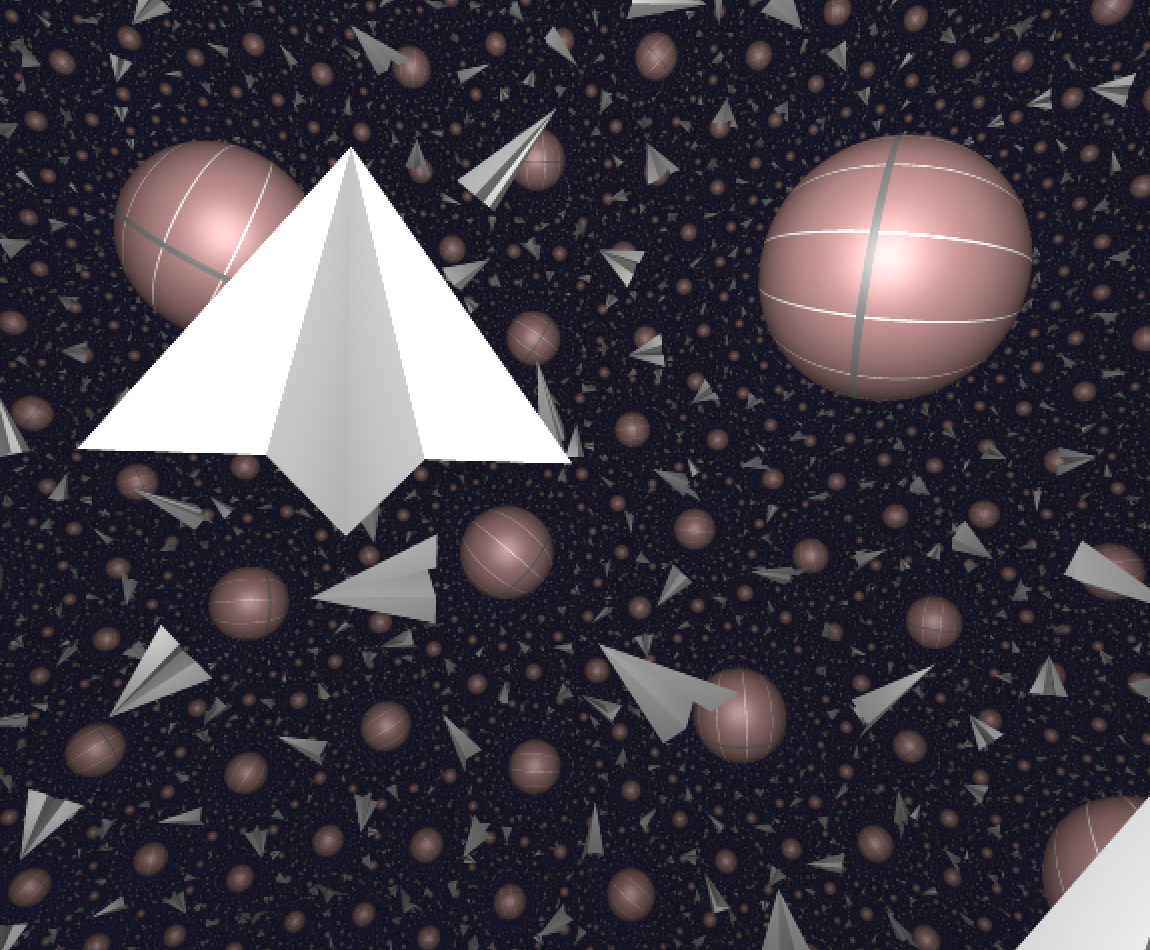}\\
\includegraphics[width=5cm]{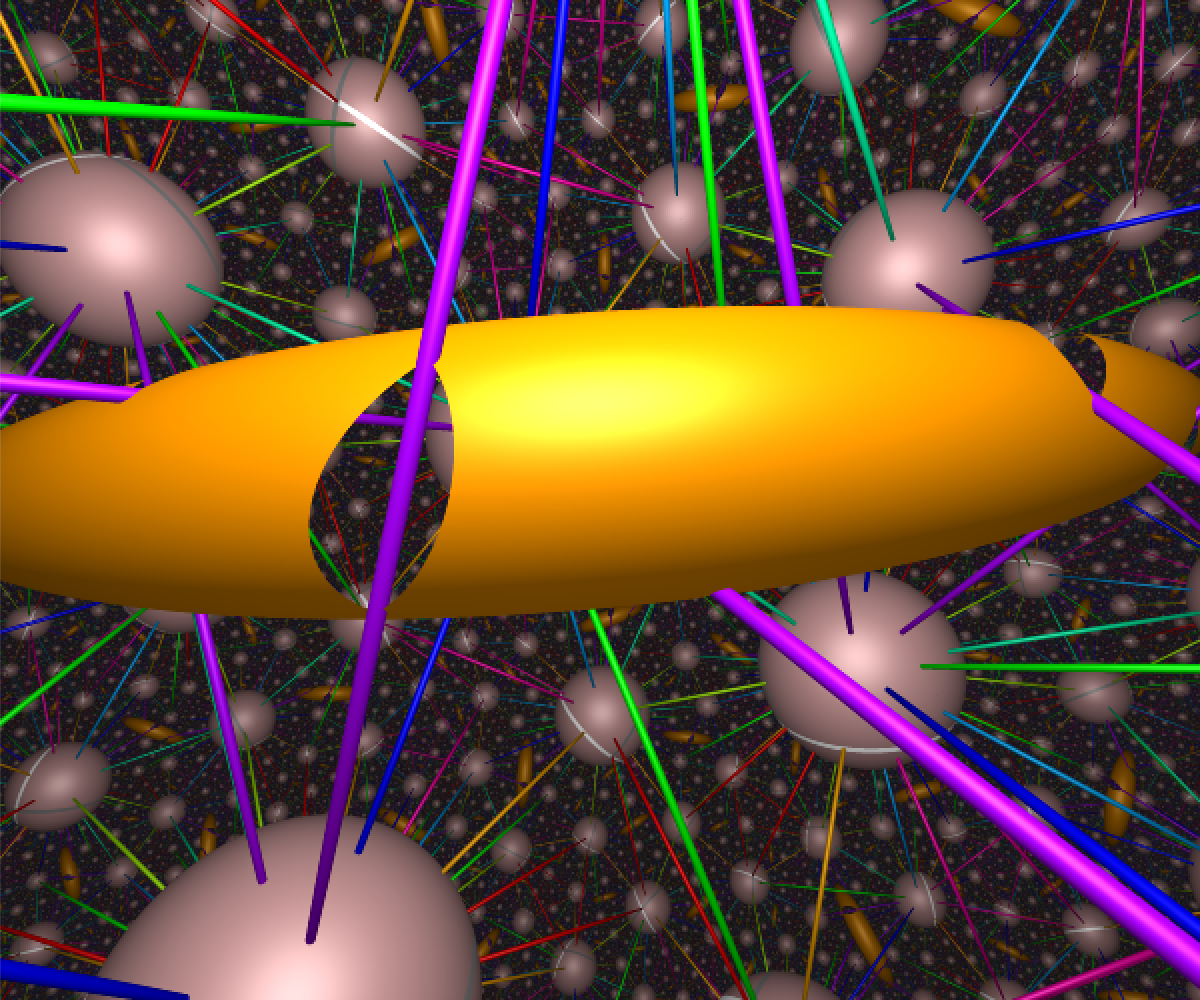}
\includegraphics[width=5cm]{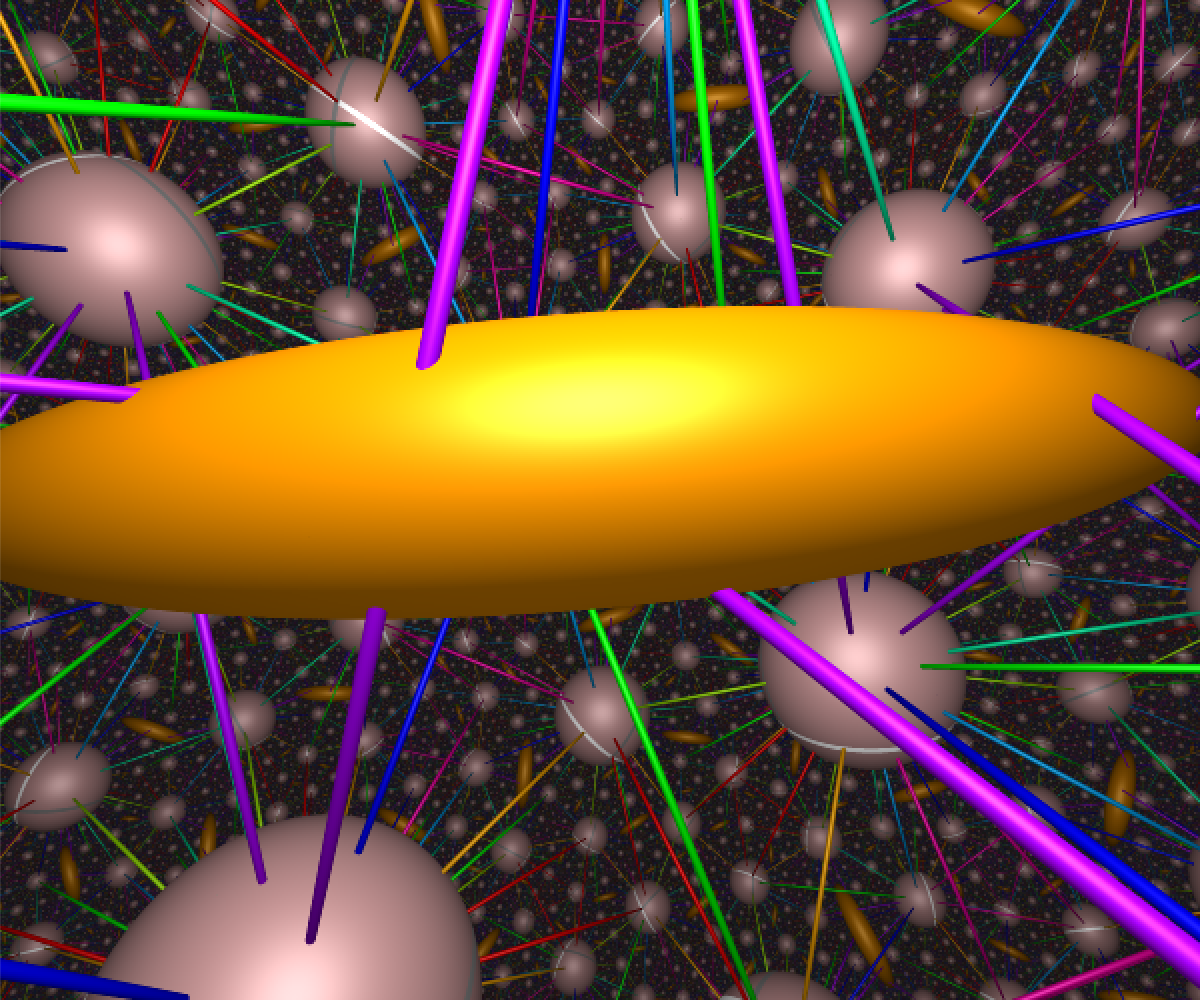}\\
\end{center}
\caption{Inside view with a paper plane in \texttt{m004} (top) and a tube about a geodesic (word \texttt{e}) in \texttt{o9\_34631} (bottom). We use the tiling algorithm to compute the corresponding lifted objects for the ray-tracing shader for the inside view. To demonstrate that this is necessary, we also show the picture when stopping the tiling algorithm prematurely (left) just before it places its last piece: the tip of the right wing of the paper plane is clipped (top) and the geodesic tube has a hole (bottom).
%, the objects are missing pieces such as the corner 
%The paper plane is missing a corner and the geodesic tube a wedge when stopping the tiling algorithm prem
%We use the local view tiling algorithm to compute the input to the ray-tracing shader. 
%Let $r$ be the radius of the paper plane or tube, respectively. The objects appear incomplete when stopping the tiling algorithm prematurely (left). The objects are complete when running until $r_i>r$ (right).
\label{fig:tilingRaytracing}}
\end{figure}

When we drop the last simplifying assumption, we no longer have $r_i\to\infty$ when $M$ is incomplete as the lifted (spun) tetrahedra approach a core curve without reaching it. More precisely, assuming we use the exact value for $d(K,mT_t)$, we have $r_i\to R$ where $R$ is the minimal distance of $K$ to any geodesic $\gamma\subset\H^3$ that is a lift of a core curve but not identical to $K$. This maximal tiling radius $R$ is given by:
\begin{itemize}
\setlength{\itemsep}{-0pt}
\item $R=\infty$ if $M$ is complete.
\item $R<\infty$ if $M$ is incomplete. \vspace{-4pt}
\begin{itemize}
\setlength{\itemsep}{-0pt}
\item $R$ is the minimal distance of $K$ in $M_\text{filled}$ to any core curve if $K$ is not a core curve. In particular, $R=0$ if $K$ is a geodesic intersecting a core curve transversally.
\item $R$ is the minimum of the minimal distance of $K$ in $M_\text{filled}$ to any other core curve and twice the embedding size of a tube about $K$ in $M_\text{filled}$ if $K$ is a core curve.
\end{itemize}
\end{itemize}
\begin{remark}
In \cite{ghht:lenSpec}, we show how to avoid $r_i$ being limited by the incompleteness locus $M_\text{filled}\setminus M$ by using truncated tetrahedra to tile around it.
\end{remark}

We still need to describe how to compute the distance $d(K, mT_t)$. It is given by the minimal distance $\min_{f=0,\dots,3} d(K, mT_t^f)$ to any face $f$ of $mT_t$ unless $K$ is a point inside $mT_t$ which only happens for the seeds for which we assume $r_i=-\infty$ anyway.

\subsection{The algorithm} \label{sec:tilingAlgoDetailSec}

We can now give the tiling algorithm: Algorithm~\ref{algorithm:tilingAny}. For the priority queue $Q$, we can use a binary heap as introduced in \cite{heapsort}. We describe the verified implementation of the set $S$ of lifted tetrahedra in $\Gamma_K\backslash \H^3$ in Sections~\ref{sec:verifiedDicts} and~\ref{sec:setLiftedTets}.

\begin{algorithm}[h]
\begin{tabular}{rp{15.4cm}}
{\bf Input:} & Developed fundamental polyhedron $P$ (see Section~\ref{sec:devFundPoly}) for hyperbolic 3-manifold $M$.\\
& Standard geometric object $K$ in $\H^3$ (see Definition~\ref{def:standardObject}).\\
& Lifted tetrahedra $m^\text{seed}_0T_{t^\text{seed}_0},\dots, m^\text{seed}_{k-1}T_{t^\text{seed}_{k-1}}$ such that at least one $m^\text{seed}_iT_{t^\text{seed}_i}$ is a seed for $K$ (see Definition~\ref{def:seed}).\\
{\bf Output:} & Stream $(r_0, m_0T_{t_0}), (r_1, m_1T_{t_1}), \dots$ with $m_iT_{t_i}$ lifted tetrahedra distinct in $\Gamma_K\backslash\H^3$ such that $m_0T_{t_0}, \dots, m_{i-1}T_{t_{i-1}}$ cover $\overline{B}_{r_i}(K)$ in $\Gamma_K\backslash \H^3$ for each $i$ and such that $r_i\to R$ (see Section~\ref{sec:tilingGeneralCase}).\\
%& Here $\Gamma_K=\{m\in\Gamma : mK = K\}$ and $R$ is the maximal $r$ with $B_r(K)\setminus K$ contained in $M\subset M^\text{filled}$.% (thus, $R=\infty$ if $M$ is complete).
{\bf Algorithm:}\\
\end{tabular}
\begin{algorithmSteps}
\item[Q] Priority queue of $(r, mT_t, f_\text{entry})$ with $r\in\{-\infty\}\cup\R$, $mT_t$ a lifted tetrahedron and $f_\text{entry}\in\{-1,0,\dots,3\}$. Use $\underline{r}$ as key.
\item[S] Set of lifted tetrahedra $mT_t$ in $\Gamma_K\backslash\H^3$ (see Sections~\ref{sec:verifiedDicts} and~\ref{sec:setLiftedTets}).
\item Add $(-\infty, m^\text{seed}_0T_{t^\text{seed}_0}, -1), \dots, (-\infty, m^\text{seed}_{k-1}T_{t^\text{seed}_{k-1}}, -1)$ to $Q$.
\item Repeat:
\begin{algorithmSteps}
\item Remove the $(r, mT_t, f)$ with the lowest key from $Q$.
\item If $mT_t$ is not in $S$:
\begin{algorithmSteps}
\item Emit $(r, mT_t)$.
\item Add $mT_t$ to $S$.
\item For each $f=0,\dots, 3$ with $f\not=f_\text{entry}$:
\begin{algorithmSteps}
\item Let $r'$ be the lower bound for $d\left(K,mT_t^{f}\right)$ from Section~\ref{sec:verifiedDistances}.
\item Add $(r', m'T_{t'}, f')$ to $Q$ where $t'=n_t^f$ and $f'=\sigma_t^f(f)$ and $m'=mg_{t'}^{f'}$.
\end{algorithmSteps}
\end{algorithmSteps}
\end{algorithmSteps}
\end{algorithmSteps}
\caption{The tiling algorithm. \label{algorithm:tilingAny}}
\end{algorithm}

To understand how the algorithm works, consider a boundary face of the tiled region; see Figure~\ref{fig:tilingExample}. Let $mT_t$ be the not yet enumerated lifted tetrahedron across that boundary face. Let $f_\text{entry}=0,\dots,3$ be the face of $mT_t$ corresponding to that boundary face. Let $r$ be the distance of that boundary face to $K$. This gives a triple $(r, mT_t, f_\text{entry})$. The priority queue $Q$ contains these triples for all the boundary faces (and potentially some other triples for already enumerated lifted tetrahedra). Thus, the head of $Q$ is a candidate for the next lifted tetrahedron to emit. We just need to check that this candidate was not enumerated already. 

\begin{remark}
We can easily modify graph tracing and the tiling algorithm to carry along with each matrix $m\in\Gamma$ the word in the face-pairing representation $\pi_1(P)$ associated with $P$.
\end{remark}

\begin{remark}
Note that $d\left(K, mT_t^f\right)=d\left(m^{-1}K,T^f_t\right)$ but computing $mT_t^f$ might be more expensive than $m^{-1}K$ and the lifted objects might be needed by the application anyway. Thus, we might consider rewriting the tiling algorithm in the tetrahedra view.
\end{remark}

\subsection{Verified computation} \label{sec:verifiedTiling}

We have the following guarantee for the verified version of Algorithm~\ref{algorithm:tilingAny}: There is a sequence $m_0T_{t_0},\allowbreak m_1T_{t_1},\dots$ of (exact) lifted tetrahedra (distinct in $\Gamma_K\backslash\H^3$) and a sequence of (exact) non-decreasing $r_0,r_1,\dots$ such that the $m_0T_{t_0}, \dots, m_{i-1}T_{t-1}$ cover $\overline{B}_{r_i}(K)\subset \Gamma_K\backslash\H^3$ for each $i$ and such that the intervals produced by the algorithm contain these values. In particular, the (exact) $m_0T_{t_0}, \dots, m_{i-1}T_{t-1}$ cover $\overline{B}_{\underline{r_i}}(K)\subset \Gamma_K\backslash\H^3$ where $\underline{r_i}$ is the left endpoint of the interval the algorithm produces for $r_i$.

While the true value of $r_i$ is non-decreasing and contained in the interval for $r_i$, the length of those intervals might fluctuate. Thus, $\underline{r_i}$ can fail to be non-decreasing. With fixed precision, the tiling algorithm could enter a mode where the intervals for $r_i$ grow faster than the true value of $r_i$ is increasing. This would result in an application waiting for a long time without making progress. However, in practice, such a scenario does not last for long as the growing intervals cause the algorithm to fail when looking up $mT_t$ in the set $S$.

By increasing precision, we can achieve any number of tiles to be produced before the algorithm fails. Similarly, we can make the maximum value of $\underline{r_i}$ to be arbitrary close to the maximal tiling radius $R$ (Section~\ref{sec:tilingGeneralCase}) before the algorithm fails. Future work might include adaptively increasing the precision of the intervals as we tile. It is unclear, however, how to exactly do this. For example, do we need to recompute all previous tiles when increasing the precision to tile further?

\section{Dictionaries for verified computations} \label{sec:verifiedDicts}

\subsection{Dictionary} \label{sec:dict}

A \emphasisText{dictionary} $D\from  X\rightharpoonup Y$ is a mutable partial assignment from keys in $X$ to values in $Y$. Typically, $D$ starts empty and has methods to set a value for a key or look-up a key. Here, we often use geometric objects such as matrices $m\in\Gamma$ as keys. For verified computations, we need dictionaries that work with interval estimates for a key in $X$. That is look-ups using different interval estimates for the same key need to return the same value and explicitly fail if the look-up is ambiguous. We introduce such dictionaries here.\label{sec:defHashDict}

\subsection{Hash dictionary}

Assume that we are given an equality predicate $\Eq$ that checks whether two given keys are the same given their interval estimates. A simple implementation $D_{\Eq}\from  X\rightharpoonup Y$ of a dictionary just stores all key-value pairs $(k,v)$ in an unstructured way. To look-up a key $k'$, we simply evaluate $\Eq(k,k')$ for each key-value pair $(k,v)$ in $D_{\Eq}$ and return the first match.

This, however, is not efficient. Thus, assume that we are also given a hash function $h\from  X\to\R$ for the keys. We call a dictionary $D_{\Eq,h}\from  X\rightharpoonup Y$ using $h$ to accelerate look-ups a \emphasisText{hash dictionary}.\label{sec:hashDict}

\begin{figure}
\begin{center}
\includegraphics[height=9cm]{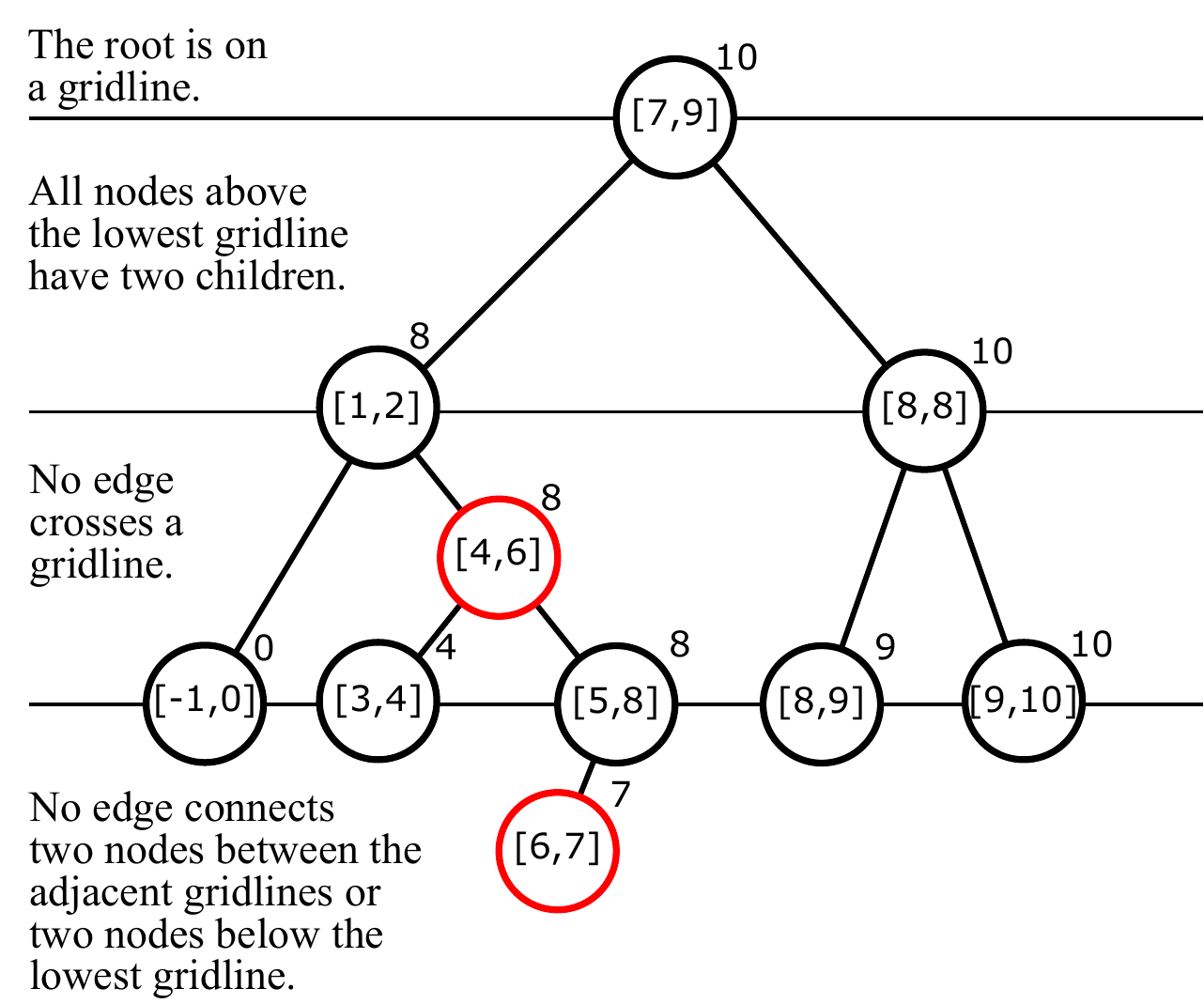}
\end{center}
\caption{An interval tree (introduced by Herbert Edelsbrunner) is a binary search tree sorted by the left endpoint of the interval. Each node is also annotated by the highest right endpoint among all its descendants including the node itself. The annotation helps to quickly dismiss irrelevant parts of the tree when looking for all intervals overlapping a given interval. We want to ensure that the binary search tree is balanced so that operations are $\mathcal{O}(\log n)$ where $n$ is the number of items. To do this, we require that it can be drawn on ruled paper with a lowest gridline such that the above rules are fulfilled. We leave it as exercise to the reader to figure out the necessary tree rotations (where a node takes the place of its parent and the parent becomes a child of the node) and vertical movements to reinstantiate the rules after inserting a new node as leaf. This is also known as red-black or dichromatic tree (introduced in \cite{redBlack}) where the color depends on whether a node is on a gridline or not.\label{fig:intervalTree}}
\end{figure}

For verified computation, we can implement a hash dictionary $D_{\Eq,h}$ using an associative data structure $D_\R$ for intervals. More precisely, $D_\R$ stores pairs $(I, v)$ where $I$ is an interval and $v$ a value. Given an interval $I'$, $D_{\R}$ allows quickly looking up us all pairs $(I, v)$ where $I$ and $I'$ overlap. The standard implementation of such a data structure\footnote{Usually, there are terms to distinguish an abstract data structure with a particular behavior (for example, a map) from an implementation (for example, a hash table). However, there seems to be no term for an associative data structure for intervals that is not alluding to its implementation as tree.} is an interval tree; see Figure~\ref{fig:intervalTree}.\label{sec:intervalDict}

To insert a new key-value pair $(k,v)$ into the dictionary $D_{\Eq,h}$, we add $(h(k), (k,v))$ to $D_\R$. To look-up a key $k'$ in $D_{\Eq,h}$, first look-up all key-value pairs $(I,(k,v))$ from $D_\R$ where $I$ overlaps $h(k')$. We now only need to evaluate the predicate $\Eq(k,k')$ for those pairs to find a match.

\subsection{Quotient dictionary} \label{sec:quotientDict}

A \emphasisText{quotient dictionary} $(X/\sim)\rightharpoonup Y$ can be constructed from a dictionary $X\rightharpoonup Y$ and a choice function $c\from  X\to X$ picking a canonical representative of the equivalence class of $X$. Note that intervals might not suffice to determine a unique canonical representative; see Section~\ref{sec:ambiChoice} for an example. Thus, we allow the choice function to return a set of candidates that has to include the ``true'' canonical representative (that is mathematically well-defined but hard to isolate among the candidates computationally).

Assume we look-up a key represented by some $x\in X$. In the underlying dictionary  $X\rightharpoonup Y$, we have to perform the look-up for each candidate returned by $c(x)$. Similarly when adding a key. If we want to be able modify the value of an existing key later, we store the value in the underlying dictionary by reference. This way, modifying the value of a key-value pair affects all key-value pairs in the same equivalence class.

\section{Set of lifted tetrahedra} \label{sec:setLiftedTets}

\subsection{Set of lifted tetrahedra}

Recall that the tiling algorithm (Algorithm~\ref{algorithm:tilingAny}) requires a set $S$ of lifted tetrahedra $mT_t$ in $\Gamma_K\backslash\H^3$. We can encode a lifted tetrahedron $mT_t$ as pair $(m, t)$ with $m\in\Gamma_K\backslash\Gamma$ and $t=0,\dots,{n-1}$. Thus, $S$ can be implemented as a subset of $(\Gamma_K\backslash\Gamma)\times\{0,\dots,n-1\}$.

We use that a subset of $A$ is equivalent to a dictionary $A\rightharpoonup Y$ where $Y$ is any one-element set. We also use currying to implement a dictionary $A\times B\rightharpoonup Y$ as $A\rightharpoonup B \rightharpoonup Y$. This reduces the problem to implementing a dictionary $\Gamma_K\backslash\Gamma\rightharpoonup Y$.

\subsection{Dictionary for discrete subsets of $\H^3 \cup \posLightCone$} \label{sec:hyperbolicDict} \label{sec:equalityPred}

Let $X$ be a subset of $\H^3 \cup \posLightCone$ such that there is a positive lower bound for $-x\cdot y$ with $x, y\in X$ and $x\not = y$. An example is $X=\Gamma p$ for a fixed $p\in\H^3$.

Let $b$ be such lower bound or an interval containing such a lower bound. We obtain an equality predicate $\Eq_b(x,y)$ for elements in $X$ suitable for verified computations as follows:
\begin{algorithmSteps}
\item If $-x\cdot y < b$: Return true.
\item If $-x\cdot y \geq b$: Return false.
\item Otherwise: Fail.
\end{algorithmSteps}
To avoid constant failure of $\Eq_b(x,y)$, the interval $b$ should not contain $0$ or the infimum of $-x\cdot y$.

Fix $\mu\in\R^{1,3}\setminus\{0\}$. Then a hash function is given by $\pi\from\H^3 \cup \posLightCone\to\R, x\mapsto x\cdot \mu$.

Let us denote by $D_{\H^3 \cup \posLightCone, b}$ the hash dictionary with equality predicate $\Eq_b(x,y)$ and hash function $\pi$; see Section~\ref{sec:defHashDict}.

\subsection{Dictionary $\Gamma\rightharpoonup Y$} \label{sec:gammaDict}

Let $\Gamma\subset\SO(1,3)$ be the discrete group associated with a developed fundamental polyhedron $P=\cup T_t$. Pick a tetrahedron $T_t$. Let $p\in\H^3$ and $r>0$ be the incenter and inradius of $T_t$. We convert the key using the map $\Gamma\to\H^3, g\mapsto gp$. We can then implement $D_\Gamma\from\Gamma\rightharpoonup Y$ using the dictionary $D_{\H^3 \cup \posLightCone, b}$ with $b=\cosh r$. 

\subsection{Dictionary $\Gamma_K\backslash\Gamma\rightharpoonup Y$}

\subsubsection{If $K$ is a point} \label{sec:casePoint}

Since $\Gamma_K$ is trivial, we can simply use the above dictionary $D_\Gamma\from\Gamma\rightharpoonup Y$ as dictionary $D_{\Gamma_K\backslash\Gamma}\from\Gamma_K\backslash\Gamma\rightharpoonup Y$.

\subsubsection{If $K$ is a horoball $B(l)$} \label{sec:dictHoroball}

To implement the dictionary $D_{\Gamma_K\backslash\Gamma}\from\Gamma_K\backslash\Gamma\rightharpoonup Y$, we again use $D_{\H^3\cup\posLightCone, b}$. This time, we convert a representative in $\Gamma$ of a key in $\Gamma_K\backslash\Gamma$ using the map $\Gamma\to\posLightCone, g\mapsto g^{-1}l$.

We compute a lower bound $b$ for $D_{\H^3 \cup \posLightCone,b}$ as follows. Consider the cusp neighborhood given by $B(l)$. Recall from Section~\ref{sec:addedCuspCrossSec} that $l$ is equal to a vertex $\vec{v}_t^f$ of a tetrahedron $T_t$. Reuse the cusp cross section used to scale the vertices for the corresponding cusp. Recall from Section~\ref{sec:stdFormArgs} the conditions for a cusp neighborhood to be embedded and intersect $\myTrig$ in standard form. Let $s$ be the largest scaling factor such that these conditions still apply. Set $b=s^2$. 

To see this suffices, we can equivalently set $b=1$ and scale $B(l)$ by $1/s$ such that the corresponding cusp neighborhood is embedded. Thus, if $B(l)$ and $B(l')$ are two different lifts of the cusp neighborhood, they are disjoint. This is equivalent to to $d(B(l),B(l'))\geq 0$ and thus $-l\cdot l' \geq 2>b$ by Section~\ref{sec:hypDistances}.

\begin{remark} \label{rem:bootstrap}
Finding $s$ can be regarded as a boot-strapping process to compute the maximal cusp area matrix $A_M$. That is, we use cusp neighborhoods in standard form to find some lower bound on the diagonal of the matrix. We use this to decide whether two tiles are distinct when tiling to compute the maximal cusp area matrix $A_M$.
\end{remark}

\subsubsection{If $K$ is a line $x_0x_1$}

Let $p\in\H^3$ and $b=\cosh r$ as in Section~\ref{sec:gammaDict}. We implement the dictionary $D_{\Gamma_K\backslash\Gamma}\from\discretionary{}{}{}\Gamma_K\backslash\Gamma\rightharpoonup Y$ as quotient dictionary (defined in Section~\ref{sec:quotientDict}) of $D_{\H^3 \cup \posLightCone, b}$. To convert the keys to $\H^3$, we again use the map $\Gamma\to\H^3, g\mapsto gp$. As choice function, we use $\pi_K\from \H^3\to\H^3$ defined as follows.

A fundamental domain of $\Gamma_K\backslash \H^3$ is bounded by two planes perpendicular to $K$ with one plane intersecting $K$ in the midpoint $(x_0+x_1)\postNorm$ and the other in a point closer to $x_1$. Let $h$ be the generator of $\Gamma_K\cong\Z$ with attractive fixed-point $x_1$. The choice function applies $h$ repeatedly to move a point into the fundamental domain.

For this, let $\lambda$ be the real length of the closed geodesic in $M$ corresponding to $K$. The function choice is given by
\[
\pi_K\from \H^3\to\H^3, x\mapsto h^{-\lfloor d(x)\rfloor} x\quad\mbox{where}\quad d(x)=\frac{d_{x_0x_1}\big((x_0+x_1)\postNorm, x\big)}{\lambda}.
\] \label{sec:ambiChoice}
We refer back to Section~\ref{sec:hypDistances} for evaluating $d(x)$. Given an interval estimate $[\underline{d}, \overline{d}]$ for $d(x)$, $\pi_K$ returns all $h^{-i} x$ where $i$ is an integer between $\lfloor \underline{d}\rfloor$ and $\lfloor \overline{d}\rfloor$.
\begin{remark}
We can also implement $D_{\Gamma_K\backslash\Gamma}$ using $D_{\H^3\cup\posLightCone, b}$ without using a quotient dictionary (similar to the case where $K$ is a horoball). For this, let $\pi_\text{Klein}\from \H^3\cup\posLightCone \to \R^{3,1}, (t,x,y,z)\mapsto (x/t,y/t,z/t,1)$ be the projection to the Klein model. We convert a representative of a key in $\Gamma_k\backslash\Gamma$ using the map
\[
\Gamma\to \H^3, m\mapsto \Big(\pi_{\partial B^3}\left(m^{-1}x_0\right) + \pi_{\partial B^3}\left(m^{-1}x_1\right)\Big)\postNorm.
\]
That is, we consider the endpoints of the transformed geodesic $m^{-1}K$ in the Klein model, take the Euclidean midpoint\oxford{} and project back to $\H^3$. We use $b=\cosh(r)$ where $r$ is the embedding size of a tube about the geodesic $K$ in $M$.\\
However, unlike in Remark~\ref{rem:bootstrap}, we cannot boot-strap this process since we compute such an embedding size using the tiling algorithm which needs the dictionary $D_{\Gamma_K\backslash\Gamma}\from\Gamma_K\backslash\Gamma\rightharpoonup Y$.
\end{remark}

\section{Computing distances in the manifold} \label{sec:embeddingSizes}

\subsection{Distance between standard geometric objects} \label{sec:distStandard}

Let $K$ and $K'$ be standard geometric objects for the hyperbolic 3-manifold $M$ such that $K$ and $K'$ correspond to the same cusp neighborhood in $M$ if they correspond to the same cusp of $M$. In this section, we discuss how to use the tiling algorithm to compute the distance $d_M(K,K')$ between these two objects in $M_\text{filled}$. Here, we again use the signed distance if $K$ or $K'$ is a horoball. If $K$ and $K'$ are the same in $M_\text{filled}$, then $d_M(K,K)$ is denoting twice the embedding size of $K$.

Figure~\ref{fig:tilingExample} shows an example of how to compute embedding size $d_M(x,x)$ of $x$ (or, equivalently, the infectivity radius about $x$).

More generally, $d_M(K,K')$ is given by the minimal distance between any lifts $m^{-1}K$ and $m'^{-1}K'$ with $m, m'\in\Gamma$ and $m^{-1}K$ and $m'^{-1}K'$ not the same as subsets of $\H^3$ (which is automatic if $K$ and $K'$ are different in $M$).

We introduce notation to handle multiple standard geometric objects simultaneously. Let $(r_0, m_0T_{t_0}),\allowbreak (r_1, m_1T_{t_1}), \dots, (r_{i-1}, m_{i-1}T_{t_{i-1}})$ be the first $i$ tiles emitted by the tiling algorithm (Algorithm~\ref{algorithm:tilingAny}) for the standard geometric object $K$. We denote the tiling radius and the lifted objects for the tetrahedron $T_t$ by
\[
r\big[K\big]_i=r_i\quad\mbox{and}\quad\Tiles\big[K\big]_{i,t}=\left\{ m_j^{-1} K\quad\text{where $j=0,\dots,i-1$ and $t_j=t$}\right\}.
\]
By definition, $\Tiles\big[K\big]_{i,t}$ includes all lifts $m^{-1}K$ whose neighborhood $\overline{B}_{r[K]_i}(m^{-1}K)$ intersects $T_t\subset\H^3$. For example, Figure~\ref{fig:tilingExample} shows $\Tiles\big[K\big]_{3,0}$ and $\Tiles\big[K\big]_{3,1}$. Here, we consider $\overline{B}_{r[K]_i}(m^{-1}K)$ empty unless $r\big[K\big]_i>d(K,\H^3)$. We have $d(K,\H^3)=0$ if the distance is unsigned and $d(K,\H^3)=-\infty$ otherwise (that is, if $K$ is a horoball). 

If $K$ and $K'$ are the same in $M$, we assume that we run the tiling algorithm only once for $K$, use its output for both $K$ and $K'$ and always have $i=i'$. We define:
\begin{align*}
d^t_{i,i'}(K, K') & = \min \left\{ d(A,A') \from  A\in \Tiles\big[K\big]_{i,t}~,~A'\in \Tiles\big[K'\big]_{i',t}~,~A\ne A'\right\}\\
d_{i,i'}(K, K') &= \min \left\{ d^t_{i,i'}(K, K') \from  t = 0, \dots, n-1\right\}
\end{align*}
Here, $A\not=A'$ is automatic if $K$ and $K'$ are distinct objects in $M$. Otherwise, $A\not=A'$ means that $A=m_j^{-1} K$ and $A'=m_{j'}^{-1} K$ come from elements with distinct indices $j$ and $j'$ in the stream of the tiling algorithm.

\begin{proposition} \label{prop:distEstimates}
Let $P$ be a developed fundamental polyhedron of a hyperbolic $3$-manifold $M$ that can be completed to $M_\text{filled}$ by attaching circles. Let $K$ and $K'$ be standard geometric objects. If $K$ and $K'$ correspond to the same cusp, we require that they are the same cusp neighborhood in $M$. If $K$ and $K'$ are the same in $M_\text{filled}$, we require that $i=i'$. Assume we use the tiling algorithm to compute $r\big[K\big]_i, r\big[K'\big]_{i'}$ and $d_{i,i'}(K,K')$ as above. If $r\big[K\big]_i>d(K,\H^3)$ and $r\big[K'\big]_{i'}>d(K',\H^3)$, we have for the distance $d_M(K,K')$ between $K$ and $K'$ in $M_\text{filled}$:
\[
\min\left\{d_{i,i'}(K,K'), r\big[K\big]_i + r\big[K'\big]_{i'}\right\} \leq d_M(K,K')\leq d_{i,i'}(K,K')
\]
In particular, if $r\big[K\big]_i + r\big[K'\big]_{i'} \leq d_{i,i'}(K,K')$, then \[ B_{r\big[K\big]_i}(K)\quad\text{and} \quad B_{r\big[K'\big]_i'}(K')\] are embedded (if $K$ and $K'$ are the same in $M$) or disjoint in $M$ (otherwise).\\
If $r\big[K\big]_i + r\big[K'\big]_{i'} \geq d_{i,i'}(K,K')$, then $d_M(K,K')=d_{i,i'}(K,K')$.
\end{proposition}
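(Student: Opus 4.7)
The plan is to prove the sandwich as two separate inequalities. The upper bound $d_M(K,K') \leq d_{i,i'}(K,K')$ is essentially tautological: every pair $(A,A')$ contributing to the minimum defining $d^t_{i,i'}(K,K')$ consists of two distinct lifts $A = m_j^{-1}K$ and $A' = (m_{j'}')^{-1}K'$ in $\H^3$, so $d_M(K,K') \leq d(A,A')$ by the definition of the quotient distance, and minimizing over such pairs yields the bound.

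For the lower bound it suffices to show: whenever $d_M(K,K') < r\big[K\big]_i + r\big[K'\big]_{i'}$, one has $d_M(K,K') \geq d_{i,i'}(K,K')$. Fix $\varepsilon > 0$ and choose $m \in \Gamma$ with $mK' \neq K$ as subsets of $\H^3$ and $\ell := d(K, mK') < \min\bigl(d_M(K,K')+\varepsilon,\, r\big[K\big]_i + r\big[K'\big]_{i'}\bigr)$. The neighborhood triangle inequality yields $d\bigl(B_{r[K]_i}(K),\, B_{r[K']_{i'}}(mK')\bigr) = \ell - r\big[K\big]_i - r\big[K'\big]_{i'} < 0$, so I would pick a point $p$ in the open intersection of these two (signed) neighborhoods; after a small generic perturbation I may assume $p$ lies in the interior of a unique tetrahedron $m_p T_{t_p}$ in the $\Gamma$-tessellation of $\H^3$ by copies of $P = \bigcup T_t$.

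Now the coincidence step. Because the $K$-tiling covers $p \in \overline{B}_{r[K]_i}(K)$ modulo $\Gamma_K$, some emitted tile $(r_j, m_jT_{t_j})$ with $j<i$ satisfies $m_jT_{t_j} = m_p T_{t_p}$ up to an element of $\Gamma_K$; likewise the $K'$-tiling covers $m^{-1}p$, producing a tile $(r_{j'}', m_{j'}'T_{t_{j'}'})$ with $j'<i'$ such that $m\,m_{j'}'\,T_{t_{j'}'} = m_p T_{t_p}$ up to $\Gamma_{K'}$. Comparison in $\H^3$ forces $t_j = t_{j'}' =: t$ and, after absorbing the stabilizer factors, $m_{j'}' \equiv m^{-1} m_j$. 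Setting $A = m_j^{-1}K \in \Tiles\big[K\big]_{i,t}$ and $A' = (m_{j'}')^{-1}K' \in \Tiles\big[K'\big]_{i',t}$, a short calculation using that $\Gamma_K$ fixes $K$ setwise and $\Gamma_{K'}$ fixes $K'$ setwise gives $d(A,A') = d(K, mK') = \ell$; moreover $A \neq A'$, since otherwise $m$ would lie in $\Gamma_K$ and thus $mK' = K$, contradicting the choice of $m$. Hence $d_{i,i'}(K,K') \leq \ell < d_M(K,K') + \varepsilon$, and letting $\varepsilon \to 0$ closes the lower bound; the two stated consequences follow at once by comparing $d_{i,i'}(K,K')$ with $r\big[K\big]_i + r\big[K'\big]_{i'}$.

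The step I anticipate as the main obstacle is the stabilizer bookkeeping in the coincidence argument: the two tilings enumerate cosets $\Gamma_K\backslash\Gamma$ and $\Gamma_{K'}\backslash\Gamma$ independently, and one must verify carefully that the chosen coset representatives line up inside $\H^3$ well enough to yield the equality $d(A,A') = \ell$ and that $A \neq A'$ persists in the case $K = K'$ in $M$ (where a single tiling stream is shared between $K$ and $K'$ and the forced index inequality $j \neq j'$ must be extracted from $m \notin \Gamma_K$). A secondary subtlety is that when $K$ or $K'$ is a horoball, the relevant distance is signed and no minimizing geodesic segment between overlapping horoballs exists in the pointwise sense, so the argument is cleanest when phrased entirely through the neighborhood sets $B_\rho(\cdot)$ and their signed-distance triangle inequality rather than through a minimizing endpoint.
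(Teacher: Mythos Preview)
Your proposal is correct and follows essentially the same strategy as the paper: the upper bound is immediate, and for the lower bound you assume the sum of tiling radii exceeds $d_M(K,K')$, find a point $p$ in the intersection of the two neighborhoods, locate it in a lifted tetrahedron, and invoke the covering property to exhibit the two lifts in $\Tiles[K]_{i,t}$ and $\Tiles[K']_{i',t}$.

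The one simplification worth noting: the paper avoids the stabilizer bookkeeping you flagged as the main obstacle by first translating so that $p$ lies in the fundamental polyhedron $P$ itself (i.e., taking $m_p = \Id$). With that normalization, the covering statement ``$\overline{B}_{r[K]_i}(m^{-1}K)$ meets $T_t$ implies $m^{-1}K \in \Tiles[K]_{i,t}$'' becomes a direct consequence of the definition, and no explicit coset-matching between the two tilings is needed. The paper also assumes the infimum $d_M(K,K')$ is realized by some pair of lifts rather than using your $\varepsilon$-approximation, which is harmless here since $\Gamma$ is discrete. Your more explicit handling of the $\Gamma_K$- and $\Gamma_{K'}$-cosets and the $j \neq j'$ index check in the $K=K'$ case are correct but unnecessary once $p \in P$.
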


\begin{proof}
$d_M(K,K')$ and $d_{i,i'}(K,K')$ is the infimum across all lifts and across a subset of all lifts, respectively. Thus, $d_M(K,K')\leq d_{i,i'}(K,K')$.

If $r\big[K\big]_i + r\big[K'\big]_{i'}\leq d_M(K,K')$, $\min\left\{d_{i,i'}(K,K'), r\big[K\big]_i + r\big[K'\big]_{i'}\right\}\leq d_M(K,K')$ immediately follows. So let us assume that $r\big[K\big]_i+r\big[K'\big]_{i'}>d_M(K,K')$. Pick lifts $m^{-1}K$ and ${m'}^{-1}K'$ with $d(m^{-1}K, {m'}^{-1}K') = d_M(K,K')$. Since $d(m^{-1}K, {m'}^{-1}K') < r\big[K\big]_i+r\big[K'\big]_{i'}$, $B_{r[K]_i}(m^{-1}K)$ and $B_{r[K']_{i'}}({m'}^{-1}K')$ intersect. Let $p$ be a point in the intersection. Without loss of generality, we can assume that $p\in P$, say in tetrahedron $T_t$. Since $B_{r[K]_i}(m^{-1}K)$ intersects $T_t$, $m^{-1}K$ is contained in $\Tiles\big[K\big]_{i,t}$. Similarly for ${m'}^{-1}K'$ and $\Tiles\big[K'\big]_{i',t}$. Thus $d^t_{i,i'}(K,K')\leq d(m^{-1}K, {m'}^{-1}K')=d_M(K,K')$. By definition, $d_{i,i'}(K,K')\leq d^t_{i,i'}(K,K')$. Thus, we have $d_{i,i'}(K,K')\leq d_M(K,K')$.
\end{proof}

Note that $d_{i,i'}(K,K')$ is an upper bound for $d_M(K,K')$. Recall from Section~\ref{sec:intConv} that, thus, $d_M(K,K')\leq\overline{I}$ where $I$ is the interval computed for $d_{i,i'}(K,K')$. Analogously, for the lower bound for $d_M(K,K')$.

We can now compute $d_M(K,K')$ as follows: First take enough tiles from the tiling algorithms for $K$ and $K'$ to ensure $r\big[K\big]_i>d(K,\H^3)$ and $r\big[K\big]_{i'}>d(K',\H^3)$, respectively. Then continue taking tiles until $r\big[K\big]_i+r\big[K'\big]_{i'} > d_{i,i'}(K, K')$. Now $d_M(K,K')$ is given by $d_{i,i'}(K,K')$. When using intervals, this means that $d_M(K,K')$ is contained in the interval computed for $d_{i,i'}(K,K')$.

If $K$ and $K'$ are distinct in $M_\text{filled}$, we can take tiles from for either just $K$ or just $K'$ or alternate at each step to not give preference to one or the other. The latter seems to be performing best.

\subsection{The maximal cusp area matrix} \label{sec:compMaxCuspAreaMatrix}

Let $M$ be a complete hyperbolic 3-manifold with ideal geometric triangulation $\myTrig$. Pick a cusp cross section for each cusp to scale the vertices $\vec{v}_t^v$ of the developed fundamental polyhedron $P$ as described in Sections~\ref{sec:cuspCrossSec} and~\ref{sec:addedCuspCrossSec}. Let $K$ and $K'$ be a vertex $\vec{v}_t^v$ of $P$ corresponding to cusp $i$ and $j$, respectively. Let $A(C_i)$ and $A(C_j)$ be the area of the chosen cusp cross section for cusp $i$ and $j$, respectively. Then, the corresponding entry in the maximal cusp area matrix is given by:
\[
A_{ij}= e^{2d_M(K,K')} A(C_i)A(C_j)
\]

\subsection{Lower bound for disjoint and embedded neighborhoods} \label{sec:lowerBoundSystem}

As an immediate consequence of Proposition~\ref{prop:distEstimates}, we can also obtain a lower bound $r$ for neighborhoods of a system of standard objects to be embedded and disjoint. We use it in \cite{goerner:drilling} to compute a safe perturbation distance for a system of geodesics. Here, safe means that perturbing up that distance does not change the isotopy type of the system.

\begin{corollary}
Let $K_0,\dots, K_{s-1}$ be standard geometric objects corresponding to different objects in the hyperbolic 3-manifold $M$. Let us run the tiling algorithm for each $K_j$ for $i_j$ steps until $r\big[K_j\big]_{i_j} > d(K_j, \H^3)$. Let
\[
d^t=\min\left\{d(A,A'): A,A' \in \Tiles\big[K_0\big]_{i_0,t} \cup \dots \cup\Tiles\big[K_{s-1}\big]_{i_{s-1},t} , A\ne B\right\}
\]
be the minimum distance between any two distinct lifts $m^{-1}K_j$ the tiling algorithms have produced for the tetrahedron $T_t$. Then, the following gives a lower bound $r$ for the neighborhoods $B_r(K_j)$ to be embedded and disjoint in M:
\[
r=\frac{1}{2}\min\left\{d^0,\dots,d^{n-1},r\big[K_0\big]_{i_0},\dots,r\big[K_{s-1}\big]_{i_{s-1}} \right\}
\]
\end{corollary}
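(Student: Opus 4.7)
The plan is to reduce the Corollary directly to Proposition~\ref{prop:distEstimates} applied to every pair $(K_j, K_{j'})$, including the diagonal case $j=j'$ which controls embeddedness. The key observation is that the quantity $d^t$ is computed over a \emph{superset} of the pairs used to define $d^t_{i_j,i_{j'}}(K_j, K_{j'})$ in Section~\ref{sec:distStandard}: $d^t$ minimizes $d(A,A')$ over all distinct lifts in $\Tiles[K_0]_{i_0,t}\cup\dots\cup\Tiles[K_{s-1}]_{i_{s-1},t}$, while $d^t_{i_j,i_{j'}}(K_j,K_{j'})$ only looks at pairs with $A\in\Tiles[K_j]_{i_j,t}$ and $A'\in\Tiles[K_{j'}]_{i_{j'},t}$. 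Thus $d^t_{i_j,i_{j'}}(K_j,K_{j'})\geq d^t$ for every $t$, and taking the minimum over $t$ gives $d_{i_j,i_{j'}}(K_j,K_{j'})\geq \min_t d^t$.

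Now fix any pair of indices $j, j'$. By assumption $r[K_j]_{i_j}>d(K_j,\H^3)$ and $r[K_{j'}]_{i_{j'}}>d(K_{j'},\H^3)$, so the hypotheses of Proposition~\ref{prop:distEstimates} are satisfied (in the case $j=j'$, we are running the tiling algorithm only once and take $i=i'=i_j$ as required). The proposition then yields
\[
d_M(K_j, K_{j'}) \;\geq\; \min\bigl\{\,d_{i_j,i_{j'}}(K_j,K_{j'}),\; r[K_j]_{i_j}+r[K_{j'}]_{i_{j'}}\,\bigr\}.
\]
Combining with the inequality from the previous paragraph,
\[
d_M(K_j, K_{j'}) \;\geq\; \min\bigl\{\,d^0,\dots,d^{n-1},\; r[K_j]_{i_j}+r[K_{j'}]_{i_{j'}}\,\bigr\} \;\geq\; 2r,
\]
where the last step uses that $r[K_j]_{i_j}\geq r$ and $r[K_{j'}]_{i_{j'}}\geq r$ by the definition of $r$, so $r[K_j]_{i_j}+r[K_{j'}]_{i_{j'}}\geq 2r$, and each $d^t\geq 2r$.

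From $d_M(K_j, K_{j'})\geq 2r$ for all $j, j'$, the neighborhoods $B_r(K_j)$ and $B_r(K_{j'})$ are disjoint in $M$ when $j\ne j'$, and $B_r(K_j)$ embeds in $M$ when $j=j'$ (since $d_M(K_j, K_j)$ is twice the embedding size of $K_j$ by the convention in Section~\ref{sec:distStandard}). The only subtlety I foresee is the diagonal case $j=j'$ when $K_j$ is a horoball: here the signed distance $d_M(K_j,K_j)$ can be negative a priori, but the bound $d_M(K_j,K_j)\geq 2r$ gives a nonnegative lower bound once $r\geq 0$, and the same computation shows $B_r(K_j)$ embeds. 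No other real obstacle is expected; the proof is essentially a bookkeeping exercise combining Proposition~\ref{prop:distEstimates} with the monotonicity of the minimum under enlarging the index set.
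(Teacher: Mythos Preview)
Your proof is correct and follows essentially the same route as the paper: apply Proposition~\ref{prop:distEstimates} to each pair $(K_j,K_{j'})$, observe that $d^t$ bounds every $d^t_{i_j,i_{j'}}(K_j,K_{j'})$ from below (the paper phrases this as ``$d^t$ is the minimum of all $d^t_{i_j,i_{j'}}(K_j,K_{j'})$''), and bound $r[K_j]_{i_j}+r[K_{j'}]_{i_{j'}}$ from below by $2r$ (the paper phrases this as ``the minimum of $r[K_j]_{i_j}$ and $r[K_{j'}]_{i_{j'}}$ is smaller than their average'').
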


\begin{proof}
$d^t$ is the minimum of all $d^t_{i_j,i_{j'}}(K_j,K_{j'})$ with $j,j'=0,\dots,s-1$. We use that the minimum of $r\big[K_j\big]_{i_j}$ and $r\big[K_{j'}\big]_{i_{j'}}$ is smaller than their average.
\end{proof}

\section{Cusp areas and 6-Theorem} \label{sec:revisitSix}

\subsection{Cusp areas}

In this section, we discuss how to find the areas of maximal embedded and disjoint cusp neighborhoods from the maximal cusp area matrix $A_M$ (see Definition~\ref{def:maxCuspAreaMatrix}). Different choices of such neighborhoods are possible if there are multiple cusps. These choices include:
\begin{itemize}
\item The \emphasisText{greedy choice}: grow a preferred neighborhood until it bumps into itself; then grow the neighborhood about the second preferred cusp until it bumps either into the first neighborhood or into itself; and so on.
\item The \emphasisText{unbiased choice}: grow all neighborhoods simultaneously with each neighborhood stopping when it bumps into itself or another neighborhood.
\end{itemize}

\begin{algorithm}[ht]
\begin{tabular}{rp{15.4cm}}
{\bf Input:} & Symmetric $n\times n$-matrix $A=(A_{ij})$ with $A_{ij}\in\R^+$.\\
{\bf Output:} & Unbiased maximal vector $a=(a_0,\dots,a_{n-1})$ with $a_i\in\R^+$ such that $aa^T\leq A$ (element-wise).\\
{\bf Algorithm:}
\end{tabular}
\begin{algorithmSteps}
\item[a] $n$-vector $(a_0,\dots,a_{n-1})$ with $a_i\in\R^+\cup\{t\}$ where $t$ is a formal variable.
\item If not $A_{ij}>0$ for all $i, j=0,\dots,n-1$: Fail.
\item Initialize $a=(t,\dots, t)$.
\item While $a$ has a $t$:
\begin{algorithmSteps}
\item For each $i'=0,\dots,n-1$ with $a_{i'}=t$ and for each $j'=0,\dots,n-1$:
\begin{algorithmSteps}
\item $t_{i'j'}\leftarrow
\begin{cases}
\sqrt{A_{i'j'}} & \text{if}~a_{j'}=t\\
A_{i'j'}/a_{j'} & \text{otherwise}.\\
\end{cases}
$
\end{algorithmSteps}
\item Let $\underline{t_\text{next}}$ be the value of and $i$ and $j$ be the index of the smallest $\underline{t_{i'j'}}$ (pick any if tied).
\item Let $\overline{t_\text{next}}$ be the smallest among the upper bounds for
$$
\begin{cases}
\sqrt{A_{ii}} & \text{if}~j'=i\\
A_{ij'}/\underline{t_\text{next}} & \text{if}~j'\ne i~\text{and}~a_{j'}=t\\
A_{ij'}/a_{j'} & \text{otherwise}\\
\end{cases}\qquad\text{where}~j'=0,\dots,n-1.
$$
\item $a_i\leftarrow \left[\underline{t_\text{next}},\overline{t_\text{next}}\right]$
~~~~~[Note: $a_i \leftarrow \underline{t_\text{next}}$ when using exact arithmetic]
\end{algorithmSteps}
\item Return $a$.
\end{algorithmSteps}
\caption{Verified unbiased maximal cusp areas from cusp area matrix. \label{algo:unbiasedChoice}}
\end{algorithm}

An interval version of the unbiased choice is implemented by Algorithm~\ref{algo:unbiasedChoice}. By the following proposition, the result is correct for intervals even though the intervals for the input matrix $A$ might not completely determine the order in which neighborhoods bump into each other and the algorithm might fill the vector $a$ in an order different from the true order:

\newpage

\begin{proposition}
Algorithm~\ref{algo:unbiasedChoice} fulfills the inclusion principle (see Definition~\ref{def:inclusionPrinciple}).
\end{proposition}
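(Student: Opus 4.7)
The plan is induction on the iteration count of the outer while loop of Algorithm~\ref{algo:unbiasedChoice}. Fix an exact matrix $A^*=(A^*_{ij})$ with each $A^*_{ij}$ contained in the corresponding input interval $A_{ij}$, and let $a^*=(a^*_0,\dots,a^*_{n-1})$ be the output of the algorithm run in exact arithmetic on $A^*$. From the unbiased description of $a^*$, for each index $j'$ there exists a ``binding'' index $\kappa=\kappa(j')$ such that $a^*_{j'}$ equals $\sqrt{A^*_{j'\kappa}}$ if $a^*_{\kappa}\geq a^*_{j'}$ and equals $A^*_{j'\kappa}/a^*_{\kappa}$ otherwise. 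The inductive hypothesis at the start of iteration $k$ is that every interval $a_j$ already set by the interval algorithm contains $a^*_j$. The goal is to show that the interval $[\underline{t_\text{next}},\overline{t_\text{next}}]$ assigned in iteration $k$ to the selected index $i$ contains $a^*_i$.

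The heart of the argument is the lemma that for \emph{every} index $j'$ still unassigned at the start of iteration $k$, the floating-point number $\underline{t_\text{next}}$ satisfies $\underline{t_\text{next}}\leq a^*_{j'}$. A short case split on whether $a^*_{\kappa}\geq a^*_{j'}$ or $a^*_{\kappa}<a^*_{j'}$ yields the two unconditional bounds $\sqrt{A^*_{j'\kappa}}\leq a^*_{j'}$ and $A^*_{j'\kappa}/a^*_{\kappa}\leq a^*_{j'}$ (both sides equal $a^*_{j'}$ in the case matching their definition, and the other is strictly smaller). A second case split on whether the interval algorithm has already assigned $\kappa$ then shows that the computed interval $t_{j'\kappa}$ contains either $\sqrt{A^*_{j'\kappa}}$ (if $\kappa$ is still unassigned, by the inclusion principle for square roots) or $A^*_{j'\kappa}/a^*_{\kappa}$ (if $\kappa$ is already assigned, by the inductive hypothesis $a^*_{\kappa}\in a_{\kappa}$ together with the inclusion principle for division). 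Either way $\underline{t_{j'\kappa}}\leq a^*_{j'}$, and taking the minimum over all such lower bounds gives $\underline{t_\text{next}}\leq a^*_{j'}$.

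Applying the lemma with $j'=i$ establishes the left-endpoint inclusion $\underline{t_\text{next}}\leq a^*_i$. For the right-endpoint inclusion, I will verify $\mathrm{UB}(j')\geq a^*_i$ in each of the three cases of the algorithm's definition of $\overline{t_\text{next}}$, which immediately gives $\overline{t_\text{next}}=\min_{j'}\mathrm{UB}(j')\geq a^*_i$. When $j'=i$, $\mathrm{UB}(i)$ overestimates $\sqrt{A^*_{ii}}\geq a^*_i$. When $j'\neq i$ and $a_{j'}$ is already assigned, the inductive hypothesis combined with the constraint $a^*_i a^*_{j'}\leq A^*_{ij'}$ gives $\mathrm{UB}(j')\geq A^*_{ij'}/a^*_{j'}\geq a^*_i$. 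When $j'\neq i$ and $a_{j'}=t$, the lemma applied to $j'$ gives $\underline{t_\text{next}}\leq a^*_{j'}$, so $\mathrm{UB}(j')\geq A^*_{ij'}/\underline{t_\text{next}}\geq A^*_{ij'}/a^*_{j'}\geq a^*_i$, again by the constraint.

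The main technical obstacle is the lemma, because the interval algorithm can legitimately fix indices in a different order than the exact algorithm when the input intervals fail to separate competing candidates; a naive argument trying to align the two orders collapses. The saving observation is that the defining property of the binding index forces $\sqrt{A^*_{j'\kappa}}\leq a^*_{j'}$ and $A^*_{j'\kappa}/a^*_{\kappa}\leq a^*_{j'}$ \emph{simultaneously}, so whichever of the two formulas the interval algorithm actually uses for $t_{j'\kappa}$ is bounded above by $a^*_{j'}$ regardless of which indices the interval algorithm has already locked in.
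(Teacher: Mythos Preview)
Your proof is correct and follows the same inductive strategy as the paper's: assume the already-assigned intervals contain the true values, then show that $\underline{t_\text{next}}$ bounds every still-unassigned $a^*_{j'}$ from below and that each candidate in the definition of $\overline{t_\text{next}}$ bounds $a^*_i$ from above. The paper merely asserts the lower-bound lemma (``$\underline{t_\text{next}}$ is a lower bound for the true value of all entries in $a$ that have not been assigned an interval yet''), whereas you supply a complete argument for it via the binding-index characterization of the unbiased choice---this fills a genuine gap in the paper's exposition.
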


\begin{proof}
Fix a symmetric matrix of positive real numbers for $A_{ij}$. We can think of such a real number as the ``true'' value of each $A_{ij}$. Execute the algorithm on this matrix with exact arithmetic to obtain the ``true'' value for each $a_i$. Consider intervals for each $A_{ij}$ that each contains the true value of $A_{ij}$. We need to show that, when given the intervals for $A_{ij}$, the intervals returned by the algorithm contain the true value of each $a_i$. We use induction on the iterations: assume that each interval entry in $a$ contains its true value, we need to show that the interval $\left[\underline{t_\text{next}},\overline{t_\text{next}}\right]$ assigned to $a_i$ contains its true value. Note that $\underline{t_\text{next}}$ is a lower bound for the true value of all entries in $a$ that have not been assigned an interval yet. For each $j'=0,\dots,n-1$, the inequality $a_ia_{j'}\leq A_{ij'}$ for the true values gives us an upper bound for $a_i$ as follows: If $j'=i$, then we know that $a_ia_i\leq A_{ii}$ for the true value, so an upper bound for $\sqrt{A_{ii}}$ is an upper bound for $a_i$. Otherwise, we obtain an upper bound for $a_i$ by dividing an upper bound for $A_{ij'}$ by a lower bound for $a_{j'}$. This lower bound is given by the left-endpoint of the interval for $a_{j'}$ if already computed earlier and $\underline{t_\text{next}}$ otherwise. $\overline{t_\text{next}}$ is the best among all these upper bounds.
\end{proof}

\subsection{Conventions for peripheral curves} \label{sec:peripheralConventions}

Let $M$ be a finite-volume, complete, orientable hyperbolic $3$-manifold. Following SnapPy conventions, we pick two peripheral curves called meridian and longitude spanning a cusp's homology for each cusp $i = 0,\dots, n-1$. 
To Dehn-fill one or multiple cusps, we give pairs $(p_i, q_i)$ of co-prime integers that give a simple peripheral curve in terms of meridian and longitude. The cusp translations $\mu_i\in\C$ and $\lambda_i\in\C$ associated with the meridian and longitude, respectively, depend on choosing a cusp neighborhood for cusp $i$. However, the cusp shape $s_i=\lambda_i/\mu_i$ is well-defined without such a choice.

\begin{example}
SnapPy can compute verified cusp shapes as follows:
\texttt{Manifold(\discretionary{}{}{}"m125").cusp\_info(\discretionary{}{}{}"shape", verified=True)}.
\end{example}

\subsection{Conditions of the 6-Theorem} \label{sec:conditionsSix}

The length of a peripheral curve measured along the boundary of a cusp neighborhood $C_i$ is given by:
\[
l_i = \sqrt{\frac{A(C_i)}{\ImPart(s_i)}} \left| p_i + q_i s_i \right| .
\]

If the cusp neighborhoods $C_i$ are disjoint and $l_i > 6$ for each cusp we fill, the Dehn-filled manifold is hyperbolic by the $6$-theorem (see \cite{Agol:Six,Lackenby:Six}) and the geometrization theorem by Perelman (see \cite{perel1,perel2}).

When classifying exceptional slopes, for example, one typically picks disjoint and embedded cusp neighborhoods first and then lists the slopes with $l_i \leq 6$.

Going in the other directions, assume we are given Dehn-fillings $(p_i,q_i)$. Compute the area $a_i$ of cusp $C_i$ such that $l_i = 6$:
\[
a_i= \begin{cases}
\ImPart(s_i) \left(\frac{6}{\left| p_i + q_i s_i\right|}\right)^2 & \text{if } (p_i,q_i)\not=(0,0) \\ 
0 & \text{otherwise.}
\end{cases}
\]
Let $a=(a_i)$ and let $A_M$ be the maximal cusp area matrix. Then, there are cusp neighborhoods such that the conditions of the $6$-Theorem apply if and only if $aa^T < A_M$ (element-wise).

In particular, all Dehn-fillings of $M$ are hyperbolic if $\alpha\alpha^T < A_M$ where $\alpha=\left(\alpha_i\right)$ and $\alpha_i$ is the maximum value of $a_i$ among all co-prime $(p_i, q_i)$.

\section{Epstein-Penner decomposition} \label{sec:epAlgorithm}

\subsection{Definition}

Let $M$ be a complete hyperbolic 3-manifold. Let $\Gamma\subset\SO(1,3)$ such that $M\cong\Gamma\backslash\H^3$. Furthermore, pick a cusp neighborhood $C_i$ for each cusp. Each cusp neighborhood lifts to horoballs $B(l)$ with $l\in\posLightCone$. Let $V\subset\posLightCone$ be the set of all $l$. Let $C$ be the convex hull of $V$ in $\R^{1,3}$. Let $Q=\partial C$ be its boundary.
Project $Q$ to $\H^3$. The Epstein-Penner decomposition (see \cite{EP}) is then defined as the quotient of the induced polyhedral decomposition of $\H^3$ by $\Gamma$. The Epstein-Penner decomposition is invariant under scaling all cusp neighborhoods by the same factor.

\subsection{Algorithm}

Algorithm~\ref{algorithm:epAlgorithm} uses the tiling algorithm to compute the Epstein-Penner decomposition. We use an exact type to represent numbers which are all algebraic as long as the input areas $A(C_i)$ are. The author has not implemented this algorithm and it is unclear how feasible it is in practice.
\begin{algorithm}[ht]
\begin{tabular}{rp{15.4cm}}
{\bf Input:} & Area $A(C_i)$ for each cusp $i$ of complete hyperbolic 3-manifold $M$.\\
& Developed fundamental polyhedron $P$ containing origin  $x=(1,0,0,0)$ for $M$.\\
{\bf Output:} & Epstein-Penner decomposition.\\
{\bf Algorithm:}
\end{tabular}
\begin{algorithmSteps}
\item Scale vertices $\vec{v}_t^v$ of $P$ such that $B\left(\vec{v}_t^v\right)$ corresponds to a cusp neighborhood of area $A(C_i)$ if vertex $v$ of $T_t$ corresponds to cusp $i$. \label{step:scaleVert}
\item Scale vertices $\vec{v}_t^v$ by the largest overall factor such that each $B\left(\vec{v}_t^v\right)$ intersects $T_t$ in standard form. \label{step:scaleToStd}
\item With an increasing number of tiles $(r_0, m_0T_{t_0}), \dots, (r_{k-1}, m_{k-1}T_{t_{k-1}})$ from Algorithm~\ref{algorithm:tilingAny} where $K=x$, try the following: \label{step:tileEP}
\begin{algorithmSteps}
\item If not $r_k>0$: Continue to next iteration.
\item Let $V_k\subset\posLightCone$ be the set of all vertices $m_i\vec{v}_{t_i}^v$ (requires de-duplication). Let $C$ be the convex hull of $V_k$ in $\R^{1,3}$ (computed using Quickhull \cite{quickhull}). Let $Q\subset\partial C$ be the subset of faces ignoring faces with outward-facing normal pointing up. Let $\pi\from  Q\to \H^3$ be the projection onto $\H^3$.
\item For each $3$-cell $U$ in $Q$ with  $\pi(U)$ intersecting $P$, consider the hyperplane $H$ in $\R^{1,3}$ supporting $U$. Let $(t,x,y,z)$ be the point in $H\cap \posLightCone$ with the largest $t$. Check that $t \leq e^{r_k}$. If any check fails: Continue to next iteration.
\item For each $3$-cell $U$ in $Q$ with $\pi(U)$ intersecting a boundary face $T^f_t$ of $P$, check that $g^f_tU$ is equal to a $3$-cell $U'$ of $Q$ and identify $U$ with $U'$. If any check fails: Continue to next iteration.
\item Return the cell decomposition obtained by all $3$-cells $U$ in $Q$ with $\pi(U)$ intersecting $P$ after the above identifications.
\end{algorithmSteps}
\end{algorithmSteps}
\caption{Epstein-Penner decomposition.\label{algorithm:epAlgorithm}}
\end{algorithm}

To see that the algorithm is correct, we need to prove that the algorithm has enumerated enough elements of $V$. That is, elements of $V\setminus V_k$ change $Q$ in regions not intersecting the fundamental polyhedron $P$. This follows from the following proposition:
\begin{proposition}
Let $l=(t,x,y,z)\in V$. If $B(l)$ intersects $B_{r_k}((1,0,0,0))$, or, equivalently $t< e^{r_k}$, then $l\in V_k$ where $V_k$ is as defined in Algorithm~\ref{algorithm:epAlgorithm}.
\end{proposition}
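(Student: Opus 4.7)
The plan is to translate the hypothesis into a geometric containment, show that the horoball $B(l)$ cannot escape the star of lifted tetrahedra around $l$, and then use the covering property of the tiling algorithm to exhibit one such tetrahedron among the emitted tiles. First I would use the distance formula $d(x,B(l)) = \log(-x\cdot l)$ from Section~\ref{sec:hypDistances} with $x=(1,0,0,0)$, noting that $-x\cdot l = t$, to rephrase $t<e^{r_k}$ as the non-emptiness of the open intersection $B(l)\cap B_{r_k}(x)$.

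Next, let $W_l = \bigcup\{mT_t : l \text{ is a vertex of } mT_t\}$ denote the union of all lifted tetrahedra of the tessellation of $\H^3$ by $\Gamma\cdot P$ that have $l$ as an ideal vertex. The key geometric claim is that $B(l)\subset W_l$. This relies on Step~\ref{step:scaleToStd} of Algorithm~\ref{algorithm:epAlgorithm}: the uniform rescaling guarantees that every horoball $B(m\vec{v}_t^v)$ meets its tetrahedron in standard form, and standard form is $\Gamma$-equivariant, so $B(l)$ meets every lifted tetrahedron with vertex $l$ in standard form. To verify the containment, I would pass to the upper half-space model with $l=\infty$; there $B(l)$ is a Euclidean half-space $\{\mathrm{Im}(z)>h\}$ and $W_l$ is the infinite vertical prism over the horosphere cross section at height~$h$. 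Standard form for each adjacent tetrahedron exactly says that the horotriangles $\partial B(l)\cap mT_t$ tile the horosphere $\partial B(l)$, so $B(l)$ is contained in that prism.

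Given the containment, I would pick a point $p \in B(l)\cap B_{r_k}(x)$ that avoids the (measure-zero) 2-skeleton of the tiling $\Gamma\cdot P$; this is possible because the intersection is a non-empty open set. Then $p$ lies in the interior of a unique lifted tetrahedron, which (by $p\in W_l$) has the form $mT_t$ with $l = m\vec{v}_t^v$ for some vertex~$v$. Finally, I would invoke the guarantee of Algorithm~\ref{algorithm:tilingAny} that the emitted tiles $m_0T_{t_0},\dots,m_{k-1}T_{t_{k-1}}$ cover $\overline{B}_{r_k}(x)$: since $p \in \overline{B}_{r_k}(x)$ lies in the interior of the unique lifted tetrahedron $mT_t$, the covering tile containing $p$ must coincide with $mT_t$. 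Hence $l = m_j\vec{v}_{t_j}^v \in V_k$ for the appropriate index $j$.

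The step I expect to require the most care is establishing $B(l)\subset W_l$ from the standard-form property, since this is the only place where the specific scaling in Steps~\ref{step:scaleVert} and~\ref{step:scaleToStd} is used in an essential way. Once that containment is in hand, the rest of the argument is a bookkeeping exercise combining the covering guarantee of the tiling algorithm with the disjoint-interiors property of the tessellation.
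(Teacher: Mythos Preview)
Your proposal is correct and follows essentially the same approach as the paper: translate $t<e^{r_k}$ into $B(l)\cap B_{r_k}(x)\neq\emptyset$ via the distance formula, use the standard-form scaling from Step~\ref{step:scaleToStd} to confine $B(l)$ to the star $W_l$ of lifted tetrahedra with vertex $l$, pick a point $p$ in the intersection, and invoke the covering guarantee of Algorithm~\ref{algorithm:tilingAny} to place the tile through $p$ among the emitted ones. Your extra care in choosing $p$ off the $2$-skeleton to force uniqueness of the containing tile is a harmless refinement of the paper's terser argument.
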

\begin{proof}
$d((1,0,0,0), B((t,x,y,z))) = \log(t) < 0$ if and only if $B(l)$ intersects $B_{r_k}((1,0,0,0))$.
Take a point $p$ in $B(l)\cap B_{r_k}((1,0,0,0))$. Since $B(l)$ intersects all lifted tetrahedra in standard form, there is some lifted tetrahedron $mT_t$ containing $p$ and having $l$ as a vertex. Thus, $mT_t$ is among the $m_0T_{t_0}, \dots, m_{k-1}T_{t_{k-1}}$. Thus, $l\in V_k$.
\end{proof}

\section{Generalization to higher dimensions} \label{sec:higherDim}

In this section, we generalize geometric triangulations and the tiling algorithm in Section~\ref{sec:tiling} to higher dimensions. The goal is to use the algorithm in Section~\ref{sec:epAlgorithm} to compute the Epstein-Penner decomposition in any dimension. Thus, we focus on ideal triangulations of cusped manifolds and the case where the object $K$ we tile about is a point.

We follow the approach of \cite[Section~1]{heardThesis} and \cite[Section~2 and~3]{matthiasVerifyingFinite} and use the same notation here. Some of the ideas go back to Casson's Geo \cite{casson:Geo} and Frigerio, Martelli\oxford{} and Petronio's ographs \cite{FMP:ographs}.

\subsection{Natural cocycle for a vertex Gram matrix} \label{sec:natCocycleGram}

We generalize \cite[Section~1]{heardThesis} and \cite[Section~2 and~3]{matthiasVerifyingFinite} to higher dimensions and apply it to ideal $n$-simplices and as follows. In \cite[Section~3]{matthiasVerifyingFinite}, $\overline{\Delta}\subset\Delta$ is replaced by $\overline{\Delta}^n\subset\Delta^n$, the $n-1$-times truncated simplex which is the permutahedron of order $n+1$. Each vertex of $\overline{\Delta}$ corresponds to a permutation $\sigma\in S^{n+1}$. We label an oriented edge $e$ of $\overline{\Delta}$ by $(\sigma, k)$ if $e$ starts at $\sigma$ and ends at $\sigma \circ (k, k+1)$. We use the hyperboloid model $\H^n\subset\R^{1,n}$ with isometry group $\SO(1,n)$. We decompose $\R^{1,n}$ as $\R^{1,n}_-\cup \R^{1,n-1}\cup\R^{1,n}_+$ where $\R^{1,n}_-=\{x\in\R^{1,n}:x_n <0 \}$, $\R^{1,n-1}\subset\R^{1,n}$ and $\R^{1,n}_+=\{x\in\R^{1,n}:x_n >0\}$. Similar to \cite[Definition~3.2]{matthiasVerifyingFinite}, we say that a positively oriented $n$-simplex $\Delta^n\subset\H^n$ with ideal vertices $\vec{v}^v\in\mathbb{L}_{+}^n$ (and thus a choice of horoballs about its vertices) is in \emphasisText{$\sigma$-standard position} if $\vec{v}^{\sigma(0)}=(1,-1,0,\dots)$, $\vec{v}^{\sigma(k)}\in\R_+^{1,k}$ for $k=1,\dots,n-1$ and $\vec{v}^{\sigma(n)}\in\R_{\sgn(\sigma)}^{1,n}$. Given an isometry class of such simplices with choices of horoballs, we define the \emphasisText{natural $\SO(1,n)$-cocycle} on $\overline{\Delta}^n$ analogously to \cite[Definition~3.3]{matthiasVerifyingFinite}.

Following \cite[Section~1.3]{heardThesis}, the isometry class of a geodesic $n-$simplex (including the choice of horoballs if ideal) is determined by the vertex Gram matrix $G$ which has entries $\vec{v}^i\cdot \vec{v}^j$. The vertex Gram matrix $G$ also determines the normal Gram matrix $G^*=-\det(G) G^{-1}$. Following \cite[Section~1.3]{heardThesis}, we can compute the dihedral angle $\theta_{i,j}$ between face $i$ and $j$ from $G^*$.

We now compute the natural cocycle $\alpha$ from $G$ for an ideal simplex. We denote by $\alpha^n_{\sigma, k}$ the matrix that $\alpha$ assigns to the edge $(\sigma, k)$ of $\overline{\Delta}^n$. Let $R_\theta\in\SO(1,n)$ be the rotation by angle $\theta$ fixing $\H^{n-2}\subset\H^n$ point-wise such that $\R_+^{1,n-1}$ is taken to $\R_+^{1,n}$ for small $\theta>0$. By symmetry, it is sufficient to give an expression for each $\alpha^n_{\Id,k}$ with $k=0,\dots,n-1$. They are given by:
\begin{itemize}
\item $\alpha^n_{\Id, n-1}=R_{\theta_{n-1,n}}$.
\item $\alpha^n_{\Id, k}=R_\pi \circ i(\alpha^{n-1}_{\Id, k})$ for $k=0,\dots, n-1$.\\
Here, $i\from \SO(1,n-1)\to\SO(1,n)$ is the inclusion and $\alpha^{n-1}_{\Id,k}$ is computed from the vertex Gram matrix obtained by deleting the last row and column of $G$. That is, we reduce the case to one dimension lower by regrading face $n$ of $\Delta^{n}$ as $n-1$-simplex $\Delta^{n-1}$. This eventually reduces it to the case $\overline{\Delta}^2$.
\item $\alpha^2_{\Id, 1}$ is the $\SO(1,2)$-matrix acting on the horocycle $\partial B(\vec{v}^0)=\partial B((1,-1,0)$ as Euclidean translation by distance $d=d_{\partial B(\vec{v}^0)}(\vec{v}^1,\vec{v}^2)$. We can evaluate $d$ using the entries of $G$ and the respective expression in Section~\ref{sec:hypDistances}.
\item $\alpha^2_{\Id, 0}\in \SO(1,2)$-matrix is the swapping $\vec{v}^0=(1,-1,0)$ with the ideal vertex $\vec{v}^1\in\R_+^{1,1}$ determined by the off-diagonal entry $\vec{v}^0\cdot\vec{v}^1$ of $G$.
\end{itemize}

\subsection{Hyperbolic structure for a triangulation} \label{sec:hypStructHigher}

Let $\myTrig=\cup T_t$ be an oriented $n$-triangulation. We assume that no face of $\myTrig$ is identified with itself under a non-identity permutation. This is a necessary condition for the geometric realization of $\myTrig$ to be a manifold.
\begin{example}
This condition is violated if an edge is identified with itself in reverse in a (necessarily non-orientable) $3$-triangulation. The link of the corresponding edge midpoint in the geometric realization is a projective plane.
\end{example}

To each simplex $T_t$, assign a vertex Gram matrix $G_t$ corresponding to a generalized hyperbolic simplex. Such vertex Gram matrices are characterized by a generalized version of \cite[Theorem~1.5]{heardThesis}. Assume that the $G_t$ are compatible, that is that the values assigned to the vertices and edges of each $T_t$ match under the gluings.

Let $\pentagon_d$ denote a $d$-gon. Analogously to \cite[Section~3]{matthiasVerifyingFinite}, $\hat{\myTrig}$ is obtained from $\overline{\myTrig}$ by adding a prism $\pentagon_d\times \overline{\Delta}^{n-2}$ for each $n-2$-cell $e$ of $\myTrig$ where $d$ is the degree of $e$.
 
 Assume the cocycle on $\overline{\myTrig}$ extends to $\hat{\myTrig}$. This is equivalent to the cocycle condition holding for a $\pentagon_d$ of each prism. Note that this condition is algebraic in the entries of the $G_t$. It is also equivalent to each $\Theta_e$ being a multiple of $2\pi$ where $\Theta_e$ is the sum of dihedral angles adjacent to $e$.
 
The inclusion $\hat{\myTrig}\subset\myTrig\setminus\myTrig_0$ induces a fully faithful functor between the corresponding fundamental groupoids. Thus, we obtain a representation $\rho\from\pi_1(\myTrig\setminus\myTrig_0)\to\SO(1,n)$ and can use Section~\ref{sec:cocycleDev} to obtain a vertex developing map $\Psi\from \tilde{V}\to\H^n$ using $q=(-1,1,0,\dots,0)$.

Assume that each $G_t$ corresponds to an ideal simplex. The $G_t$ give us incomplete a hyperbolic manifold structure on the open $n$-cells. We show how to inductively verify whether this hyperbolic manifold structure extends to the open $n-1$-cells, open $n-2$-cells, \dots, open $1$-cells so that we eventually have a complete hyperbolic manifold structure on $\myTrig\setminus\myTrig_0$.

\begin{remark}
If the $G_t$ correspond to finite simplices, we can extend this process to the $0$-cells to obtain a hyperbolic manifold structure on $\myTrig$. We can also generalize to the hyperideal case.
\end{remark}

Assume that we have verified that the hyperbolic structure extends down to the open $k+1$ cells and we now need to verify that it extends to the $k$-cells. We distinguish between the cases $k=n-1$, $k=n-2$ and $n-2 > k > 0$.

The case $k=n-1$ follows from the compatibility conditions on the $G_t$.

For the case $k=n-2$, we need to verify that each $\Theta_e=2\pi$.

For the other cases, note that the link $L$ of a $k$-cell $c$ has a triangulated spherical $n-k-1$-manifold structure. We need to verify that $L$ is $S^{n-k-1}$ rather than a quotient of $S^{n-k-1}$ for each $k$-cell $c$. This is equivalent to showing that the corresponding representation $\pi_1(L)\to\SO(n-k)$ is trivial.

\begin{example}
A non-orientable 3-triangulation can have a projective plane as vertex link $L$. We obtain a representation $\pi_1(L)\to\myO(3)$ sending the generator of the projective plane to $-\Id$ if we amend the theory to non-orientable triangulations. That is, we change the standard-position to $\vec{v}^{\sigma(n)}\in\R_+^{1,n}$ and glue simplices using the diagonal matrix with diagonal $(1,\dots,1,-\sgn(\sigma_t^f))$.
\end{example}

We can verify that $\pi_1(L)\to\SO(n-k)$ is trivial from the cocycle. Order the vertices of $c$. An \emphasisText{embedding} of $c$ is a pair $(T_t, \sigma)$ with $\sigma\in S_n$ such that vertex $\sigma(0), \dots, \sigma(k)$ of $T_t$ correspond to the vertices of $c$ in that order (this aligns with \cite[\texttt{FaceEmbedding}]{Regina}). For each embedding $(T_t, \sigma)$ of $c$, take edges $(\sigma, k+1), \dots, (\sigma, n-1)$ of $\overline{T_t}$. The $\SO(1,n)$-matrices assigned to these edges fix $\H^k\subset\H^n$ point-wise and thus can be thought of as an $\SO(n-k)$-matrix. The edges form a connected complex. We need to verify that we get a trivial matrix for each loop in this complex. It is sufficient to check this for a generating set for loops. We can find such a generating set by picking a rooted spanning tree. That is, for each edge in the complement of the spanning tree, we obtain a loop by connecting the ends of the edge to the root through the spanning tree.

Note that the complex obtained for a subcell of $c$ contains the complex for the cell $c$, so some of these tests are redundant.

\begin{remark}
Given just a triangulation in higher dimensions, it is undecidable whether its geometric realization is a manifold. However, given the the geometric structure encoded in the vertex Gram matrices $G_t$, the above tests can prove that $\myTrig\setminus\myTrig_0$ is a topological manifold (which admits a hyperbolic structure). 
\end{remark}

\subsection{Developed fundamental polyhedron} \label{sec:higherFundPoly}

We are reusing the notation from Section~\ref{sec:trigCharts}. We can use Section~\ref{sec:cocycleDev} to obtain a developed fundamental polyhedron $P$ with vertices $\vec{v}^v_t$ from a triangulation $\myTrig$ with a geometric structure. Assume $P$ was developed such that simplex $T_0$ is in $\Id$-standard position. Then, outward-facing normal of face $n$ of $T_0$ is simply given by $(0,\dots,0,-1)\in\R^{1,n}$. We obtain the outward-facing normal $\vec{n}^f_t$ of any face $f$ of any simplex $T_t\subset P$ by applying the matrices from the cocycle on on $\overline{\myTrig}$.

To find the incenter $x\in\H^n$ of $T_0$, we solve the linear system of equations $x\cdot \vec{n}^f_t=-1$ and then rescale $x$. The rescaling factor also gives us the inradius $r=d(x, \vec{n}^f_t)$. We apply a transform to $P$ sending $x$ to $(1,0,\dots,0)$. This gives us the input $P$ to Algorithm~\ref{algorithm:epAlgorithm}.

\subsection{Cusp neighborhoods}

Let $\myTrig$ be a triangulation with an ideal geometric structure as described in Section~\ref{sec:hypStructHigher}. The vertex Gram matrices $G_t$ also determine horoballs and thus cusp neighborhoods $C_j$. The Euclidean edge lengths of the corresponding cusp cross sections can be computed by evaluating the expression for $d_{\partial B(\vec{v}^v_t)}(\vec{v}^{v'}_t,\vec{v}^{v''}_t)$ in Section~\ref{sec:hypDistances} using the entries in $G_t$. We can use the Cayley-Menger determinant to compute the volume of each Euclidean simplex in a cusp cross section from the edge lengths. Adding up these volumes, we obtain the volume of each $\partial C_j$.

We can now perform Step~\ref{step:scaleVert} of Algorithm~\ref{algorithm:epAlgorithm} to compute the Epstein-Penner decomposition. For Step~\ref{step:scaleToStd}, the scaling factor for one Euclidean simplex in the cusp cross section is given by $(\vec{v}^v_t\cdot \vec{n}^v_t)^{-1}$.

\subsection{Distances}

Step~\ref{step:tileEP} of Algorithm~\ref{algorithm:epAlgorithm} invokes Algorithm~\ref{algorithm:tilingAny} to tile about a point $K=x\in\H^n$. This requires computing $d(K, mT^f_t)=d(x,mT^f_t)=d(m^{-1}x,T^f_t)$. That is, we need to compute the distance $d(p, T^f_t)$ of a point $p$ to face $f$ of the $n$-simplex $T_t\subset P$. We more generally compute the distance $d(p, c)$ to any $k$-cell $c$ of $T_t$ with $k=n-1,\dots,1$ using the following recursive procedure.

With the same assumption as in Section~\ref{sec:higherFundPoly}, note that the interior of the cell $c$ spanned by $\vec{v}_0^0,\dots, \vec{v}_0^{k}$ is contained in $\H^k\cap \R^{1,k}_+\subset \H^n$ for $k=n,\dots,1$. Regarding $c$ as a simplex in $\H^k$, it is bounded to one side by $\H^{k-1}$ containing the adjacent cell $c'$ spanned by $\vec{v}_0^0,\dots, \vec{v}_0^{k-1}$ for $k=n,\dots,2$.

More generally, let $(c,c')$ be a pair of a $k$-cell and adjacent $k-1$-cell of simplex $T_t$ with $k=n,\dots,2$. We denote by $\alpha_t^{c,c'}$ a transform taking the interior of $c$ into $\H^k\cap \R^{1,k}$ and $c'$ into $\H^{k-1}$. We can again use the cocycle on $\overline{\myTrig}$ to compute $\alpha_t^{c,c'}$.

Similarly, we denote by $\alpha_t^c$ a transformation taking a $k$-cell $c$ of simplex $T_t$ with $k=n,\dots, 1$ into $\H^k$.

We use $(\_)_k$ for the $k$-th coordinate, that is $(x_0,\dots,x_n)_k=x_k$. We also use the projection
\[
\pi_k\from\H^n\to\H^n, (x_0,\dots, x_n)\mapsto (0,\dots,0, x_{k+1},\dots,x_n).
\]

Let $p\in\H^n$ and let $c$ be a $k$-cell of $T_t$ with $k=n-1,\dots,1$. If $k\geq 2$ and there is an adjacent $k-1$-cell $c'$ with $(\alpha_t^{c,c'} p)_k \leq 0$, then $d(p,c)=d(p,c')$. Otherwise,
\[
d(p,c)=\sinh^{-1} \left(\pi_k(\alpha_t^c p) \cdot \pi_k(\alpha_t^c p)\right).
\]

\begin{remark}
For finite simplices, the procedure needs to be extended by letting $k$ go down one more dimension.
\end{remark}

\clearpage

\appendix
\section{Notation}
\label{App:Notation}
For the convenience of the reader, we list some of the notation used in the paper.  

\newcolumntype{L}[1]{>{\raggedright\let\newline\\\arraybackslash\hspace{0pt}}p{#1}}
\newcolumntype{C}[1]{>{\centering\let\newline\\\arraybackslash\hspace{0pt}}p{#1}}
\newcolumntype{R}[1]{>{\raggedleft\let\newline\\\arraybackslash\hspace{0pt}}p{#1}}

\begin{longtable}{L{0.15\textwidth}L{0.79\textwidth}}%
\renewcommand{\arraystretch}{1.4}%
$\myTrig$ & 3-dimensional triangulation.\\
$T_t$ & Tetrahedron of $\myTrig$ --- abusing notation, also the embedding in $\H^3$.\\
$M$ & Hyperbolic manifold and/or Manifold that is the geometric realization of $\myTrig$.\\
$C_i$ & Cusp neighborhood about cusp $i$.\\
$A(C_i)$ & Area of $C_i$.\\
$A_M=(A_{ij})$ & Maximal cusp area matrix for $M$; see Definition~\ref{def:maxCuspAreaMatrix}.\\
$\Gamma$ & Subset of $\SO(1,3)$ such that $M\cong \Gamma\backslash\H^3$ (or $M_\text{filled}\cong\Gamma\backslash\H^3$ if $M$ is incomplete).\\
$m$ & Element in $\Gamma$.\\
$a=[\underline{a},\overline{a}]$ & Notation for interval; see Section~\ref{sec:intConv}.\\
$x\cdot x'$ & Inner product of signature $(-,+,+,+)$; see Section~\ref{sec:hyperboloidModel}.\\
$\H^3$ & Hyperboloid model of hyperbolic 3-space.\\
$\posLightCone$ & Future light-cone.\\
$B(l)$ & horoball about $l\in\posLightCone$.\\
$\widehat{x}$ or $(x)\postNorm$ & Projection onto $\H^3$.\\
$\hypPlane{n}$ & Plane with normal $n\in\R^{1,3}$.\\
$x_0x_1$ & Line with endpoints $x_0, x_1\in\posLightCone$.\\
$d(\_, \_)$ & Unsigned or signed distance in $\H^3$; see  Section~\ref{sec:hypDistances}.\\
$d_{x_0x_1}(x,x')$ & Signed distance when projected onto line $x_0x_1$ in $\H^3$.\\
$d_{\partial B(l)}(x, x')$ & Distance when projected onto horoball $B(l)$.\\
$T$ & Ideal triangle in $\H^3$.\\
$t, t'$ & Index of tetrahedron in $\myTrig$; see Section~\ref{sec:triangulations}.\\
$v, v'$ & Index of vertex in a tetrahedron.\\
$f, f'$ & Index of face in a tetrahedron.\\
$n^f_t$ & Index of tetrahedron neighboring $T_t$ across face $f$.\\
$\sigma^f_t$ & Face-gluing permutation for $T_t$ and face $f$.\\
$M_\text{filled}$ & If $M$ is incomplete, $M_\text{filled}$ is the complete manifold obtained by attaching circles. $M\to M_\text{filled}$ with equality if $M$ is complete; see Section~\ref{sec:geometricStruct}.\\
$z_t$ & Cross ratio/shape of $T_t$.\\
%$A(z)$ & Maximal area of cusp triangle for standard form; see Lemma~\ref{lemma:maxAreaStandardForm}.\\
$\vec{v}_t^v$ & Vertices of tetrahedron $T_t$ in $\H^3\cup\posLightCone$; see Section~\ref{sec:trigCharts}.\\
$\vec{n}_t^f$ & Outward-facing normals for faces of $T_t$.\\
$T_t^f$ & Triangle making up face $f$ of $T_t$.\\
$g_t^f$ & Face-pairing matrix for $T_t$ and face $f$.\\
$P$ & Developed fundamental polyhedron; see Section~\ref{sec:devFundPoly}.\\
$\widetilde{V}$ & Vertices of universal cover $\widetilde{\myTrig}$; see Section~\ref{sec:cocycleDev}.\\
$\Psi$ & Vertex developing map $\widetilde{V}\to\overline{\H}^3$.\\
$mT_t$ & Lifted tetrahedron with $m\in\Gamma$; see Section~\ref{sec:algoInput}.\\
$K$ & Standard geometric object: a point $x\in\H^3$, line $x_0x_1$ with $x_0, x_1\in\posLightCone$ or horosphere $B(l)$ in $\posLightCone$; see Definition~\ref{def:standardObject}.\\
$B_r(K)$ & $r$-neighborhood of $K$, in $\H^3$, $T_t$ or $M$.\\
$\overline{B}_r(K)$ & Closed $r$-neighborhood of $K$, in $\H^3$, $T_t$ or $M$.\\
$(r_i,m_iT_{t_i})$ & (Object view) Stream of tiling radii and lifted tetrahedra; see Section~\ref{sec:tilePoint}. Output of tiling algorithm (Algorithm~\ref{algorithm:tilingAny}).\\
$(r_i, (m^{-1}_i K,T_{t_i}))$ & (Tetrahedra view) Stream of tiling radii and lifted objects; see Section~\ref{sec:tilingGeneralCase}.\\
$Q$ & Priority queue in Algorithm~\ref{algorithm:tilingAny}; see Section~\ref{sec:tilingAlgoDetailSec}.\\
$S$ & Set of lifted tetrahedra in $\Gamma_K\backslash\H^3$ in Algorithm~\ref{algorithm:tilingAny}; implemented in Section~\ref{sec:setLiftedTets}.\\
$D\from X\rightharpoonup Y$ & Dictionary; see Section~\ref{sec:dict}.\\
%D_{\Eq,h}$ & Hash dictionary with equality predicate $\Eq$ and hash function $h$; see Section~\ref{sec:hashDict}.\\
%$D_\R$ & Associative data structure for intervals. Given an interval $I'$, $D_\R$ supports efficiently looking up all pairs $(I,v)$ with $I$ overlapping $I'$; see Section~\ref{sec:intervalDict}.\\
$\Eq_b$ & Equality predicate for discrete subset of $\H^3\cup\posLightCone$; see Section~\ref{sec:hyperbolicDict}.\\
$D_{\H^3\cup\posLightCone, b}$ & Dictionary with keys being in a discrete subset $X$ of $\H^3\cup\posLightCone$. $b$ is lower bound for $-x\cdot y$ with distinct $x,y\in X$. See Section~\ref{sec:hyperbolicDict}.\\
$p,r$ & Incenter and inradius of a tetrahedron $T_t$; see Section~\ref{sec:gammaDict}.\\
$d_M(K,K')$ & Distance between $K$ and $K'$ in $M$. Twice the embedding size of $K$ if $K$ and $K'$ are the same. See Section~\ref{sec:distStandard}.\\
$r\big[K\big]_i$ & Tiling radius for $K$ after $i$ steps of tiling algorithm.\\
$\Tiles\big[K\big]_{i,t}$ & Tiles for $K$ in tetrahedra view for tetrahedron $T_t$.\\
%$d^{i,i'}_t(K,K')$ & Distance of tiles for tetrahedron $T_t$.\\
$d_{i,i'}(K,K')$ & Distance between tiles for $K$ and $K'$ after $i$ and $i'$ steps of the tiling algorithm.\\
$s_i$ & Cusp shape; see Section~\ref{sec:peripheralConventions}.\\
%$l_i$ & Length of slope measured along boundary of cusp neighborhood; see Section~\ref{sec:conditionsSix}.\\
%$a_i$ & Minimum value for cusp area such that given slope has length longer than $6$.\\
$\Delta^n$ & $n$-simplex. See Section~\ref{sec:natCocycleGram}.\\
$\overline{\Delta}^n$ & Permutahedron of order $n+1$ (which is a $n-1$-times truncated $n$-simplex).\\
$\sigma$ & Permutation of $\{0,\dots,n\}$ in $S_{n+1}$.\\
$(\sigma, k)$ & Edge label of $\overline{\Delta}^n$.\\
$\vec{v}^v$ & Vertex $v$ of simplex $\Delta^n$ in $\H^n\cup \mathbb{L}_{+}^n$.\\
$G$, $G*$ & Vertex Gram matrix and normal Gram matrix.\\
$\H^n$ & Hyperbolic $n$-space.\\
$\R^{1,n}$ & Minkowski space of signature $(-,+,\dots,+)$.\\
$\R^{1,n}_+, \R^{1,n}_-$ & Decomposition of $\R^{1,n}$.\\
$G_t$ & Vertex Gram matrix for simplex $T_t$; see Section~\ref{sec:hypStructHigher}.\\
$\overline{\myTrig}$ & Cell-complex obtained by replacing each $n$-simplex of $\myTrig$ by $\overline{\Delta}^n$; also see \cite[Section~3]{matthiasVerifyingFinite}.\\
\end{longtable}

\bibliographystyle{hamsalphaMatthias}
%\bibliographystyle{alphaNoUrl}
% Used by the bib style file when formatting DOI and arXiv identifiers.
\renewcommand{\path}[1]{#1}

% Check does it do URL???
%\bibliographystyle{alphaNoUrl} % From Cohomology fractal
\bibliography{embeddingSizes}

@article {matthiasVerifyingFinite,
    AUTHOR = {Goerner, Matthias},
     TITLE = {Verified computations for closed hyperbolic 3-manifolds},
   JOURNAL = {Bull. Lond. Math. Soc.},
  FJOURNAL = {Bulletin of the London Mathematical Society},
    VOLUME = {53},
      YEAR = {2021},
    NUMBER = {2},
     PAGES = {596--618},
      ISSN = {0024-6093},
   MRCLASS = {57K32 (57Q15 65G20)},
  MRNUMBER = {4239199},
       DOI = {10.1112/blms.12445},
      eprint = {1904.12095},
    archivePrefix = { arXiv }
}

@Article{hikmot,
  author   = {Hoffman, N. and Ichihara, K. and Kashiwagi, M. and Masai, H. and Oishi, S. and Takayasu, A.},
  title    = {Verified computations for hyperbolic 3-manifolds},
  journal  = {Experiment. Math.},
  year     = {2016},
  volume   = {25},
  number   = {1},
  pages    = {66--78},
  note     = {\url{http://www.oishi.info.waseda.ac.jp/~takayasu/hikmot/}},
  eprint   = {1310.3410},
    archivePrefix = { arXiv },
  fjournal = {Experimental Mathematics},
  mrnumber = {3424833},
  zbl      = {1337.57044},
}

@misc{SnapPy,
     author={Culler, Marc and Dunfield, Nathan M. and Goerner,
     Matthias and Weeks, Jeffrey R.},
     title={Snap{P}y, a computer program for studying the geometry and topology of $3$-manifolds},
     howpublished={available at \url{http://snappy.computop.org/} ({V}ersion 3.2)},
     year={2025},
}

@misc{sagemath,
  Key          = {SageMath},
  Author       = {The {Sage Developers}},
  Title        = {{S}ageMath, the {S}age {M}athematics {S}oftware {S}ystem ({V}ersion 10.5)},
  note         = {\url{https://www.sagemath.org}},
  Year         = {2025},
}

@phdthesis {heardThesis,
    AUTHOR = {Heard, Damian},
     TITLE = {Computation of hyperbolic structures on 3-dimensional orbifolds},
      YEAR = {2005},
    SCHOOL = {University of Melbourne},
      NOTE = {\url{https://github.com/DamianHeard/orb-thesis/}}
}

@misc {casson:geo,
    author = {Casson, Andrew},
     TITLE = {Geo, a program for geometrizing 3-manifolds},
      NOTE = {\url{http://computop.org/}}
}

@Article{hwcensus,
    Author = {Craig D. {Hodgson} and Jeffrey R. {Weeks}},
    Title = {{Symmetries, isometries and length spectra of closed hyperbolic three-manifolds}},
    FJournal = {{Experimental Mathematics}},
    Journal = {{Experiment. Math.}},
    ISSN = {1058-6458; 1944-950X/e},
    Volume = {3},
    Number = {4},
    Pages = {261--274},
    Year = {1994},
    Publisher = {Taylor \& Francis, Philadelphia, PA},
    DOI = {10.1080/10586458.1994.10504296},
    MSC2010 = {57M50 57N10 57-04},
    Zbl = {0841.57020}
}

@misc{ThurstonNotes,
AUTHOR = {Thurston, William P.},
TITLE = {The Geometry and Topology of Three-Manifolds},
YEAR = {1980},
NOTE = {Princeton lecture notes, available at \url{https://library.slmath.org/nonmsri/gt3m/}}
 }

@Article{moser,
    Author = {Harriet {Moser}},
    Title = {{Proving a manifold to be hyperbolic once it has been approximated to be so}},
    FJournal = {{Algebraic \& Geometric Topology}},
    Journal = {{Algebr. Geom. Topol.}},
    ISSN = {1472-2747; 1472-2739/e},
    Volume = {9},
    Number = {1},
    Pages = {103--133},
    Year = {2009},
    Publisher = {Mathematical Sciences Publishers (MSP), Berkeley, CA; Geometry \& Topology Publications c/o University of Warwick, Mathematics Institute, Coventry},
    Language = {English},
    DOI = {10.2140/agt.2009.9.103},
    MSC2010 = {57M50 57N16 51H20 54E50 57-04 53A35},
    Zbl = {1170.57015},
    eprint = {0809.1203},
    archivePrefix = { arXiv }
}

@book {ratcliffe:hyp,
    AUTHOR = {Ratcliffe, John G.},
     TITLE = {Foundations of hyperbolic manifolds},
    SERIES = {Graduate Texts in Mathematics},
    VOLUME = {149},
 PUBLISHER = {Springer-Verlag},
   ADDRESS = {New York},
      YEAR = {1994},
     PAGES = {xii+747},
      ISBN = {0-387-94348-X},
   MRCLASS = {57M50 (20H10 30F40 51M10)},
  MRNUMBER = {1299730 (95j:57011)},
MRREVIEWER = {Colin C. Adams},
}

@article{PetronioWeeks:partiallyFlatTrig,
    author = "Petronio, Carlo and Weeks, Jeffrey R.",
  fjournal = "Osaka Journal of Mathematics",
   journal = "Osaka J. Math.",
    number = "2",
     pages = "453--466",
 publisher = "Osaka University and Osaka City University, Departments of Mathematics",
     title = "Partially flat ideal triangulations of cusped hyperbolic 3-manifolds",
       url = "http://projecteuclid.org/euclid.ojm/1200789209",
    volume = "37",
      year = "2000",
     Zbl = {0952.57003},
   MRNUMBER = {1772844 (2001g:57032)}
}

@misc{Regina,
    author = {Benjamin A. Burton and Ryan Budney and William Pettersson and others},
    title = {Regina: Software for low-dimensional topology},
    howpublished = {available at \url{https://regina-normal.github.io/} ({V}ersion 7.3)},
    year = {2023}
}

@article{cohomologyFractals,
    author = {Bachmann, David and Goerner, Matthias and Schleimer, Saul and Segerman, Henry},
    title = {Cohomology fractals, {C}annon-{T}hurston maps, and the geodesic flow},
    journal = {Experiment. Math.},
    fjournal = {Experimental Mathematics},
    volume = {31},
    year={2022},
    number={4},
    pages={1047--1085},
    doi = {10.1080/10586458.2021.1994059},
    eprint = {2010.05840v2},
    archivePrefix = { arXiv }
}

@article { GGZ:GluingEquations,
         author = { Garoufalidis, Stavros and Goerner, Matthias and Zickert, Christian K. },
          title = { Gluing equations for {$\rm{PGL}(n,\mathbb C)$}-representations of 3-manifolds },
        journal = { Algebr. Geom. Topol. },
       fJournal = { Algebraic \& Geometric Topology },
         volume = { 15 },
           year = { 2015 },
         number = { 1 },
          pages = { 565--622 },
         eprint = { 1207.6711 },
  archivePrefix = { arXiv },
            Zbl = { 1347.57014 }
}

@article { GGZ:PtolemyField,
         author = { Garoufalidis, Stavros and Goerner, Matthias and Zickert, Christian K. },
          title = { The {P}tolemy field of 3-manifold representations },
        journal = { Algebr. Geom. Topol. },
       fJournal = { Algebraic \& Geometric Topology },
         volume = { 15 },
           year = { 2015 },
         number = { 1 },
          pages = { 371--397 },
         eprint = { 1401.5542 }, 
  archivePrefix = { arXiv },
            Zbl = { 1322.57018 },
}

@Article{marche:CS,
 Author = {Julien {March\'e}},
 Title = {{Geometric interpretation of simplicial formulas for the Chern-Simons invariant}},
 FJournal = {{Algebraic \& Geometric Topology}},
 Journal = {{Algebr. Geom. Topol.}},
 ISSN = {1472-2747; 1472-2739/e},
 Volume = {12},
 Number = {2},
 Pages = {805--827},
 Year = {2012},
 Publisher = {Mathematical Sciences Publishers (MSP), Berkeley, CA; Geometry \& Topology Publications c/o University of Warwick, Mathematics Institute, Coventry},
 Language = {English},
 MSC2010 = {57M27 58J28},
 Zbl = {1251.57015},
  eprint = { 1011.3139},
    archivePrefix = { arXiv }
}

@article{GTZ:ptolemyCoords,
    author = {Stavros {Garoufalidis} and Dylan Thurston and Christian {Zickert}},
     title = {The complex volume of $\mathrm{SL}(n,\mathbb{C})$-representations of 3-manifolds},
    fjournal = {Duke Mathematical Journal},
     journal = {Duke Math. J.},
     year = {2015},
     number = {11},
     volume={164},
     pages = {2099--2160},
     eprint = {1111.2828},
     archivePrefix = "arXiv"
}

@article{zickert:volume,
  author = {Christian Zickert},
   title = {The volume and Chern-Simons invariant of a representation},
    fjournal = {Duke Mathematical Journal},
     journal = {Duke Math. J.},
     year = {2009},
     number = {3},
     volume={150},
     pages={489--532},
     eprint={0710.2049},
     archivePrefix = "arXiv"
}

@article {Agol:Six,
    AUTHOR = {Agol, Ian},
     TITLE = {Bounds on exceptional {D}ehn filling},
   JOURNAL = {Geom. Topol.},
  FJOURNAL = {Geometry and Topology},
    VOLUME = {4},
      YEAR = {2000},
     PAGES = {431--449},
      ISSN = {1465-3060},
   MRCLASS = {57M50 (57M25 57M27 57S25)},
  MRNUMBER = {1799796 (2001j:57019)},
MRREVIEWER = {Danny C. Calegari},
       DOI = {10.2140/gt.2000.4.431},
       URL = {http://dx.doi.org/10.2140/gt.2000.4.431},
   EPRINT = {math/9906183}
}

@article {Lackenby:Six,
    AUTHOR = {Lackenby, Marc},
     TITLE = {Word hyperbolic {D}ehn surgery},
   JOURNAL = {Invent. Math.},
  FJOURNAL = {Inventiones Mathematicae},
    VOLUME = {140},
      YEAR = {2000},
    NUMBER = {2},
     PAGES = {243--282},
      ISSN = {0020-9910},
     CODEN = {INVMBH},
   MRCLASS = {57M07 (20F65 20F67 57M05 57N10)},
  MRNUMBER = {1756996 (2001m:57003)},
MRREVIEWER = {William H. Jaco},
       DOI = {10.1007/s002220000047},
       URL = {http://dx.doi.org/10.1007/s002220000047},
    EPRINT = {math/9808120},
 archivePrefix={arXiv}
}

@Article{fiveChainLinkSlopes,
 Author = {Bruno {Martelli} and Carlo {Petronio} and Fionntan {Roukema}},
 Title = {{Exceptional Dehn surgery on the minimally twisted five-chain link}},
 FJournal = {{Communications in Analysis and Geometry}},
 Journal = {{Commun. Anal. Geom.}},
 ISSN = {1019-8385; 1944-9992/e},
 Volume = {22},
 Number = {4},
 Pages = {689--735},
 Year = {2014},
 Publisher = {International Press of Boston, Somerville, MA},
 Language = {English},
 MSC2010 = {57M25 57M50},
 Zbl = {1307.57009},
 eprint={1109.0903},
 archivePrefix={arXiv}
}

@Article{altKnotSlopes,
 Author = {Kazuhiro {Ichihara} and Hidetoshi {Masai}},
 Title = {{Exceptional surgeries on alternating knots}},
 FJournal = {{Communications in Analysis and Geometry}},
 Journal = {{Commun. Anal. Geom.}},
 ISSN = {1019-8385; 1944-9992/e},
 Volume = {24},
 Number = {2},
 Pages = {337--377},
 Year = {2016},
 Publisher = {International Press of Boston, Somerville, MA},
 Language = {English},
 MSC2010 = {57M50 57M25},
 Zbl = {1350.57020},
 eprint={1310.3472},
 archivePrefix={arXiv}
}

@article{threeChainLinkSlopes,
     Author = {Bruno {Martelli} and Carlo {Petronio}},
 Title = {{Dehn filling of the ``magic'' 3-manifold}},
 FJournal = {{Communications in Analysis and Geometry}},
 Journal = {{Commun. Anal. Geom.}},
 ISSN = {1019-8385; 1944-9992/e},
 Volume = {14},
 Number = {5},
 Pages = {969--1026},
 Year = {2006},
 Publisher = {International Press of Boston, Somerville, MA},
 Language = {English},
 MSC2010 = {57M50 57M25 57N10 57R15},
 eprint={math/0204228},
 archivePrefix={arXiv},
 Zbl = {1118.57018}
}

@InCollection{excepSlopeCensus,
 Author = {Nathan M. {Dunfield}},
 Title = {{A census of exceptional Dehn fillings}},
 BookTitle = {{Characters in low-dimensional topology. A conference celebrating the work of Steven Boyer, Universit\'e du Qu\'ebec \`a Montr\'eal, Montr\'eal, Qu\'ebec, Canada, June 2--6, 2018}},
 ISBN = {978-1-4704-5209-4/pbk; 978-1-4704-6135-5/ebook},
 Pages = {143--155},
 Year = {2020},
 Publisher = {Providence, RI: American Mathematical Society (AMS); Montreal: Centre de Recherches Math\'ematiques (CRM)},
 Language = {English},
 MSC2010 = {57K35 57K32},
  eprint = {1812.11940},
 archivePrefix={arXiv},
    Zbl = {07316044}
}

@article{sevenChainLinkSlopes,
 Author = {Bruno {Martelli}},
 Title = {{D}ehn surgery on the minimally twisted seven-chain link},
 Year = {2021},
 fjournal = {Communications in Analysis and Geometry},
 journal = {Commun. Anal. Geom.},
 volume = {29},
 number = {7},
 pages = {1597--1641},
 eprint={1808.08430},
 archivePrefix={arXiv}
}

@article {perel1,
  author = {Grisha Perelman},
  title = {The entropy formula for the {R}icci flow and its geometric applications},
  year = {2002},
  eprint={math.DG/0211159},
  archivePrefix={arXiv}
}

@article {perel2,
  author = {Grisha Perelman},
  title = {{R}icci flow with surgery on three-manifolds},
  year = {2003},
  eprint ={math.DG/0303109},
  archivePrefix={arXiv}
}

@article {EP,
    AUTHOR = {Epstein, D. B. A. and Penner, R. C.},
     TITLE = {Euclidean decompositions of noncompact hyperbolic manifolds},
   JOURNAL = {J. Differential Geom.},
  FJOURNAL = {Journal of Differential Geometry},
    VOLUME = {27},
      YEAR = {1988},
    NUMBER = {1},
     PAGES = {67--80},
      ISSN = {0022-040X},
     CODEN = {JDGEAS},
   MRCLASS = {57N15 (20H10 22E40 51M10)},
  MRNUMBER = {918457 (89a:57020)},
MRREVIEWER = {N. V. Ivanov},
       URL = {http://projecteuclid.org/euclid.jdg/1214441650},
    Zbl = {0611.53036}
}

@book {PurcellKnotTheory,
      title={Hyperbolic knot theory ({G}raduate {S}tudies in {M}athematics)}, 
      author={Jessica S. Purcell},
      year={2020},
      PUBLISHER = {American Mathematical Society, Providence, RI},
      eprint={2002.12652},
    archivePrefix = { arXiv }
}

@book {BenedettiPetronio,
    AUTHOR = {Benedetti, Riccardo and Petronio, Carlo},
     TITLE = {Lectures on hyperbolic geometry},
    SERIES = {Universitext},
 PUBLISHER = {Springer-Verlag},
   ADDRESS = {Berlin},
      YEAR = {1992},
     PAGES = {xiv+330},
      ISBN = {3-540-55534-X},
   MRCLASS = {57M50 (30F40 30F60 51M10 57N10)},
  MRNUMBER = {MR1219310 (94e:57015)},
MRREVIEWER = {Colin C. Adams},
}

@InCollection{burton:encode,
    Author = {Benjamin A. {Burton}},
    Title = {{The Pachner graph and the simplification of 3-sphere triangulations.}},
    BookTitle = {{Proceedings of the 27th annual symposium on computational geometry, SoCG 2011, Paris, France, June 13--15, 2011}},
    ISBN = {978-1-4503-0682-9},
    Pages = {153--162},
    Year = {2011},
    Publisher = {New York, NY: Association for Computing Machinery (ACM)},
    Language = {English},
    DOI = {10.1145/1998196.1998220},
    MSC2010 = {05C10 05C85 68U05 57N10 32B25},
    Zbl = {1283.05065},
   MRCLASS = {68U05 (05C38 05C85 57Q15)},
  MRNUMBER = {2919606},
  EPRINT   = {1110.6080},
    archivePrefix = { arXiv }
}

@article{heapsort,
   Author = { Williams, John William Joseph},
   year = { 1964},
   title = {Algorithm 232: {H}eapsort},
   journal = {Communications of the ACM},
   volume = {7},
   number = {6},
   pages = {347--348}
}

@INPROCEEDINGS{redBlack,
  author={Guibas, Leo J. and Sedgewick, Robert},
  booktitle={19th Annual Symposium on Foundations of Computer Science (sfcs 1978)}, 
  title={A dichromatic framework for balanced trees}, 
  year={1978},
  volume={},
  number={},
  pages={8--21},
  keywords={Computer science;Petroleum;Particle measurements;Algorithm design and analysis;Performance analysis},
  doi={10.1109/SFCS.1978.3}}

@article{ghht:lenSpec,
         Author = {Goerner, Matthias and Haraway, Robert C. III and Hoffman, Neil R. and Trnkova, Maria},
         Title = {Verified Length Spectrum and {M}argulis number for Hyperbolic 3-Manifolds},
         Year = {2026},
         note = {in preparation}}

@article{goerner:drilling,
    Author = {Goerner, Matthias},
    Title = {Drilling Geodesics in Hyperbolic 3-Manifolds and the Closed Isometry Signature},
    Year = {2026},
    note = {in preparation}}

@Article{weeks:canonical,
 Author = {Weeks, Jeffrey R.},
 Title = {Convex hulls and isometries of cusped hyperbolic 3-manifolds},
 Journal = {Topology and its Applications},
 ISSN = {0166-8641},
 Volume = {52},
 Number = {2},
 Pages = {127--149},
 Year = {1993},
 DOI = {10.1016/0166-8641(93)90032-9},
 Keywords = {57M50},
 zbMATH = {446795},
 Zbl = {0808.57005}
}

@article{sakumaWeeks:tilt,
 Author = {Sakuma, Makoto and Weeks, Jeffrey R.},
 Title = {The generalized tilt formula},
 FJournal = {Geometriae Dedicata},
 Journal = {Geom. Dedicata},
 ISSN = {0046-5755},
 Volume = {55},
 Number = {2},
 Pages = {115--123},
 Year = {1995},
 DOI = {10.1007/BF01264924},
 Keywords = {57M50,57S30},
 zbMATH = {806223},
 Zbl = {0834.57009}
}

@article{adams:cuspDensity,
Author = {Adams, Colin and Kaplan-Kelly, Rose and Moore, Michael and Shapiro, Brandon and Sridhar, Shruthi and Wakefield, Joshua},
 Title = {Densities of hyperbolic cusp invariants of knots and links},
 FJournal = {Proceedings of the American Mathematical Society},
 Journal = {Proc. Am. Math. Soc.},
 ISSN = {0002-9939},
 Volume = {146},
 Number = {9},
 Pages = {4073--4089},
 Year = {2018},
 Language = {English},
 DOI = {10.1090/proc/14068},
 Keywords = {57M50,57M25},
 zbMATH = {6904501},
 Zbl = {1397.57033},
 eprint={1701.03479}
}

@InCollection{Ushijima:volFinite,
 Author = {Ushijima, Akira},
 Title = {A volume formula for generalised hyperbolic tetrahedra},
 BookTitle = {Non-Euclidean geometries. J\'anos Bolyai memorial volume. Papers from the international conference on hyperbolic geometry, Budapest, Hungary, July 6--12, 2002},
 ISBN = {0-387-29554-2; 0-387-29555-0},
 Pages = {249--265},
 Year = {2006},
 Publisher = {New York, NY: Springer},
 Language = {English},
 Keywords = {52A38,51M09},
 zbMATH = {5046181},
 Zbl = {1096.52006},
 eprint={math/0309216}
}

@Article{quickhull,
 Author = {Barber, C. Bradford and Dobkin, David P. and Huhdanpaa, Hannu},
 Title = {The quickhull algorithm for convex hulls},
 FJournal = {ACM Transactions on Mathematical Software},
 Journal = {ACM Trans. Math. Softw.},
 ISSN = {0098-3500},
 Volume = {22},
 Number = {4},
 Pages = {469--483},
 Year = {1996},
 Language = {English},
 DOI = {10.1145/235815.235821},
 Keywords = {65D18,52B55},
 URL = {www.acm.org/pubs/contents/journals/toms/1996-22/},
 zbMATH = {1101869},
 Zbl = {0884.65145}
}

@misc{FMP:ographs,
    author = {Frigerio, Roberto and Martelli, Bruno and Pertronio, Carlo},
     title = {Ographs, a computer program for computing structures on hyperbolic 3-manifolds with geodesic boundary},
      note = {\url{https://people.dm.unipi.it/petronio/progs.html}}
}

@article { FGGTV:TetrahedralCensus,
         author = { Fominykh, Evgeny and Garoufalidis, Stavros and Goerner, Matthias and Tarkaev, Vladimir and Vesnin, Andrei },
          title = { A census of tetrahedral hyperbolic manifolds },
        journal = { Experiment. Math. },
       fJournal = { Experimental Mathematics },
         volume = { 25 },
           year = { 2016 },
         number = { 4 },
          pages = { 466--481 },
         eprint = { 1502.00383 },
  archivePrefix = { arXiv },
            Zbl = { 1344.57009 },
       MRNUMBER = { 3499710 },
}

@article{fps:margulis,
 Author = { Futer, David and Purcell, Jessica S. and Schleimer, Saul },
 Title = {Effective distance between nested {M}argulis tubes},
 Journal = {Trans. Amer. Math. Soc.},
 FJournal = {Transactions of the American Mathematical Society},
 Volume ={372},
 Number = {6},
 Year = {2019},
 Pages = {4211--4237},
 eprint = {1801.05342}
}

@article{meyerhoff:orthoSpec,
 author = {Meyerhoff, G. Robert},
 title = {The ortho-length spectrum for hyperbolic 3-manifolds},
 fjournal = {The Quarterly Journal of Mathematics. Oxford Second Series},
 journal = {Q. J. Math., Oxf. II. Ser.},
 issn = {0033-5606},
 volume = {47},
 number = {187},
 pages = {349--359},
 year = {1996},
 language = {English},
 keywords = {57M50,57N10},
 zbMATH = {938703},
 Zbl = {0864.57010}
}

\end{document}